\documentclass[sigconf]{acmart}

\newif\ifdraft
\drafttrue
\newif\ifaftersub
\aftersubfalse


 \usepackage{setspace}

\setcopyright{acmcopyright}
\copyrightyear{2023}
\acmYear{2023}
\acmDOI{XXXXXXX.XXXXXXX}

\acmConference[PODC 2023]{ACM Symposium on Principles of Distributed Computing}{June 19--23,
  2023}{Orlando, FL}
\acmPrice{15.00}
\acmISBN{978-1-4503-XXXX-X/18/06}


\usepackage{enumerate,amsthm,amsmath,
latexsym}
\usepackage{bbm}
\usepackage[mathscr]{euscript}
\usepackage{bbm}
\usepackage{natbib}
\usepackage{bbold,dsfont}
\usepackage{listings}

\usepackage{hyperref}
\hypersetup{
    colorlinks,
    linkcolor={cyan!50!black},
    citecolor={green!50!black},
    urlcolor={blue!80!black}
}
\usepackage[nameinlink,capitalise]{cleveref}
\usepackage{framed}
\definecolor{shadecolor}{HTML}{F5F5F5}

\usepackage[utf8]{inputenc}
\usepackage{amsfonts}
\usepackage{tikz}
\usepackage{tkz-euclide}
\usepackage{xspace}
\usetikzlibrary{intersections}
\usetikzlibrary{positioning}
\usetikzlibrary{arrows}
\usetikzlibrary{shapes}
\usetikzlibrary{calc}

\usepackage{todonotes}

\newtheorem{theorem}{Theorem}[section]  

\newtheorem{lemma}[theorem]{Lemma}

\newtheorem{corollary}[theorem]{Corollary}

\newtheorem{proposition}[theorem]{Proposition}

\newtheorem{definition}[theorem]{Definition}


\newcommand{\cfirst}{purple\xspace}

\newcommand{\cthird}{cyan\xspace}

\newcommand{\crwprocess}{Random Walk Process\xspace} 

\newcommand{\dualprocess}{Diffusion Process\xspace}

\newcommand{\indi}[1]{{e^{(#1)}}}

\usepackage{xcolor}
\usepackage{mathtools}
\definecolor{cricolor}{HTML}{ED4863}
\definecolor{first Term}{HTML}{0590ff}
\definecolor{second Term}{HTML}{ff7f50}
\definecolor{third Term}{HTML}{40e0d0}
\definecolor{cfirst}{HTML}{a000a0} 
\definecolor{csecond}{HTML}{FF6D28} 
\definecolor{cthird}{HTML}{00bfc9} 
\newcommand{\Cnote}[1]{\ifdraft{\color{cricolor} Cris: #1}\fi}

\newcommand{\fnote}[1]{
\ifdraft
{\color{teal} F: #1}\fi}

\newcommand{\extend}[1]{{\ifdraft\ifaftersub\color{blue} extend: #1}\fi\fi}

\newcommand{\omitthis}[1]{}

\newcommand{\loc}[1]{\mathbbmss{L}_{\{#1\}}}
\newcommand{\particle}[1]{\mathbbmss{P}_{\{#1\}}}

\usepackage{cancel}
\newcommand{\asterisk}{\stackrel{\mathclap{\normalfont\mbox{\scalebox{.7}{(*)}}}}{=}}

\usepackage{stackengine}
\stackMath


\newcommand{\mone}{Node Model\xspace}
\newcommand{\mtwo}{Edge Model\xspace}

\newcommand{\avgprocess}{Averaging Process\xspace}

\newcommand{\E}{\mathds{E}}
\def \V{\mathds{V}ar}
\newcommand{\var}{\mathds{V}ar}
\newcommand{\Var}{\var}
\newcommand{\Prob}{\mathds{P}}
\newcommand{\R}{\mathbb R}

\def\a{\alpha}

\newcommand\twonorm[1]{\lVert#1\rVert_2^2}

\newcommand{\ind}[1]{\mbox{{\large 1}} _{\{#1\}}}
\newcommand{\one}{\textbf{1}}

\newcommand{\ol}[1]{\overline{#1}}
\newcommand{\ul}[1]{\boldsymbol{#1}}
\def\sm{\! \setminus \!}

\newcommand{\brac}[1]{\left( #1 \right)}

\newcommand{\pbrac}[1]{\langle #1 \rangle}

\def\a{\alpha}

\def\d{d_{\text{min}}}
\def\D{d_{\text{max}}}

\def\l{\lambda}

\def\R{\mathbb R}

\def\x{\xi}

\renewcommand{\epsilon}{\varepsilon}
\def\newChi{\raisebox{2pt}{$\chi$}}

\newcommand{\avg}{\operatorname{Avg}}




\author{P. Berenbrink}
\affiliation{%
  \institution{Universit\"at Hamburg}
  \streetaddress{Mittelweg 177}
  \city{Hamburg}
  \state{Germany}
  \postcode{20148}
}

\author{C. Cooper}
\affiliation{%
  \institution{King's College London}
  \streetaddress{Strand}
  \city{London}
  \state{United Kingdom}
  \postcode{WC2R 2LS}
}

\author{C. Gava}
\affiliation{%
  \institution{King's College London}
  \streetaddress{Strand}
  \city{London}
  \state{United Kingdom}
  \postcode{WC2R 2LS}
}

\author{F. Mallmann-Trenn}
\affiliation{%
  \institution{King's College London}
  \streetaddress{Strand}
  \city{London}
  \state{United Kingdom}
  \postcode{WC2R 2LS}
}

\author{T. Radzik}
\affiliation{%
  \institution{King's College London}
  \streetaddress{Strand}
  \city{London}
  \state{United Kingdom}
  \postcode{WC2R 2LS}
}

\author{D. Kohan Marzag\~{a}o}
\affiliation{%
\institution{University of Oxford}
  \streetaddress{Wellington Square}
  \city{Oxford}
  \state{United Kingdom}
  \postcode{OX1 2JD}
}

\author{N. Rivera}
\affiliation{%
\institution{University of Valpara\'iso}
  \streetaddress{Blanco 951}
  \city{Valpara\'iso}
  \state{Chile}
  \postcode{951}
}


\begin{CCSXML}
<ccs2012>
 <concept>
  <concept_id>10010520.10010553.10010562</concept_id>
  <concept_desc>Computer systems organization~Embedded systems</concept_desc>
  <concept_significance>500</concept_significance>
 </concept>
 <concept>
  <concept_id>10010520.10010575.10010755</concept_id>
  <concept_desc>Computer systems organization~Redundancy</concept_desc>
  <concept_significance>300</concept_significance>
 </concept>
 <concept>
  <concept_id>10010520.10010553.10010554</concept_id>
  <concept_desc>Computer systems organization~Robotics</concept_desc>
  <concept_significance>100</concept_significance>
 </concept>
 <concept>
  <concept_id>10003033.10003083.10003095</concept_id>
  <concept_desc>Networks~Network reliability</concept_desc>
  <concept_significance>100</concept_significance>
 </concept>
</ccs2012>
\end{CCSXML}

\ccsdesc[500]{Theory of computation~Random walks and Markov chains}
\ccsdesc[500]{Theory of computation~Random network models}
\ccsdesc[500]{Mathematics of computing~Markov processes}
\ccsdesc[500]{Theory of computation~Distributed algorithms}

\title{Distributed Averaging in Opinion Dynamics}

\begin{document}

\begin{abstract}
We consider two simple asynchronous opinion dynamics on arbitrary graphs 
where every node $u$ of the graph has an initial value $\xi_u(0)$. 
In the first process, which we call the $\mone$, at each time step $t\ge 0$, 
a random node $u$ and a random sample of $k$ of its neighbours $v_1,v_2,\cdots,v_k$ are selected. 
Then, $u$ updates  its 
current value $\xi_u(t)$ to
$\xi_u(t+1) = \alpha \xi_u(t) + \frac{(1-\alpha)}{k} \sum_{i=1}^k \xi_{v_i}(t)$, where 
$\alpha \in (0,1)$ and $k\ge 1$ are parameters of the process.
In the second process, called the $\mtwo$, 
at each step a random pair of adjacent nodes $(u,v)$ is selected, and then node $u$ updates its value equivalently to the $\mone$ with $k=1$ and $v$ as the selected neighbour. 

For both processes, the values of all nodes converge to the same value $F$,
which is a random variable
depending on the random choices made in each step. 
For the $\mone$ and regular graphs,
and for the $\mtwo$ and arbitrary graphs, the expectation  of $F$ is the average of the initial values 
$\frac{1}{n}\sum_{u\in V} \xi_u(0)$. 
For the $\mone$ and non-regular graphs, the expectation of $F$ is the degree-weighted average of the initial values. 

Our results are two-fold. We consider  the concentration of $F$ and show tight bounds on the variance of $F$ for regular graphs.
We show that when the initial values do not depend on the number of nodes, 
then the variance is negligible, and hence the nodes are able to estimate the initial average of the node values.
Interestingly, this variance does not depend on the graph structure. For the proof we introduce 
a duality between our processes and a 
process of two correlated random walks.
We also analyse the convergence time for both models 
and for arbitrary graphs, showing bounds on the time $T_\varepsilon$
required to make all node values `$\varepsilon$-close' to each other. Our bounds are asymptotically tight under some assumptions
on the distribution of the initial values.
\end{abstract}

\keywords{Distributed Averaging, Dual Processes, Stochastic Processes, Random Walks}

\maketitle

\section{Introduction}
In this paper, we consider two natural  asynchronous opinion dynamics, extending the notion of pull voting 
to averaging of numeric opinions. We refer to these models as the
$\mone$ and the $\mtwo$. 
We are given a connected undirected graph $G=(V,E)$
with $n$ nodes, which we refer to as agents, and $m$ edges. Each agent $u$ has a real number as its initial value at time $0$.
In the $\mone$, at each time step $t\ge 1$,
a node $u$ is chosen uniformly at random. This node observes $k\ge 1$ of its neighbours $v_1,\ldots v_k$,
selected uniformly at random, and updates its value unilaterally 
(i.e., only $u$ updates its value at a this step) to an $\alpha$-fraction of its current value plus a $(1-\alpha)$-fraction of the average of the  values 
of $v_1,\ldots v_k$. 
In the $\mtwo$, a (directed) edge $(u,v)$ is observed uniformly at random and $u$  updates its value unilaterally to an $\alpha$-fraction of its own value plus a $(1-\alpha)$-fraction of the value of $v$. For regular graphs and $k=1$ the $\mtwo$ 
is the same as the $\mone$.
Similarly to the voter model, 
where a randomly selected node takes on the opinion of a random neighbour, 
the dynamics which we consider are very natural, possibly the simplest randomised pull-based protocols 
for reaching consensus on an opinion which is close to the average of the initial opinions. 
Agents in this protocols do not perform any complicated processing of the information obtained, other than the computation of a simple average of the received values.

In contrast to neighbourhood load balancing processes, in our model, only one selected node $u$ changes its value at each time step. 
This is another aspect of the simplicity of our processes, as they do not require coordination of simultaneous updates 
in two or more nodes.

It is easy to see that, over  time, the nodes' values converge to a single value, which 
we denote by $F$ and refer to as the \emph{convergence value}. 
This 
follows from observing that 
$\max_{u,v\in V}\left| \xi_u(t) - \xi_v(t)\right|$, where $\xi_x(t)$ is the value at node $x$ at the end of step $t$,
is non-increasing and tends to $0$. 
 $F$ is a random variable (depending on the random choices of the protocols) and its  expectation  
 turns out to be equal to the average of the initial values 
$\frac{1}{n}\sum_{u\in V} \xi_u(0)$
in the $\mtwo$, and the degree-weighted average of the initial values
$\frac{1}{2m}\sum_{u\in V} d_u \xi_u(0)$
in the $\mone$.
Hence we refer to either of 
these models as an \avgprocess. 

A natural question is to quantify the variance of $F$, $\var(F)$, to understand when $F$ 
is concentrated around its expected value. 
Estimating the variance and analysing the convergence rate of our protocols are the main aims of this paper.
By quantifying the variance of $F$, we analyse the ability of our processes to estimate the initial average of the node values.
In comparison to load balancing processes,
which keep the average node load as invariant, divergence of $F$ from its 
expectation can be viewed as the price of simplicity of our averaging processes.

To bound  $\var(F)$ we introduce a time-reversed dual process. This process simplifies the analysis and enables us to  derive the exact value of the variance for regular graphs, which,
under some mild assumptions on the initial distribution of the node values, implies concentration of $F$. Regarding the convergence time, we derive tight bounds for both protocols by using an appropriate potential function.  These results are summarised in~\cref{thm:mone}, with a more detailed statement regarding $\var(F)$ given 
in~\cref{thm:limitingVarExact1}.
In the  $\mone$ our bounds on both the convergence time and 
the $\var(F)$ show, somewhat surprisingly, only a negligible dependence on $k$.  It makes almost no difference if 
$k=1$ or if it is close to the node degree.
Another interesting property is that  $\var(F)$ is independent (up to some constants) of both the graph structure and 
the node mapping  of the initial values. 
For example, for the same set of initial node values and $\mone$ with $k=1$, the variance $\var(F)$ is the same for
complete graphs as for cycle graphs.
%

Processes with unilateral updates can provide a natural model in various scenarios. The graph can represent a social network where individuals (the nodes) change their opinions after observing opinions of some of their friends. For example, they might want to decide which phone to buy or how much they should budget for a given type of vacation, and check their 
friends' opinion on this. Unilateral updates model situations where an individual is influenced by the opinion of specialist friends. This does not necessarily mean that a ``specialist'' is also influenced by the opinions of all its friends (e.g. see \cite{berenbrink2017ignore}). There are numerous examples for models with unilateral updates in different areas, including: population dynamics \cite{lieberman2005evolutionary},
biology \cite{chazelle2012natural} and  
robotics \cite{beni1996pattern}. 

Note that the simple pull-based communication with unilateral updates considered in this paper cannot guarantee 
convergence to the initial (simple or weighted) average.
To guarantee such convergence, one would need a stronger communication model. 
For example, it is well-known that if two nodes average their values in a given time step 
(e.g., two neighbours in the graph in a load-balancing process), then convergence to the average is guaranteed \cite{Aldous2012Averaging}.
Such a protocol, however, requires  coordinated updates at these two nodes.
Compare this with our models where we do not have coordinated updates. 
In this context, analysing the variance of $F$ can be viewed as studying the 
cost of simplicity.
%
\paragraph{Our Results}
In this paper we show tight bounds on the convergence time of the $\mone$ and the $\mtwo$. 
Furthermore, we calculate the variance of the final value $F$ at the nodes. 
The update at step $t\ge 1$ in both models can be expressed as $\xi(t) = B(t)\xi(t-1) $, where 
$B(t) \in \mathbb{R}^{|V| \times |V|}$ is the 
\emph{communication matrix} for step $t$ and $\xi(t) \in \mathbb{R}^{|V|}$ is the vector of node values at the 
end of this step.
There are many convergence results for models which have \emph{doubly-stochastic} (average-preserving) update matrices 
(e.g., \cite{C89}). In contrast, the update matrices in our unilateral-update models do not have this property (matrices $B(t)$ are stochastic, but not doubly-stochastic) posing challenges in analysis. There are far less results for this setting \cite{DBLP:journals/automatica/GhaderiS14}.
Note, for example, that in our models the convergence value is not known ahead of time, requiring a novel potential function,  especially for non-regular graphs.\footnote{The standard potential functions which do not take the degrees into account are not decreasing in expectation.}

The main technical novelty of our paper lies, however, in the development of  
methods for obtaining asymptotically tight concentration bounds (as detailed in~\cref{thm:limitingVarExact1}).
A well-known duality between 
the voter model
and coalescing random walks,\footnote{%
In coalescing random walks, one random walk starts on each node and whenever two or more random walks meet, they merge into one random walk. The coalescence time is the expected time until only one random walk is left. 
}
obtained by coupling these two processes (one being run backwards in time), shows 
that the voting time and the coalescence time have the same distribution.
We derive a generalisation of this duality
which relates the $\mone$ to a diffusion process.
In this dual diffusion process, we have $n$ different `commodities', indexed by the nodes, and start by placing the unit amount of commodity $i$ on node $i$, for each $i\in V$.
These commodities are then being diffused throughout the graph in the following steps.
In one step, first a node $u$ and its $k$ neighbours are randomly selected.
Then for each commodity which currently has positive `load' at node $u$, a $(1-\alpha)$ fraction of this load is taken from $u$ and 
 distributed in equal proportion among the selected $k$ neighbours. 
 
If we couple $T$ steps of the $\mone$ process with the above diffusion process, running one of them backwards in time, then at the end 
of these $T$ steps,
the distribution of node values in $\mone$ is the same as  
the (weighted) distribution of commodities in the diffusion process.
Further, we show that the variance of the diffusion process is the same as 
the variance of a process of two correlated random walks associated with the diffusion process.
The latter random walk process can be analysed by studying the stationary distribution of an associated  Markov chain. 
Crucially, despite the fact that 
this Markov chain is not bi-directional (and so, it does not satisfy the detailed balanced equations),
we are able to compute its exact stationary distribution. 
Using this distribution and arguing via the diffusion process, we obtain the variance of the convergence value of the $\mone$.
The formal definitions, the formal statement of the duality and the details of the analysis
are given in~\cref{sec:concentration}.

Some of the proofs are omitted, but they are included in 
the Appendix.
%
%
\section{The Model and Detailed Contribution}
%
We have a connected undirected graph $G = (V,E)$ with $n$ nodes, $m$ edges and the set of nodes
$V = \{1,2,\ldots, n\}$.
If the graph is regular we use $d$ for the node degree; otherwise $d_i$ is the degree of node $i$ and 
$\D$ and $\d$ are the  maximum and minimum node degree.
We assume that every node has an initial {\em value}, 
which is a real number.
Furthermore, $\xi_{i}(t)$ is defined as a random variable equal to the value of node $i\in V$ at the end of the $t$-th 
time step, and $\xi(t)$ as the vector of these values.  
The vector $\xi(0)=\xi$ represents the  nodes' initial values. 
%
In this paper we show results for two different models called $\mone$ and $\mtwo$. In the following we will first introduce the $\mone$ and state the corresponding results. Then, we will do the same for the $\mtwo$. 

\begin{definition}[$\mone$]
The model has two parameters: a real number 
$\alpha \in [0,1)$ and an integer $k \ge 1$. 
At each time step $t\geq 1$, 
a node $u$ is chosen uniformly at random. That node, in turn, chooses $k$ of its neighbours $v_1, v_2, \dots v_k$ uniformly 
at random and without replacement. Then, 
\[ \xi_u(t) = \alpha \cdot \xi_u(t-1) + 
\frac{(1-\alpha)}{k} \cdot \sum_{i=1}^k  \xi_{v_i}(t-1), \]
and for all $v\neq u$, $\xi_{u}(t+1) = \xi_{u}(t)$.
\end{definition}
Note that for $k=1$ and $\alpha=0$ this model is equivalent to the voter model, where a node chosen uniformly at random
takes the opinion of its random neighbour. 
It is easy to see that in the $\mone$ the values of nodes converge to the same value $F$. 
In \cref{lem:NodeMartingale}, we show that  
$\E(F)=\sum_{i=1}^{n}\frac{d_i}{2m}\cdot \xi_i(0)$.
In the following we assume w.l.o.g.\ that the initial values are {\em centered at $0$}, so that 
$\sum_{i=1}^{n}\frac{d_i}{2m}\cdot \xi_i(0)=0$. 
%
For regular graphs, this becomes $\E(F)=\frac{1}{n}\sum_i\xi_i(0))=0$. 
Let $1-\lambda_2(P)$ be the eigenvalue gap of the associated transition matrix $P$ (defined in  \cref{sec:notation}),
and for $x\in R^n$, let $\| x \|_2$ be
the Euclidean norm of $x$.

\begin{theorem}[$\mone$]\label{thm:mone}
\begin{shaded}
Let $G$ be a connected graph, 
$\alpha$ a constant in $(0,1)$ and $k\ge 1$ integer (parameters of the model).
Let $\epsilon\in(0,1)$, 
and let $T=T_\epsilon$ be the first time such that $\xi(T)$ is $\epsilon$-converged 
(defined in~\cref{sec:notation}).
\begin{enumerate}
\item(Convergence time)  Then, we have w.h.p. \[T = O\left(\frac{n \log\left(n \twonorm{\xi(0)}/\epsilon\right)}{1-\lambda_2(P)}\right).\]
Furthermore, there exists an initial state $\xi(0)$ for which this bound is, up to constants, tight for every $k\geq 1$.
\item(Concentration) If $G$ is regular 
and regardless of $k$, we have
\[ \var(F) = \Theta\left(\frac{\twonorm{\xi(0)}}{n^2}\right).\]
\end{enumerate}
\end{shaded}
\end{theorem}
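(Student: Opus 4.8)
For the convergence-time bound of Part 1 the plan is to introduce a \emph{degree-weighted} potential
\[
\Phi(t)=\sum_{i\in V} d_i\,\big(\xi_i(t)-M(t)\big)^2,\qquad M(t)=\tfrac{1}{2m}\textstyle\sum_i d_i\,\xi_i(t),
\]
which vanishes exactly when all node values agree and so faithfully measures $\epsilon$-convergence. The degrees must be built in because, as the paper notes, the plain sum $\sum_i(\xi_i-\bar\xi)^2$ is not decreasing in expectation on non-regular graphs. The core step I would carry out is a one-step drift estimate: conditioning on $\xi(t-1)$ and on the chosen node $u$ with sample $v_1,\dots,v_k$, one expands $\xi_u(t)-\xi_u(t-1)=(1-\alpha)\big(\frac1k\sum_i\xi_{v_i}(t-1)-\xi_u(t-1)\big)$ and finds that the expected drop of $\Phi$ equals, up to the factor $1/n$ from selecting $u$ uniformly, a Dirichlet form of $\xi(t-1)$ with respect to $P$. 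Bounding this form below by the spectral gap (a Poincar\'e inequality) gives $\E[\Phi(t)\mid\mathcal F_{t-1}]\le\big(1-\tfrac{c(1-\lambda_2(P))}{n}\big)\Phi(t-1)$ for an absolute constant $c>0$, with $k$ entering only through lower-order terms. Iterating yields $\E[\Phi(T)]\le e^{-cT(1-\lambda_2(P))/n}\Phi(0)$; choosing $T=O\big(\tfrac{n\log(n\twonorm{\xi(0)}/\epsilon)}{1-\lambda_2(P)}\big)$ and applying Markov's inequality with a union bound over the $n$ nodes (the source of the extra $\log n$) makes $\xi(T)$ $\epsilon$-converged w.h.p.

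For tightness I would take $\xi(0)$ proportional to the eigenvector $f_2$ of $P$ achieving $\lambda_2(P)$ and track its projection. A short computation shows the \emph{expected} dynamics is $\E[\xi(t)\mid\xi(t-1)]=\big((1-\tfrac{1-\alpha}{n})I+\tfrac{1-\alpha}{n}P\big)\xi(t-1)$, whose $f_2$-component contracts by the precise factor $1-\tfrac{(1-\alpha)(1-\lambda_2(P))}{n}$ per step and no faster. Since this expected operator does not depend on $k$ at all, a second-moment argument showing the projection stays $\Omega(\epsilon)$ establishes the matching $\Omega\big(\tfrac{n}{1-\lambda_2(P)}\log(\cdots)\big)$ lower bound simultaneously for every $k\ge 1$.

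For the variance on regular graphs (Part 2) the plan is to exploit that $M(t)=\frac1n\sum_i\xi_i(t)$ is a mean-zero martingale with $M(t)\to F$ (this uses \cref{lem:NodeMartingale}), so by orthogonality of increments
\[
\var(F)=\sum_{t\ge 1}\E\big[(M(t)-M(t-1))^2\big],\qquad M(t)-M(t-1)=\tfrac{1-\alpha}{n}\Big(\tfrac1k\textstyle\sum_i\xi_{v_i}(t-1)-\xi_u(t-1)\Big).
\]
Each summand is $\frac{(1-\alpha)^2}{n^2}$ times the expectation of one fixed quadratic form $Q(\xi(t-1))$, so the task reduces to evaluating $\sum_{t\ge1}\E[Q(\xi(t-1))]$. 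This is where I would deploy the duality from the introduction: running the process backwards turns this second moment into occupation statistics of \emph{two correlated random walks}, and the infinite time-sum becomes the integral of $Q$ against the stationary distribution of the associated two-walk Markov chain.

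The hard part will be computing this stationary distribution exactly, and this is the main obstacle. Because the two-walk chain fails detailed balance, reversibility is unavailable and one must guess the stationary vector and verify it against the full balance equations; forcing the ``same-node'' and ``distinct-node'' coupled states to balance at once is the delicate point. Once it is in hand, I expect the cross terms $\xi_i(0)\xi_j(0)$ with $i\ne j$ to cancel against the diagonal contributions, leaving only $\sum_i\xi_i(0)^2=\twonorm{\xi(0)}$ times a factor $\Theta(1/n^2)$ that is insensitive to the graph's geometry and to $k$. That cancellation is precisely what produces the clean bound $\var(F)=\Theta\big(\twonorm{\xi(0)}/n^2\big)$ and its striking graph-independence, with the matching lower bound immediate since every increment contributes a strictly positive amount; the exact constants are recorded in \cref{thm:limitingVarExact1}.
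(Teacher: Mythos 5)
Your Part 1 follows the paper's route essentially verbatim: your degree-weighted potential is, up to the factor $2m$, exactly the paper's $\phi(\xi(t))=\langle\xi(t),\xi(t)\rangle_\pi-\langle\one,\xi(t)\rangle_\pi^2$, the one-step Dirichlet-form/Poincar\'e drift is \cref{pro:potentialdrop}, and the tightness construction with $\xi(0)\propto f_2(P)$ and the expected operator $(1-\tfrac{1-\alpha}{n})I+\tfrac{1-\alpha}{n}P$ is the paper's lower-bound argument (the paper gets the needed inequality $\E[\phi(\xi(t))]\ge\phi(\E[\xi(t)])$ from Jensen rather than a second-moment computation, and the $\log n$ comes from pushing $\E[\phi]$ down to $\epsilon/n^2$ before applying Markov, not from a union bound over nodes --- minor differences only).

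Part 2 has a genuine gap in the middle step. You decompose $\var(F)=\sum_{t\ge1}\E[(M(t)-M(t-1))^2]$ by orthogonality of martingale increments and then claim that ``the infinite time-sum becomes the integral of $Q$ against the stationary distribution of the associated two-walk Markov chain.'' It does not. The stationary distribution $\mu$ of the two-walk chain gives you $\lim_{t\to\infty}\E[\xi_a(t)\xi_b(t)]=\sum_{u,v}\mu(u,v)\xi_u(0)\xi_v(0)$ for a \emph{single} large time $t$ (\cref{lem:crwTOstationarydistr}); a \emph{sum over all $t$} of a quadratic form in $\xi(t)$ is instead governed by the Green's function (fundamental matrix) of that chain, a different and harder object, and the individual increment expectations tend to $0$ rather than to the stationary integral. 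Indeed, bounding your time-sum naively through the potential decay of Part 1 yields only $O\bigl(\twonorm{\xi(0)}/(n^2(1-\lambda_2(P)))\bigr)$, which is \emph{not} graph-independent. The paper avoids this entirely: it writes $\var(\avg(T))=\tfrac{1}{n^2}\sum_{x,y}\E[\xi_x(T)\xi_y(T)]$ directly, passes through \cref{prop:duality} and \cref{prop:variance_both_processes1} to the two correlated walks, and evaluates at the stationary distribution of \cref{prop:stationaryDistrib} for $T$ past the mixing time. Your duality machinery supports exactly that computation, so the fix is to drop the increment decomposition, not to repair it.

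Two further points. First, the cross terms do not ``cancel, leaving only $\sum_i\xi_i(0)^2$'': after using $\sum_{u,v}\xi_u\xi_v=0$ to kill the distant pairs, an edge term $(\mu_1-\mu_+)\sum_{(u,v)\in E^+}\xi_u(0)\xi_v(0)$ survives and must be bounded on both sides via $0\le\sum_{(u,v)\in E^+}\xi_u\xi_v+d\sum_u\xi_u^2\le 2d\twonorm{\xi}$; this is where the matching constants in \cref{thm:limitingVarExact1} come from. Second, the lower bound is not ``immediate since every increment contributes a strictly positive amount'': for $\xi(0)=f_2(P)$ with $\lambda_2(P)$ close to $1$ each early increment is only of order $(1-\lambda_2(P))\twonorm{\xi(0)}/n^3$, so positivity of the summands alone does not give $\Omega(\twonorm{\xi(0)}/n^2)$; the lower bound genuinely requires the exact stationary-distribution computation.
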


The proof of the first part of the theorem can be found in \cref{app:thmMone},
where we also present some more detailed bounds on the convergence time.
Those detailed bounds indicate that the convergence time only slightly improves with increasing $k$ 
since it scales as $(1+1/k) \in [1,2]$. 

Our main result, the concentration bound, is stated in the second part of the theorem, with
more detailed bounds derived in \cref{thm:limitingVarExact1}.
%
These bounds do not depend on either the value of $k$ or any structural properties of the graphs.
Thus, for example, the variance of $F$ on the clique and the cycle are asymptotically the same.
The initial distribution of the values $\xi(0)$ also does not have impact, asymptotically. 
Note that if all initial values are $o(\sqrt{n})$, then $\var(F)=o(1)$, 
and by Chebychev's Inequality, with probability $1-o(1)$, $F = o(1)$.

Our bounds on the convergence time are similar to the bounds on the convergence time of 
neighbourhood load balancing~\cite{DBLP:journals/jpdc/BerenbrinkFH09}, which, in turn, 
can be related to the mixing time of random walks.  
Load-balancing (diffusion) bounds are often stated in terms of the \emph{discrepancy} of  $\xi(t)$
defined as $K=\max_i \xi_i(t) - \min_i \xi_i(t)$.  
Note that $\log(\twonorm{\xi(0)}) \leq 2\log(Kn)$, due to our assumption that $\sum_i\xi_i(0)=0$.
The additional factor of $n$ in our convergence bound in comparison to~\cite{DBLP:journals/jpdc/BerenbrinkFH09}
is due to the fact that we consider an asynchronous model where one node is activated at the time, while 
\cite{DBLP:journals/jpdc/BerenbrinkFH09} considers a synchronous model.

Note that our convergence time bound does not give results for the voter model since we assume that $\alpha$
is a positive constant. For comparison, an upper bound of $O\left({n}/\left(1-\lambda_2(P)\right)\right)$ on the expected convergence time of the \emph{parallel} voter model for regular graphs is given in~\cite{DBLP:conf/podc/CooperEOR12}. 
Thus, our process is faster by $\Omega(n/\log n)$, provided that 
$K$ and $1/\epsilon$ are polynomial in $n$.

\begin{definition}[$\mtwo$] 
At each time step $t\geq 1$, 
a {\em directed} edge $e=(u,v)$ is chosen uniformly among all edges. Then
$ \xi_u(t) = \alpha \xi_u(t-1) + (1-\alpha)  \xi_{v}(t-1). $
For all $u'\neq u$, $\xi_{u'}(t+1) = \xi_{u'}(t)$.
\end{definition}

We obtain the following results for the $\mtwo$. The proof of the results can be found in the Appendix.
Let $\lambda_2(L)$ be the second-smallest eigenvalue of the Laplacian of $G$ (defined in \cref{sec:notation}).

\begin{theorem}[$\mtwo$]\label{thm:mtwo}
\begin{shaded}
Let $G$ be a connected graph and $\alpha$ a constant in $(0,1)$.
Let $\epsilon\in(0,1)$,
and let $T=T_\epsilon$ be the first time such that $\xi(T)$ is $\epsilon$-converged. 
\begin{enumerate}
\item(Convergence time)  Then, we have w.h.p. 
\[T = O\left(\frac{m \log\left(n{\twonorm{\xi(0)}}/{ \epsilon}\right)}{\lambda_2(L)}\right).\]
Furthermore, there exists an initial state $\xi(0)$ for which this bound is tight up to constants.
\item(Concentration) If $G$ is regular, 
we have
\[ \var(F) = \Theta\left(\frac{\twonorm{\xi(0)}}{n^2}\right)\]
\end{enumerate}
\end{shaded}
\end{theorem}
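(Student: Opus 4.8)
The plan is to handle the two parts separately, and to dispose of the concentration bound first, since it reduces to a result already in hand. On a $d$-regular graph the \mtwo\ selects an ordered pair $(u,v)$ with probability $1/(2m)=1/(nd)$, which is exactly the probability that the \mone\ with $k=1$ chooses $u$ and then picks the neighbour $v$, namely $\tfrac1n\cdot\tfrac1d$. Hence on regular graphs the two processes are literally identical, and the claimed bound $\var(F)=\Theta(\twonorm{\xi(0)}/n^2)$ is precisely the $k=1$ instance of \cref{thm:mone}(2), which holds regardless of $k$. So Part~2 requires no new argument.

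For the convergence-time upper bound (Part~1), recall that $\E(F)$ is the simple average $\tfrac1n\sum_i\xi_i(0)$. I would first note this average is preserved \emph{in expectation}: an update on the directed edge $(u,v)$ changes $\sum_i\xi_i$ by $(1-\alpha)(\xi_v-\xi_u)$, and over all $2m$ directed edges the contributions of $(u,v)$ and $(v,u)$ cancel. After centring so that $\sum_i\xi_i(0)=0$, the correct potential is the spread about the \emph{current} mean,
\[ \Psi(t)=\twonorm{\xi(t)-\bar\xi(t)\mathbf{1}}=\twonorm{\xi(t)}-\tfrac1n\big(\textstyle\sum_i\xi_i(t)\big)^2,\qquad \bar\xi(t)=\tfrac1n\textstyle\sum_i\xi_i(t). \]
Writing $L=D-A$ and using $\xi^\top L\xi=\sum_{\{u,v\}\in E}(\xi_u-\xi_v)^2$, a direct expansion over the $2m$ directed edges gives $\E[\twonorm{\xi(t)}\mid\xi(t-1)]-\twonorm{\xi(t-1)}=-\tfrac{\alpha(1-\alpha)}{m}\xi^\top L\xi$ and, with the cross term again killed by antisymmetry, $\E[(\sum_i\xi_i(t))^2-(\sum_i\xi_i(t-1))^2]=\tfrac{(1-\alpha)^2}{m}\xi^\top L\xi$, so that
\[ \E[\Psi(t)\mid\xi(t-1)]-\Psi(t-1)=-\frac{1-\alpha}{m}\Big(\alpha+\frac{1-\alpha}{n}\Big)\xi^\top L\xi. \]
Since $\xi(t)-\bar\xi(t)\mathbf{1}\perp\mathbf{1}$ and $\mathbf{1}$ spans $\ker L$, Courant--Fischer gives $\xi^\top L\xi\ge\lambda_2(L)\Psi(t-1)$; dropping the non-negative $\tfrac{(1-\alpha)^2}{n}$ term yields $\E[\Psi(t)\mid\xi(t-1)]\le(1-\tfrac{\alpha(1-\alpha)\lambda_2(L)}{m})\Psi(t-1)$, hence $\E[\Psi(t)]\le(1-\alpha(1-\alpha)\lambda_2(L)/m)^t\Psi(0)$. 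Because $\epsilon$-convergence is governed by the spread $\Psi$, it suffices to drive $\Psi(T)$ below a constant times $\epsilon^2$; I would convert the expected decay to the w.h.p.\ statement by Markov's inequality, choosing $T$ so that $\E[\Psi(T)]\le\epsilon^2n^{-c}$. The extra polynomial factor in $n$ is exactly what turns $\log(\Psi(0)/\epsilon^2)$ into $\log(n\twonorm{\xi(0)}/\epsilon)$, matching the stated bound.

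For tightness I would take $\xi(0)$ proportional to the Fiedler vector $v_2$ (the $\lambda_2(L)$-eigenvector, orthogonal to $\mathbf{1}$) and track the linear functional $\langle v_2,\xi(t)\rangle$. An edge update changes it by $(1-\alpha)(\xi_v-\xi_u)v_2(u)$, and summing over directed edges, using $v_2^\top L\xi=\lambda_2(L)\langle v_2,\xi\rangle$, gives the exact recursion $\E[\langle v_2,\xi(t)\rangle\mid\xi(t-1)]=(1-\tfrac{(1-\alpha)\lambda_2(L)}{2m})\langle v_2,\xi(t-1)\rangle$. Thus $\langle v_2,\xi(t)\rangle$ decays in expectation at rate $1-\Theta(\lambda_2(L)/m)$, and since $\Psi(t)\ge\langle v_2,\xi(t)\rangle^2/\twonorm{v_2}$, the potential cannot reach $\epsilon^2$ before $t=\Omega\big(\tfrac{m}{\lambda_2(L)}\log(\twonorm{\xi(0)}/\epsilon)\big)$. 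The upper and lower rates are $\alpha(1-\alpha)$ and $(1-\alpha)/2$ times $\lambda_2(L)/m$, which agree up to a constant depending only on $\alpha$, giving tightness up to constants.

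The main obstacle is precisely that the mean is conserved only in expectation rather than pathwise, so one cannot use deviation from a fixed target and must instead work with the moving-mean potential $\Psi$ and verify that the extra $\tfrac{(1-\alpha)^2}{n}$ correction only accelerates the decay. The other delicate point is the lower bound: upgrading the expected decay of $\langle v_2,\xi(t)\rangle$ to a genuine bound on the hitting time $T$ needs a second-moment argument, e.g.\ analysing the rescaled martingale $(1-\tfrac{(1-\alpha)\lambda_2(L)}{2m})^{-t}\langle v_2,\xi(t)\rangle$ to show $\langle v_2,\xi(t)\rangle$ stays a constant fraction of its mean with constant probability.
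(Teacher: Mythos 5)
Your proposal matches the paper's proof in all essentials: Part~2 is reduced to \cref{thm:mone}(2) by identifying the two models on regular graphs, and Part~1 uses the same moving-mean potential (the paper's $\bar{\phi}_V$, your $\Psi$), the same one-step contraction via $\xi^\top L\xi\ge\lambda_2(L)\Psi$ followed by Markov's inequality for the w.h.p.\ statement, and the same Fiedler-vector initial state for the tightness claim. The only (harmless) deviations are that you compute the change of $\tfrac1n(\sum_i\xi_i)^2$ exactly where the paper merely bounds it via Jensen's inequality, and that you track the scalar $\langle v_2,\xi(t)\rangle$ where the paper tracks $\E[\xi(t)]$ and applies Jensen to the convex potential; you also rightly flag the expectation-to-hitting-time step in the lower bound, which the paper likewise leaves implicit.
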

It is interesting to note that the fixed point in the $\mtwo$ is, in expectation, the initial average---even for irregular graphs. 
The concentration follows from \cref{thm:mone} since for regular graphs the $\mone$ and the $\mtwo$ are identical (for $k=1$).
For $d$-regular graphs  both theorems (\cref{thm:mone} and \cref{thm:mtwo}) give the same bound on the convergence time. Note that in this case there is a factor of $d$ between $1-\lambda_2(P)$ and $\lambda_2(L)$. 

\section{Previous Related Results}
Our protocols fit very well into the framework of opinion dynamics on graphs. The study of these dynamics has a long history in many academic disciplines, including physics, computer science, electrical engineering, population genetics and epidemiology. 
Processes falling into this category include the voter process \cite{DBLP:conf/podc/CooperEOR12, DBLP:conf/icalp/BerenbrinkGKM16, DBLP:journals/jda/CooperR18}, gossip algorithms \cite{DBLP:conf/podc/Mosk-AoyamaS06}, opinion dynamics \cite{HK02, BhattacharyyaBCN13, DeGroot, FJ90},
consensus  \cite{DBLP:conf/icalp/BerenbrinkFGK16, DBLP:conf/podc/GhaffariP16a, DBLP:conf/wdag/CooperRRS17, DBLP:journals/sigact/BecchettiCN20}, majority protocols \cite{DBLPjournals/dc/AngluinAE08, CER14, DBLP:conf/wdag/CooperRRS17, DBLP:conf/focs/DotyEGSUS21} diffusion load balancing \cite{DBLP:phd/hal/Lambein20, alistarh_et_al:LIPIcs:2020:12414, rabani1998local}, or so-called averaging 
processes~\cite{aldous}.  

Many of these models assume unilateral updates. For example, in the voter process, in each step a random agent adopts the opinion of a randomly chosen neighbour. Many majority protocols \cite{CER14, DBLP:conf/wdag/CooperRRS17} assume that an agent randomly chooses $k$ of its neighbours and adopts the majority opinion. Our model is continuous version of this where we are allow to average between multiple opinions rather than limit the opinions to be a non-numeric set.   
In many opinion dynamics,
e.g., \cite{DeGroot, FJ90}, the individuals communicate with all their neighbours before they adjust their opinions. These models are usually deterministic. The authors of  \cite{DBLP:conf/wine/FotakisKKS18} consider a randomised variant of the Friedkin-Johnsen model which is similar to our $\mone$. The agents are only allowed to interact with a small randomly chosen subset of their neighbours. The authors refer to their model as using only limited information. The motivation of the model is as follows. 
In today's social networks many users have a very large amount of followers/friends resulting in large degree nodes. In such a setting, it seems unnatural to assume that every agent/user communicates first with all its friends before making up their own mind about a subject (e.g., the esteem they hold for a brand). Instead, they might only communicate with a small subset of their friends before taking a decision. 

\paragraph{Voting and Majority.} Our model can be regarded as a generalisation of the
voter model which was first analysed in~\cite{HP01}.
In \cite{HP01} the authors assume a network, and any node of the network has a discrete value. The authors show a bound of $O(t_m \cdot \log n)=O(n^3\log n)$ where $t_m$ is the expected  meeting time  of two random walks.
In \cite{DBLP:conf/podc/CooperEOR12} the authors provide an improved upper bound of $O((1-\lambda_2)^{-1} \log^4n+ \rho))$ on the expected consensus time for  any graph $G$, where $\lambda_2$ is the second eigenvalue of the transition matrix of a random walk on $G$, with $\rho = \big(\sum_{u\in V(G)}d(u)\big)^2/ \sum_{u\in V(G)}d^2(u)$.  The value of $\rho$ ranges from $\Theta(1)$, for the star graph, to  $n$, for regular graphs. The authors of \cite{DBLP:conf/icalp/BerenbrinkGKM16} consider voting in dynamic graphs and analyse the consensus time in terms of the conductance $\phi$  of the underlying graphs. 
The authors show a bound on the expected consensus time of 
$O(d_{max} \cdot n/(d_{min} \cdot \phi))$ for graphs with minimum degree $d_{min}$ and maximum degree $d_{max}$. A generalisation of this model for discrete opinions was introduced in \cite{CER14}. Similar to our model,  the nodes consider the opinion of $k$ randomly chosen neighbours. In \cite{CER14} the nodes adopt the majority opinion (the own opinion can be included or not), and not the average as in our models.  There are several different models with different tie breaking-rules. See \cite{DBLP:journals/sigact/BecchettiCN20} for an overview article about related consensus dynamics.
\paragraph{Opinion Dynamics.} There has been an interesting line of research trying to explain the spread of innovations and new technologies based on local interactions, where nodes are only allowed to communicate with their direct neighbours \cite{DBLP:conf/focs/MontanariS09, Draief,  DBLP:journals/siamcomp/Chazelle15, chazelle2012natural}.
The study of opinion-forming processes via local communication was introduced by DeGroot
\cite{DeGroot} where it is assumed that the underlying network is a directed graph with weighted edges. The  nodes of the network are agents having opinions modelled as real numbers. The state of the network in step $t+1$ is the product of the opinions in step $t$ times the weighted adjacency matrix. If and how fast the process converges  depends heavily on the matrix. For example, if the edge weights of node with degree $d$ are chosen as $1/d$, the convergence time is  roughly $\log n\cdot (1-\lambda_{\max})^{-1}$, where $\lambda_{\max}$ is the largest eigenvalue of the matrix (see \cite{DBLP:journals/automatica/GhaderiS14}). In \cite{particles} the author uses a similar averaging approach to model the movement of particles.  
Friedkin and Johnsen (FJ) \cite{FJ90} extended the model by incorporating private opinions. n there, every agent has a private opinion, which does not change, and a so-called expressed opinion. The expressed opinion of an agent is defined as a deterministic function of the expressed opinions of all its neighbours and its private opinion. 
Another very influential model is the one by  Hegselmann and Krause~\cite{HK02}. In this model the set of neighbours that influence a given agent is no longer fixed and the agents' opinions and their respective sets of influencing neighbours co-evolve over time. At any point in time the set of influencing neighbours of an agent consists of all the neighbours in a given static social network with an opinion close to their own opinion.  In \cite{FotakisPS16} the authors  study convergence properties of a general model where the agents update their opinions in rounds to a weighted average of all opinions in their neighbourhoods. In \cite{DBLP:conf/wine/FotakisKKS18} the authors consider a variant of the Friedkin Johnson model where the agents interact with a small subset of their neighbours. They refer to their model as using only limited information.  The authors show convergence properties of simple and natural variants of the FJ in this setting.

\paragraph{Diffusion and Consensus.}
Another research area related to our work is that of  diffusion protocols. Here a network of $n$ identical nodes is given and every node stores a value. The value can, for example, model the load of the nodes or it can simply be a number.   The protocol runs in parallel steps and in each step all nodes average their own value with the value of all neighbours. In the load balancing setting on a regular network with degree $d$, this is the same as sending a load of $\max\{0, \ell_u-\ell_v\}$ over the edge from $u$ to $v$ where $\ell_u$ ($\ell_v$) is the load of node $u$ ($v$). 
The objective is to distribute the load as evenly as possible among the nodes whilst minimizing the number of load balancing steps. If the values model numbers, diffusion can also be used to reach average consensus \cite{DBLP:phd/hal/Lambein20}.
A variant of a diffusion process is the so-called dimension  exchange process where the edges used for the balancing form a matching. 
The diffusion model was first studied by \cite{C89} and, independently, by \cite{DBLP:phd/hal/Lambein20}. The authors of \cite{MGS98} show a tight connection between the
convergence rate of the diffusion algorithm and the absolute value of the second largest eigenvalue~$\lambda_{\max}$ of the diffusion matrix $P$. P is defined as follows: $p_{ij} =1/(d+1)$ if  $\{i,j\} \in E$ where $d$ is the degree of the nodes. The convergence time is bounded by $2 \log (n^2+K)/(1-\lambda_{\max})$, where $K$ is the initial maximum load difference. There is a similar connection between $\lambda_{\max}$ and the convergence time for the dimension exchange model \cite{DBLP:journals/jpdc/BerenbrinkFH09}.

There is a vast amount of literature about diffusion processes in different research communities. In \cite{SS94} the authors observe relations between
convergence time and properties of the underlying network, like electrical and fluid conductance. In \cite{DBLP:phd/hal/Lambein20} there is a nice overview about the results. The author distinguishes between consensus (all nodes have to agree on a value) or average consensus where nodes have to calculate the average, similar to neighbourhood load balancing. The main goal of the author is to develop diffusion-type  algorithms where the nodes are only aware of their own edges, but neither of the edges of their neighbours nor of their degrees. 
The author presents algorithms for both types of consensus with convergence time $\widetilde{O}(n^4)$ and dynamic networks, together with randomised algorithms for undirected graphs. 

Many publications focus on average consensus via diffusion type processes. 
In \cite{DBLP:conf/focs/KempeDG03} the authors present a 
gossip-based algorithm (which is similar to an averaging algorithm) for complete graphs.  The authors show that the protocol can be used  for the computation of sums, averages, random samples, quantiles, and other aggregate functions. They also show that our protocols converge exponentially fast. In \cite{DBLP:journals/tit/BoydGPS06, DBLP:conf/isit/BenezitBTTV10} the authors generalise the result to arbitrary graphs. In \cite{DBLP:journals/tit/BoydGPS06} the authors show that the averaging time of their algorithm depends on the second largest eigenvalue of a doubly stochastic matrix.  
\cite{DBLP:conf/cdc/Dominguez-GarciaH11a, DBLP:journals/tcns/SilvestreHS19} consider averaging algorithms for directed graphs where agents transmit their values to one or several agents, but they do not receive data. 
Nodes update their values using a weighted linear combination of their own value and the values of neighbouring nodes. 
In \cite{alistarh_et_al:LIPIcs:2020:12414} the authors study a dynamic load balancing process on cycles. 
In \cite{DBLP:conf/icalp/CaiS17}  assume that  the load inputs are drawn from a fixed probability distribution. In
\cite{DBLP:conf/podc/LunaB15, DBLP:conf/icalp/KowalskiM18, DBLP:conf/icalp/KowalskiM19} the authors use a diffusion algorithm for counting the nodes in an anonymous network and in \cite{DBLP:conf/focs/DotyEGSUS21} diffusion is used for a majority process in the population model. In \cite{DBLP:conf/stoc/FriedrichS09, DBLP:conf/focs/SauerwaldS12} the authors compare continuous with discrete diffusion processes. 

\section{Notation and Preliminaries} \label{sec:notation}
In order to prove detailed concentration bounds
(stated in \cref{thm:limitingVarExact1}),
we introduce additional notation and new concepts.
We denote our \avgprocess with $(\xi(t))_{t\ge 0}$,
where $\xi(t) = (\xi_u(t))_{u\in V}$. 
For any vector $\xi(t)\in \R^n$, we define the quantities
\begin{align}\label{eq:katze}
\avg(t)=\frac{1}{n}\sum_{u\in V}\xi_u(t) \qquad
\mbox{and}  \qquad
 M(t) =\sum_{u\in V}\frac{d_u}{2m}\cdot \xi_u(t).
 \end{align}
For any vector $v\in \R^n$, let $\| v \|_2 = \sqrt{\sum_i v_i^2 } $.
 Let $L=D-A$ be the graph Laplacian of $G$, where $D=\text{diag}(d_1,...,d_n)$  is the diagonal matrix with entries $d_i$ (degrees of nodes), and $A$ is the adjacency matrix of $G$. $L$ is a symmetric positive semi-definite matrix with eigenvalues
$0=\l_1(L)<\l_2(L) \le \cdots \le \l_n(L)$. Note that $\l_2(L) >0$ follows from the connectivity of $G$ ($L$ is irreducible). Further, $e^{(i)} \in \{ 0,1\}^n$ is the indicator (column) vector, where the $i$-th entry is $1$ and all other entries are $0$, 
$\one$ is a vector of all $1$'s (of appropriate length to fit the context),
and symbol $\top$ indicates the transpose of a vector or matrix 
(introduced to avoid confusion with the time step $T$). 

We consider the following {\em lazy variant}
of the $\mone$, where in each step with 
 probability $1/2$ the selected node performs no update, i.e.,
$\x(t)=\x(t-1)$ and otherwise behaves as before. This can be related to the transition matrix $P$ of a lazy random walk on $G$, with $p_{(i,i)} =1/2$ and $p_{(i,j)}=1/(2 d_i)$, for each $i\in V$ and $(i,j)\in E$. Let $\lambda_2(P)$ be the second-largest eigenvalue of $P$ and let $f_2(P)$ be the corresponding eigenvector. Let $\pi$ denote the vector with  $\pi_i$ equal to the probability that a fully mixed random walk is at the node $i$.  
We will use the $\pi$ weighted inner product of two vectors $\nu$ and $\nu'$ in $\R^n$
given by 
\begin{align}
\left \langle \nu,\nu' \right \rangle_{\pi} = \sum_{x\in V} \pi_x \nu_x \nu'_x.
\end{align}
Let $\pi_{max}=\D/2m$ and $\pi_{min}=\d/2m$.
Our goal is to analyse the time it takes until the nodes have almost identical values. To do so, we will make use of the following potential function.
\begin{align}
\phi(\xi(t)) &= \langle \xi(t), \xi(t) \rangle_\pi - \left \langle \one, \xi(t) \right \rangle_{\pi}^2 \nonumber\\
&= \frac{1}{2}\sum_{u,v\in V}\pi_u\pi_v(\xi_u(t)-\xi_v(t))^2.\label{eqn:defiPot}
\end{align}

We say that the process has $\epsilon$-\emph{converged} whenever $\phi(\xi(t)) \leq \epsilon$. 
For comparison with related notions:
a $(\epsilon/n)^6$-convergence   implies a  discrepancy $K$ (maximum value minus minimum value) of at most $\epsilon$.
Finally, a fundamental property of the processes defined in \cref{eq:katze} is that they are martingales, as shown in the following result (its proof is in the Appendix).
\begin{lemma}\label{lem:NodeMartingale}
We have
$\E(M(t+1)|\xi(t))= M(t).$
In particular,
$\E(M(t+1)|\xi(0))= M(0)$
and for regular graphs
$\E(M(t+1)|\xi(0)) = \avg(0).$
\end{lemma}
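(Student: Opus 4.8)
The plan is to establish the one-step identity $\E(M(t+1)\mid \xi(t)) = M(t)$ directly, and then deduce both `in particular' claims from it: the statement conditioned on $\xi(0)$ follows by the tower property (iterating the one-step identity and using the law of total expectation), and the regular-graph statement follows because $2m = nd$ forces $M(t) = \avg(t)$ as soon as every $d_u = d$, so the preserved quantity $M(0)$ equals $\avg(0)$.

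For the one-step identity I would condition on which node $u$ is activated, each with probability $1/n$. Given $u$, the only coordinate that changes is $\xi_u$, with increment $\xi_u(t+1)-\xi_u(t) = (1-\alpha)\big(\tfrac1k\sum_{i=1}^k \xi_{v_i}(t) - \xi_u(t)\big)$. The key observation is that the $k$ neighbours $v_1,\dots,v_k$ are sampled uniformly without replacement from the $d_u$ neighbours of $u$, so they are exchangeable and $\E(\xi_{v_i}(t)\mid u,\xi(t)) = \tfrac1{d_u}\sum_{w\sim u}\xi_w(t)$ for each $i$. Hence the expected sample average equals the full neighbourhood average $\tfrac1{d_u}\sum_{w\sim u}\xi_w(t)$, independently of $k$. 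This is precisely the source of the $k$-independence emphasised throughout the paper, and it is worth flagging explicitly at this step.

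Weighting the increment of the activated node by $d_u/(2m)$ and averaging over the uniform choice of $u$ then gives $\E(M(t+1)-M(t)\mid \xi(t)) = \frac{1-\alpha}{2mn}\sum_{u}\big(\sum_{w\sim u}\xi_w(t) - d_u\,\xi_u(t)\big)$. The closing step is a handshaking/double-counting identity: $\sum_{u}\sum_{w\sim u}\xi_w(t) = \sum_{w} d_w\,\xi_w(t)$, since each $\xi_w$ is counted once per neighbour of $w$. This term cancels $\sum_u d_u\,\xi_u(t)$ exactly, so the conditional expected change is $0$.

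The computation itself is short, so the main obstacle is conceptual rather than technical: it is to see \emph{why} the degree weighting $d_u/(2m)$ (the stationary distribution of the lazy walk) is the correct one, instead of the uniform weighting. It is the unique weighting under which the `outflow' contribution $d_u\,\xi_u$ and the `inflow' contribution $\sum_{w\sim u}\xi_w$ telescope after double counting; with uniform weights the analogous sum does not collapse. This is the formal reason why, for non-regular graphs, the quantity preserved in expectation is the degree-weighted mean $M(t)$ rather than $\avg(t)$, and it is what I would take care to make transparent in the write-up.
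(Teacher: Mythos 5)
Your proposal is correct, and the two "in particular" claims are handled properly (tower property, and $2m=nd$ collapsing $M$ to $\avg$ for regular graphs). The underlying cancellation is the same as in the paper's proof, but the decomposition is different: you condition on the activated node $u$, compute the expected increment of $M$ directly using exchangeability of the without-replacement sample, and close with the handshake identity $\sum_{u}\sum_{w\sim u}\xi_w = \sum_{w} d_w\xi_w$. The paper instead writes one step as $\xi' = L\xi$ for a random update matrix $L$, computes $\E(L) = (1-1/n)I + (1/n)P$, and then moves this operator onto $\mathbf{1}$ using the facts that $P$ is self-adjoint with respect to $\langle\cdot,\cdot\rangle_{\pi}$ and $P\mathbf{1}=\mathbf{1}$; your handshake identity is exactly the elementwise form of that self-adjointness applied to the pair $(\xi,\mathbf{1})$. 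Your route is more elementary and self-contained, and it makes transparent why the degree weighting is the unique one that telescopes; the paper's route is slightly more abstract but reuses the $\langle\cdot,\cdot\rangle_{\pi}$ machinery that is needed anyway for the potential-function analysis, and it isolates the single object $\E(L)$ that also drives the convergence-time bounds. Either write-up is acceptable.
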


\section{Proof of the Concentration Bounds}\label{sec:concentration}
In this section we will show the second part of our main theorem which states results for the $\mone$. Note that this part of the theorem holds for regular graphs only, and recall that the results extend to the $\mtwo$, since for regular graphs both models are identical. 
To show our result we will study the dual $(W(t))_{t\ge0}$ of the \avgprocess $(\xi(t))_{\ge t}$, which we call the \dualprocess (for a detailed definition of the process see \cref{sec:dual}). The latter process  can be thought of as an $n$-dimensional diffusion process in which the dimensions are balanced separately but not  independently of each other. 
Due to the duality (which we show in \cref{prop:duality}) we get that the variance of the final node values of both processes is the same. 
Therefore, it is sufficient to study the variance of the dual process. 
In turn, to analyse the variance of the dual process we study the joint distribution $\mu$ of two Random Walk Processes $(\widetilde{W}(t))_{t\ge 0}$ (see \cref{sec:CorrelatedRWs} for a detailed definition), where  two random walks start in two (not necessarily different) nodes. 
We will present and prove the following relationships. 
%
%
\begin{equation*}
\begin{split}
\Var(M(t)) \;\; &\stackunder[5pt]{\approx}{\scalebox{0.8}{\color{cfirst}\cref{lem:dualitySupportLemma}}} \;\;
\Var(W(t)) \;\; \stackunder[5pt]{\approx}{\scalebox{0.8}{\color{cfirst}\cref{prop:variance_both_processes1}}} \Var(\widetilde{W}(t))\\
&\stackunder[5pt]{\approx}{\scalebox{0.8}{\color{cfirst}\cref{lem:crwTOstationarydistr}}} 
\;\; \sum_{u,v}\mu(u,v)\xi_u(0)\xi_v(0)
\end{split}
\end{equation*}
Through \cref{lem:dualitySupportLemma} and \cref{prop:variance_both_processes1} we show that the three processes have the same variance. Then, through \cref{lem:crwTOstationarydistr} we will show that this variance can be expressed in terms of $\mu$ and $\xi(0)$. Recall that we use $^\top$ to indicate the transpose of a vector.
\subsection{The \dualprocess}\label{sec:dual}
We introduce the \dualprocess, which is closely related to our \avgprocess. We call this relationship \emph{duality}.
The \dualprocess takes as parameters 
a cost (row) vector $c\in \mathbb{R}^n$
and an initial load (column) vector 
$q(0)\in \mathbb{R}^n$. 
At a time step $t\ge 0$, the load vector is 
denoted by $q(t)\in \mathbb{R}^n$ and its
cost is the value $c\, q(t)$.
At every time step $t\ge 1$, a node $u(t)$  is sampled uniformly at random.  Node $u(t)$ then chooses a subset $S(t)$ of $k$ neighbours uniformly at random and spreads a $1-\alpha$  fraction of its load uniformly to these $k$ neighbours. The choice of neighbours and load redistribution can be represented using matrix $B(t)$ defined as follows.
\begin{equation}\label{eq:pony}
B_{i,j}(t) = \begin{cases}
1, & \text{if $i=j\neq u(t)$,}\\
\alpha, & \text{if $i=j=u(t)$,}\\
(1-\alpha)/k, & \text{if $i
\in S(t)$ and $j=u(t)$,}\\
0, & \text{otherwise}.
\end{cases}
\end{equation}
Then at step $t\ge 1$, the load vector and 
the state of the process (the cost of the load
distribution) are given, respectively, by
\begin{align*}
q(t)&= B(t)\, q(t-1) =  R(t)\, q(0)
\;\; \mbox{and} \;\;
w(t) = c\, q(t) = c\, R(t)\, q(0),
\end{align*}
where
\begin{equation} \label{eq:llama}
R(t)= B(t)\cdot B(t-1)\cdots B(1).
\end{equation}

The \dualprocess can be applied simultaneously to 
a number of load vectors
$q^{(1)}, q^{(2)}, \ldots, q^{(r)}$,
$r \ge 1$, of different commodities:
\begin{align*}
(q^{(1)}(t), q^{(2)}(t), & \ldots,q^{(r)}(t)) \\
&= R(t)\, (q^{(1)}(0), q^{(2)}(0), 
\ldots, q^{(r)}(0)), \\
W(t) &= c\, R(t)\, (q^{(1)}(0),q^{(2)}(0),\ldots,q^{(r)}(0) ),
\end{align*}
where 
$q^{(j)}(t)$ and 
$W^{(j)}(t)$ ($j$-th entry in $W(t)$) are, respectively, the load vector~$j$
and its cost at step $t$.
Note that $W(t)  \in \mathbb{R}^r$.
%
%
%
%
%
%
Next, we formally state the duality between the \avgprocess and the \dualprocess.
For $u\in V$, the column vector $e^{(u)}$ is
the unit vector with $1$ at position~$u$
and $0$'s everywhere else.

\begin{proposition}\label{prop:duality}
If the \dualprocess is applied to cost vector 
$c = \xi^\top(0)$
and to $n$ initial load vectors $e^{(u)}$, $u\in V$
(that is, the total load -- one unit -- of `commodity' $u$ is initially at node $u$),
then for each $T\ge 0$, the probability 
distribution of $\xi(T)$ in the \avgprocess
is the same as
the probability distribution of 
$W(T)$ in the \dualprocess.
That is,
for any (column) vector $a\in \mathbb{R}^n$,
$\Prob(\xi(T) = a) = \Prob(W(T) = a^\top)$.
\end{proposition}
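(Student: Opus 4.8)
The plan is to establish the duality via a backward time-reversal coupling between the \avgprocess and the \dualprocess. The key observation is that both processes are driven by the same sequence of communication matrices $B(t)$ (those from \cref{eq:pony}), but applied in opposite temporal order. In the \avgprocess the state evolves as $\xi(t) = B(t)\,\xi(t-1)$, so after $T$ steps we have $\xi(T) = B(T)\,B(T-1)\cdots B(1)\,\xi(0) = R(T)\,\xi(0)$, where $R(T)$ is exactly the product defined in \cref{eq:llama}. I would first verify that the update rule of the \avgprocess (Definition of \mone) coincides entry-by-entry with multiplication by the matrix $B(t)$: the chosen node $u$ replaces its value by $\alpha\,\xi_u + \frac{1-\alpha}{k}\sum_{i}\xi_{v_i}$, which is precisely the $u$-th row of $B(t)$ acting on $\xi(t-1)$, while all other rows are the identity.

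Next I would compute $W(T)$ in the \dualprocess when the cost vector is $c = \xi^\top(0)$ and the $n$ initial load vectors are the unit vectors $e^{(u)}$. By the definitions in \cref{sec:dual}, the matrix whose columns are the load vectors starts as the identity matrix (its $u$-th column is $e^{(u)}$), so after $T$ steps it equals $R(T)\,I = R(T)$. Therefore
\begin{equation*}
W(T) = c\,R(T) = \xi^\top(0)\,R(T).
\end{equation*}
The crux of the argument is then the following distributional identity: the random matrix $R(T) = B(T)\cdots B(1)$ has the same law as its reverse-order product $B(1)\cdots B(T)$. This holds because the matrices $B(1),\ldots,B(T)$ are independent and identically distributed (each $B(t)$ depends only on the fresh random choices of $u(t)$ and $S(t)$ at step $t$), so reversing their index order leaves the joint distribution of the tuple $(B(1),\ldots,B(T))$ unchanged, and consequently the distribution of the product is unchanged. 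Hence $\xi^\top(0)\,R(T)$ and $\xi^\top(0)\,B(1)\cdots B(T)$ are equal in distribution.

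It remains to connect this to $\xi(T)$ in the \avgprocess. Since $\xi(T) = R(T)\,\xi(0)$, taking the transpose gives $\xi^\top(T) = \xi^\top(0)\,R(T)^\top$, and I would relate $R(T)^\top$ to the reversed product; combining the i.i.d.\ symmetry above with these algebraic manipulations yields that $W(T)$ and $\xi^\top(T)$ have the same distribution, which is the claimed statement $\Prob(\xi(T) = a) = \Prob(W(T) = a^\top)$ for every $a \in \mathbb{R}^n$. The main obstacle I anticipate is bookkeeping the transposes together with the time-reversal correctly: one must be careful that the \dualprocess applies $B(t)$ on the \emph{left} of the load matrix (so loads accumulate in forward order) while the cost vector sits on the left as a row vector, and to check that the i.i.d.\ reversal exactly compensates for the transpose so that no spurious asymmetry between the forward averaging dynamics and the backward diffusion dynamics survives. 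Once the reversal-invariance of the law of the product is pinned down, the rest is a direct matching of the two expressions.
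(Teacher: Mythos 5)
Your overall strategy -- express both processes as products of i.i.d.\ random matrices and use that reversing the order of an i.i.d.\ product does not change its law -- is exactly the paper's argument (there it is phrased as a coupling on a fixed selection sequence $\newChi$ versus its reversal $\newChi^R$, via \cref{lem:dualitySupportLemma}, plus the observation that $\newChi$ and $\newChi^R$ are equiprobable). However, your first step contains a concrete error that makes the subsequent algebra fail as written. You claim that the \avgprocess evolves as $\xi(t) = B(t)\,\xi(t-1)$ with $B(t)$ the matrix of \cref{eq:pony}, because ``the $u$-th row of $B(t)$'' encodes the update $\alpha\xi_u + \frac{1-\alpha}{k}\sum_i \xi_{v_i}$. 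It does not: by \cref{eq:pony}, the $u(t)$-th \emph{row} of $B(t)$ is $\alpha\, (e^{(u)})^\top$ (the entries $(1-\alpha)/k$ sit in \emph{column} $u(t)$, at rows $i\in S(t)$), so $(B(t)\xi)_u = \alpha\xi_u$, which is the diffusion update, not the averaging update. The correct statement is $\xi(t) = B^\top(t)\,\xi(t-1)$, i.e.\ the averaging matrix is the transpose of the diffusion matrix built from the same node selection. Consequently $\xi(T) = R(T)\,\xi(0)$ is false, and your final step -- relating $R(T)^\top$ to the reversed product -- is left as an unverified ``obstacle'' rather than carried out; as you note yourself, whether the i.i.d.\ reversal compensates for the transpose is precisely the crux, and it cannot be waved through, since $B$ and $B^\top$ have different laws.

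The fix is short and restores your argument to essentially the paper's proof: with $\xi(t) = B^\top(t)\,\xi(t-1)$ one gets $\xi^\top(T) = \xi^\top(0)\, B(1) B(2) \cdots B(T)$, while the \dualprocess gives $W(T) = \xi^\top(0)\, B'(T) \cdots B'(1)$ for an independent i.i.d.\ copy $(B'(t))_t$ of the same matrices. Since $(B'(T),\ldots,B'(1))$ and $(B(1),\ldots,B(T))$ have the same joint law, the two right-hand sides agree in distribution, which is the claim. This is exactly the content of \cref{lem:dualitySupportLemma} (deterministic equality $W(T)=\xi^\top(T)$ under the reversed coupling) combined with the equiprobability of a sequence and its reversal, so once the row/column bookkeeping is corrected your route and the paper's coincide.
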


\begin{proof}
This duality relation follows by coupling the
\avgprocess with the \dualprocess, running one
of them (say the \avgprocess) forward in time 
and the other backwards in time.
More formally, we fix an arbitrary time step
$T\ge 1$ and consider any feasible 
node selection sequence
$\newChi=(\newChi(1), \newChi(2), \dots, \newChi(T))$ for $T$ steps.
That is, for $1\le t \le T$, 
$\newChi(t) = (S(t)), u(t))$, 
where $u(t)$ and $S(t)$ are a node
and a size $k$ sample of its neighbours.
The lemma below shows that if
we run the \avgprocess using sequence $\newChi$  
and the \dualprocess using the reverse sequence 
$\newChi^R$, then $W(T)=\xi^\top(T)$.
The proposition then follows
because the probability of having 
sequences $\newChi$ in the first $T$ steps 
in the \avgprocess is the same as 
the probability of having sequence $\newChi^R$
in the first $T$ steps in the \dualprocess.

Note that running one of the two processes backward
is crucial for establishing this duality. 
If both processes are run forward on the same 
sequence $\newChi$, then most likely 
$W(T) \neq \xi^\top(T)$.
\end{proof}
\begin{lemma}\label{lem:dualitySupportLemma}
For the \avgprocess and the \dualprocess as in
\cref{prop:duality}, 
an arbitrary time step $T\ge 1$, and 
an arbitrary node selection sequence 
$\newChi=(\newChi(1), \newChi(2),$ $ \dots, \newChi(T))$, if we run the \avgprocess using sequence $\newChi$
and the \dualprocess using the reverse sequence 
$\newChi^R$, then we have $W(T)=\xi^\top(T)$.
\end{lemma}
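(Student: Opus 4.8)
The plan is to observe that a single step of the \avgprocess is governed by the \emph{transpose} of the single-step matrix $B(t)$ of the \dualprocess for the same selection, and then to exploit that transposing a matrix product reverses the order of its factors---an order-reversal that is undone exactly by feeding the dual the time-reversed sequence $\newChi^R$.

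First I would fix notation: for a selection $c=(S,u)$ consisting of a node $u$ and a size-$k$ neighbour set $S$, let $B[c]$ be the matrix of \eqref{eq:pony}. Reading off the definition of the \mone, the update $\xi_u(t)=\alpha\,\xi_u(t-1)+\tfrac{1-\alpha}{k}\sum_{v\in S(t)}\xi_v(t-1)$ with all other coordinates left unchanged is precisely $\xi(t)=B[\newChi(t)]^\top\,\xi(t-1)$: row $u$ of $B[c]^\top$ holds $\alpha$ in position $u$ and $(1-\alpha)/k$ in each position of $S$, while every other row is the corresponding unit vector. Iterating over $\newChi=(\newChi(1),\dots,\newChi(T))$ run forward gives
\[
\xi(T)=B[\newChi(T)]^\top B[\newChi(T-1)]^\top\cdots B[\newChi(1)]^\top\,\xi(0),
\]
and transposing yields
\[
\xi^\top(T)=\xi^\top(0)\,B[\newChi(1)]\,B[\newChi(2)]\cdots B[\newChi(T)].
\]

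Next I would compute $W(T)$ for the \dualprocess driven by $\newChi^R=(\newChi(T),\dots,\newChi(1))$, i.e.\ with $\newChi^R(s)=\newChi(T-s+1)$. Its step-$s$ matrix is $B[\newChi^R(s)]=B[\newChi(T-s+1)]$, so by \eqref{eq:llama}
\[
R(T)=B[\newChi^R(T)]\,B[\newChi^R(T-1)]\cdots B[\newChi^R(1)]=B[\newChi(1)]\,B[\newChi(2)]\cdots B[\newChi(T)].
\]
Taking cost vector $c=\xi^\top(0)$ and the $n$ initial load vectors $e^{(u)}$ (which, stacked as columns, form the identity), the setup of \cref{prop:duality} gives $W(T)=c\,R(T)=\xi^\top(0)\,B[\newChi(1)]\cdots B[\newChi(T)]$, which coincides with the expression for $\xi^\top(T)$ above; hence $W(T)=\xi^\top(T)$.

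The only genuine obstacle is the bookkeeping: one must verify that the order-reversal produced by the transpose, $(M_T\cdots M_1)^\top=M_1^\top\cdots M_T^\top$, cancels against the order-reversal produced by running the dual on $\newChi^R$. It is exactly this double reversal that forces one of the two processes to be run backwards in time, and it explains why running both forward on the same $\newChi$ would leave one uncancelled reversal and generically give $W(T)\neq\xi^\top(T)$. I would therefore double-check the index identity $\newChi^R(s)=\newChi(T-s+1)$ and the matching of the $u(t)$ and $S(t)$ roles under transposition, since those are the spots where an index slip could hide.
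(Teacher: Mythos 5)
Your proof is correct and follows essentially the same route as the paper's: both identify the averaging-step matrix as the transpose of the diffusion-step matrix $B$ for the same selection, and both let the order reversal from transposition cancel against the order reversal from running the dual on $\newChi^R$. If anything, your explicit indexing $\newChi^R(s)=\newChi(T-s+1)$ is cleaner than the paper's shorthand $F(t)=B^\top(T-t)$, which has an off-by-one in the index.
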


\begin{proof}
Since for each $1 \le t \le T$, in step $t$ the \avgprocess uses the same
node selection as in the \dualprocess
in step $T+1-t$. Then
$\xi(t) = F(t) \xi(t-1)$, where
$F(t) = B^\top(T-t)$ and thus we have
\begin{align*}
W(T)\; &= \; c\cdot\left(B(T) B(T-1)\ldots B(1)\right)\cdot
\left(e^{(1)}, e^{(2)}, \ldots, e^{(n)}\right)\\
&= \; c\cdot \left(B(T) B(T-1) \ldots B(1)\right)\cdot I\\
&= \; \xi^\top(0)\cdot F^\top(1) F^\top(2) \ldots F^\top(T)\\
&= \; \left(F(T) \ldots F(2) F(1)\cdot \xi(0)\right)^\top \;\; = \; \xi^\top(T).
\end{align*}
\end{proof}

From now on, \dualprocess means 
the diffusion process as specified in 
\cref{prop:duality}.
For an illustrative example of how the \avgprocess and the \dualprocess work and relate to each other, see \cref{fig:coupling}. In there, we fix $\alpha = 1/2$ and $k=1$. For another example with $\alpha=1/2$ and $k> 1$ see \cref{fig:couplingk2}.
%
\begin{figure*}
\centering
\begin{minipage}[t]{.49\textwidth}
    \includegraphics[width=.76\textwidth]{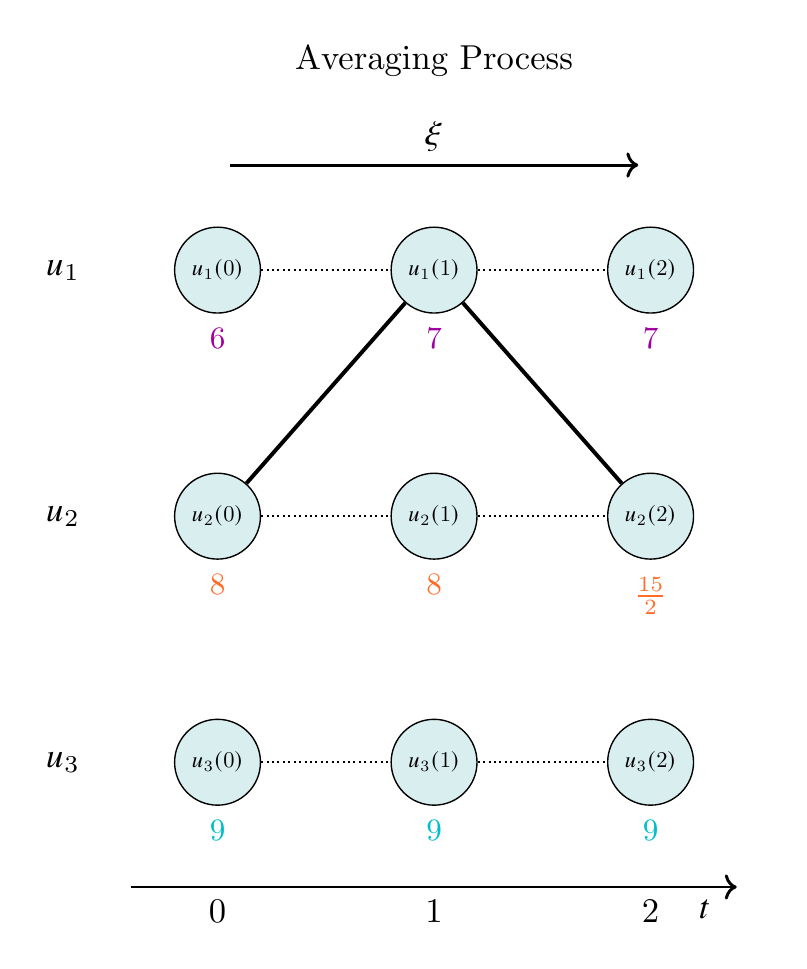}
    \caption*{(a)}
\end{minipage}
\begin{minipage}[t]{.49\textwidth}
    \includegraphics[width=.76\textwidth]{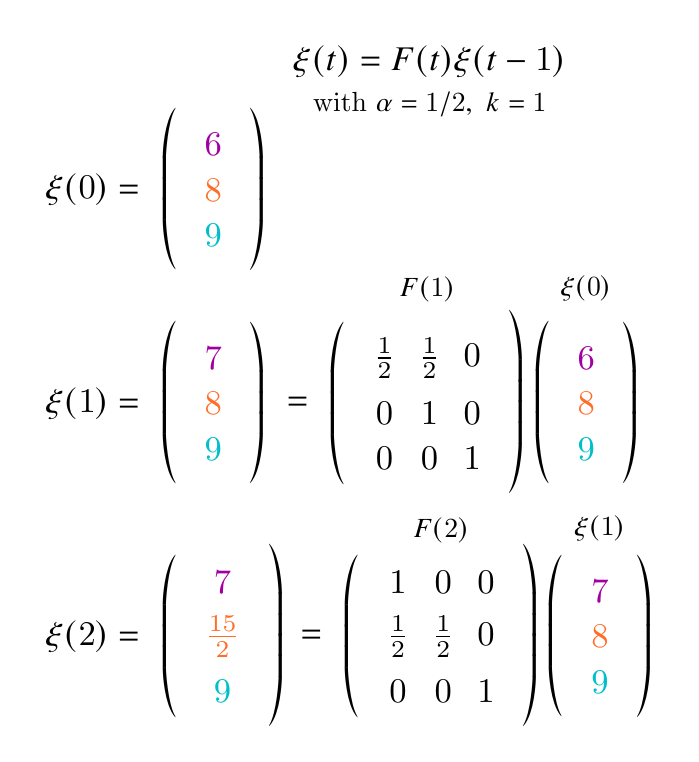}
\end{minipage}
\begin{minipage}[t]{.49\textwidth}
    \includegraphics[width=.76\textwidth]{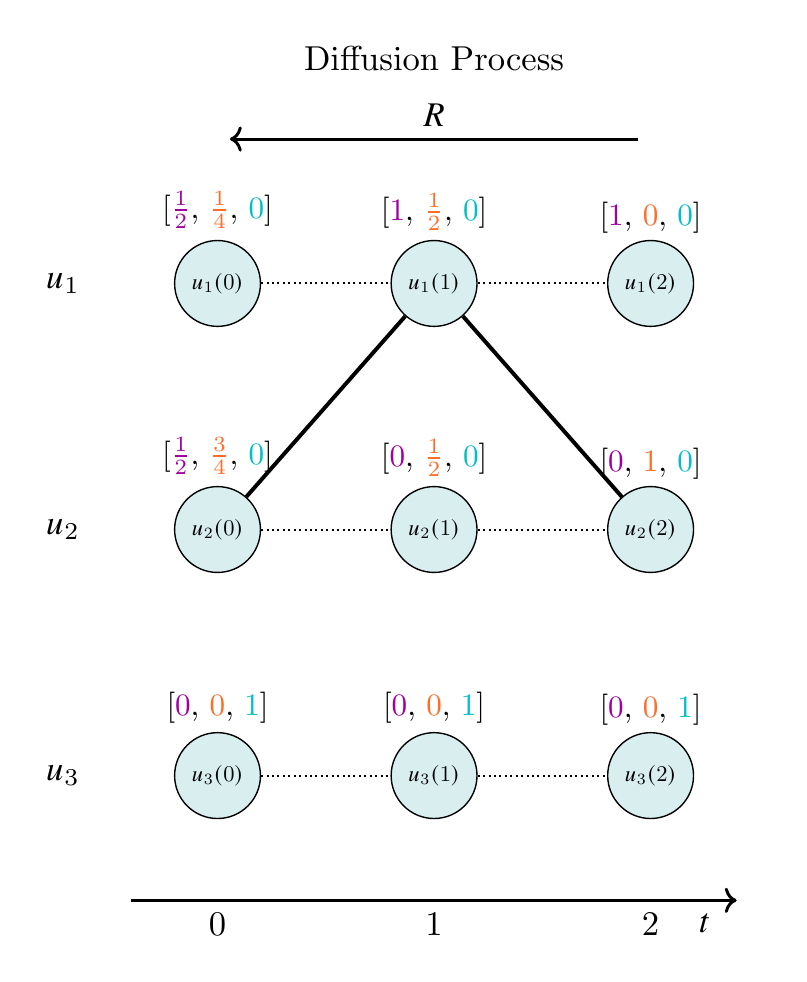}
    \caption*{(b)}
\end{minipage}
\begin{minipage}[t]{.49\textwidth}
    \includegraphics[width=.76\textwidth]{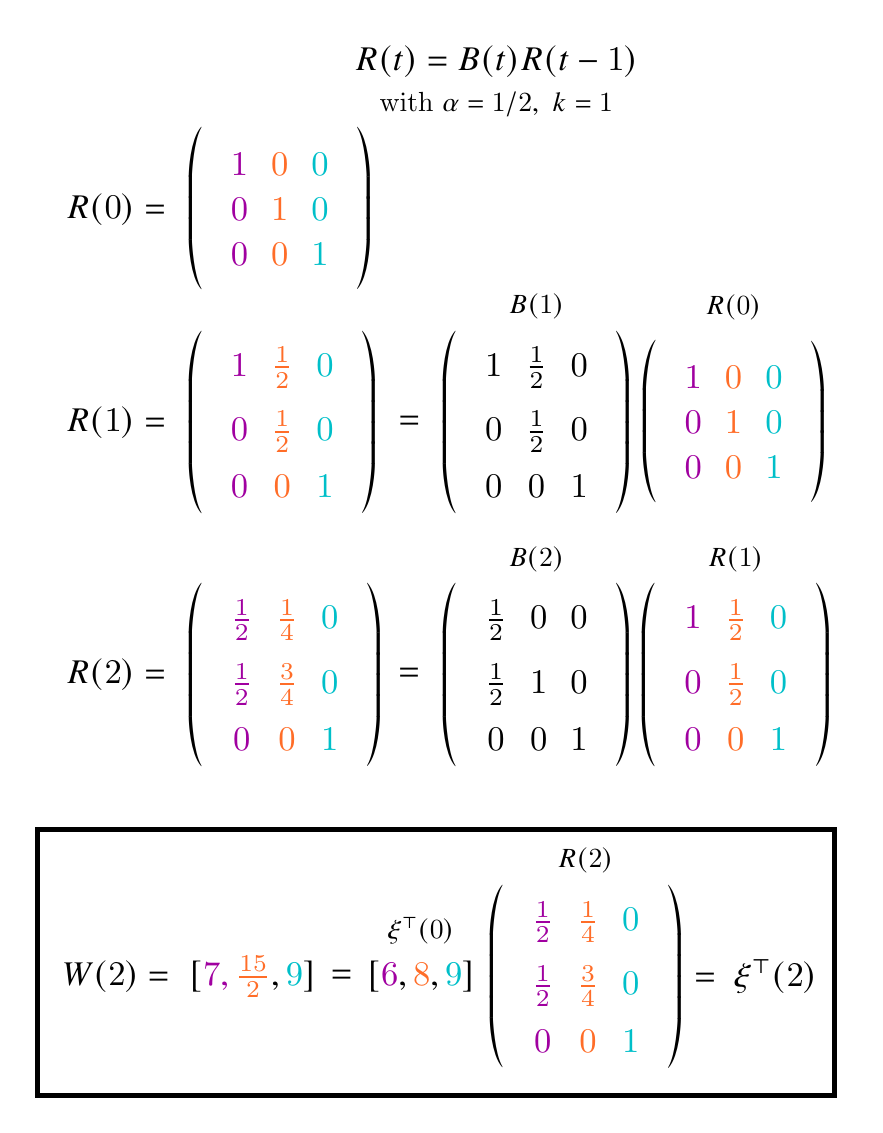}
\end{minipage}
\caption{Illustration of the duality between the \avgprocess (a) and 
the \dualprocess (b), with $k=1$ and $\alpha=1/2$. 
In (a), at $t=1$ node $u_1$ and its neighbour $u_2$ are selected,
the value at $u_1$ is updated and 
the values at $u_2$ and $u_3$ stay the same -- see matrix $F(1)$. At $t=2$, $u_2$ and its neighbour $u_1$ are selected, leading to $\xi(2)$
-- see matrix $F(2)$.
In (b), the \dualprocess runs backwards. The initial state, at $t=2$, of the diffusion starting in $u_2$ is the vector $[{\color{csecond}0,1,0}]$. 
At the first step ($t=2$), $u_2$ sends 
$1-\alpha = 1/2$ of its load to $u_1$. 
The loads in the other nodes do not spread. The resulting load vector is $R_2(1) = [{\color{csecond}1/2, 1/2, 0}]$, the second column of $R(1)$.
After the second step, the load is $R_2(2) = [{\color{csecond}1/4, 3/4, 0}]$. The diffusion of the loads originating 
at nodes $u_1$ and $u_3$ is indicated in {\color{cfirst}\cfirst} and {\color{cthird}\cthird} respectively.
We get $W(2) = \xi^\top(2)$.}
\label{fig:coupling}
\end{figure*}
\subsection{The \crwprocess}\label{sec:CorrelatedRWs}
%
%
\sloppy With the \dualprocess we associate $n$ random walks 
$\left(\tilde{q}^{(u)}(t)\right)_{t\ge 0}$, for $u\in V$.
At step $t$, 
the position of walk $u$ is
$\tilde{q}^{(u)}(t)\in \{e^{(1)}, e^{(2)}, \dots, e^{(n)}\}$, where the `$1$' in vector $\tilde{q}^{(u)}(t)$
indicates the node where the walk is at 
this step. 
The cost of this walk at step $t$ is defined as
$\widetilde{W}^{(u)}(t)=\xi(0) \cdot \tilde{q}^{(u)}(t)$, from the \avgprocess.
Thus, if the walk is at a node $v\in V$,
its cost is $\widetilde{W}^{(u)}(t) = \xi_v(0)$.
The starting node of this walk is node $u$, 
that is, $\tilde{q}^{(u)}(0) = e^{(u)}$.
%
 %
 At step $t$, the transition matrix for all $n$ random walks is matrix $B(t)$ of the \dualprocess given in \cref{eq:pony}. 
 That is, given the location 
 of random walk $u$ at step $t-1$,
 represented by vector 
 $\tilde{q}^{(u)}(t-1)$, and fixing 
 the transition matrix $B(t)$, the distribution of the location of this walk 
 at step $t$
 is given by 
 $B(t) \tilde{q}^{(u)}(t-1) = R(t) e^{(u)}$.
Hence, if $B(t)$ represents choosing 
node $v$ and its $k$ neighbours $w_1, w_2, \ldots, w_k$, and if the walk happens to be at node $v$ at step $t-1$, then in step $t$ the walk moves to node~$w_i$ with probability $(1-\alpha)/k$,
for $i=1,2,\ldots,k$.
Otherwise, the walk does not move in step $t$.
In what follows, `the random walk starting at node $u$' means the random walk defined here.
%
%
The relation between the random walks defined above and the \dualprocess is akin to the relation between the standard random walk and the standard diffusion process.\footnote{In the standard random walk process the walk moves to all neighbors with equal probability and in the standard
diffusion process, each node sends the same fraction of load to all its neighbors synchronously starting with load one at one node and load zero elsewhere.} 
The former describes the  location of the random walk at a given time (the walk can only be at one position at any time), while
the latter gives the distribution of the random walk at any given time step. 
\begin{lemma}\label{lem:crwVSdiffusion}
For any fixed sequence of transition metrices
$\newChi = (B(1), B(2), \ldots, B(t))$,
for the random walk starting at node $u$, we have (recall the definition if $R(t)$ in~\eqref{eq:llama}),
\begin{equation}\label{eq:relationship}
\E{[\tilde{q}^{(u)}(t)\; |\; \newChi]} =R(t) e^{(u)},
\end{equation}
\begin{equation}\label{eq:relationship2}
\E{[\widetilde{W}^{(u)}(t)\; |\; \newChi]} = W^{(u)}(t)  , 
\end{equation}
\end{lemma}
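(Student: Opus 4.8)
The plan is to prove both identities by exploiting the fact that the matrix $B(t)$ defined in \eqref{eq:pony} plays a dual role: it is simultaneously the deterministic diffusion operator of the \dualprocess and the one-step transition matrix of the random walk $\tilde{q}^{(u)}$. The natural starting point is to verify that each $B(t)$ is column-stochastic. Indeed, for a column $j \neq u(t)$ the only nonzero entry is $B_{j,j}(t)=1$, while for $j=u(t)$ the column has entry $\alpha$ on the diagonal together with $k$ entries equal to $(1-\alpha)/k$, summing to $\alpha+(1-\alpha)=1$. Hence every column of $B(t)$ is a genuine probability distribution, which is exactly what makes the walk's transition rule well defined.

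Conditioning on $\newChi$ fixes all matrices $B(1),\dots,B(t)$, so the only remaining randomness is the walk's own coin flips; given $\newChi$ the process $(\tilde{q}^{(u)}(s))_{s\ge0}$ is a time-inhomogeneous Markov chain with transition matrices $B(s)$. The crucial one-step identity I would establish is that, because $\tilde{q}^{(u)}(t)$ is the indicator (point-mass) vector of the walk's position, its conditional expectation given the previous position is precisely the corresponding column of $B(t)$:
\[
\E\!\left[\tilde{q}^{(u)}(t)\mid \newChi,\ \tilde{q}^{(u)}(t-1)\right] = B(t)\,\tilde{q}^{(u)}(t-1).
\]
Taking a further expectation over $\tilde{q}^{(u)}(t-1)$ and using linearity (the matrix $B(t)$ is deterministic given $\newChi$) yields the recursion $\E[\tilde{q}^{(u)}(t)\mid\newChi] = B(t)\,\E[\tilde{q}^{(u)}(t-1)\mid\newChi]$.

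The first identity \eqref{eq:relationship} then follows by induction on $t$: the base case is $\E[\tilde{q}^{(u)}(0)\mid\newChi]=e^{(u)}=R(0)e^{(u)}$ with $R(0)=I$, and the inductive step combines the recursion with $R(t)=B(t)R(t-1)$ from \eqref{eq:llama} to give $\E[\tilde{q}^{(u)}(t)\mid\newChi]=B(t)R(t-1)e^{(u)}=R(t)e^{(u)}$. For the second identity \eqref{eq:relationship2}, I would simply apply linearity of expectation to the deterministic cost (row) vector $c=\xi^\top(0)$: since $\widetilde{W}^{(u)}(t)=\xi(0)\cdot\tilde{q}^{(u)}(t)$, we obtain $\E[\widetilde{W}^{(u)}(t)\mid\newChi]=c\,\E[\tilde{q}^{(u)}(t)\mid\newChi]=c\,R(t)e^{(u)}=W^{(u)}(t)$.

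The argument is essentially routine once it is set up correctly, so there is no deep obstacle; the only point requiring genuine care is the one-step conditional-expectation identity, where one must justify that averaging the random indicator vector reproduces the deterministic column of $B(t)$, and this is exactly where column-stochasticity of $B(t)$ enters. The conceptual content is the standard correspondence between a random walk (tracking a single location) and a diffusion (tracking the whole distribution), transported here to the time-inhomogeneous setting obtained after conditioning on $\newChi$.
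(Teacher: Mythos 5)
Your proof is correct and follows essentially the same route as the paper: the paper likewise identifies $\E[\tilde{q}^{(u)}(t)\mid\newChi]$ with the walk's distribution $R(t)e^{(u)}$ (treating this as immediate from the definition of the walk via the transition matrices $B(s)$) and then obtains \eqref{eq:relationship2} by linearity of expectation applied to the deterministic cost vector. The only difference is that you make explicit, via column-stochasticity of $B(t)$ and an induction using $R(t)=B(t)R(t-1)$, the step the paper states without proof.
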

\label{comm:dimensions}
\begin{proof}
Given $\newChi$, 
the vector $R(t) e^{(u)}$ is the distribution at step $t$ of the random walk starting at node $u$.
This means that for $x\in V$, the expectation of 
$\tilde{q}^{(u)}_x(t) \in \{0,1\}$ is equal to entry $x$ 
in $R(t) e^{(u)}$, which gives the 
probability that at step $t$ the walk is
at node $x$ 
(the probability that $\tilde{q}^{(u)}_x(t) =1$).
Hence \cref{eq:relationship}.\footnote{%
$\E[(X_1,X_2, \ldots, X_n)]$ is defined as 
$(\E[X_1], \E[X_2], \ldots, \E[X_n])$.}
For \cref{eq:relationship2}, by linearity of expectation,
\begin{align*}
    \E{[\widetilde{W}^{(u)}(t)\; |\; \newChi]} 
    &= \E{[\xi^\top \tilde{q}^{(u)}(t)\; |\; \newChi]} \\
    &= \xi^\top \cdot\E{[\tilde{q}^{(u)}(t)\; |\; \newChi]}
    = \xi^\top \, R(t) \, e^{(u)}\\
    &= W(t) e^{(u)} = W^{(u)}(t).
\end{align*}
where $\xi = \xi(0)$.
\end{proof}

We refer collectively to the $n$ random walks defined in this section as \crwprocess
$\widetilde{W}(t)$.
These $n$ random walks are correlated since 
they use the same random choices of nodes and neighbours sampled, that is, the same transition matrices $B(t)$.
\begin{proposition}\label{prop:variance_both_processes1} For the \dualprocess $W(t)$ and the \crwprocess $\widetilde{W}(t)$, we have
\begin{align}
\E[\widetilde{W}^{(u)}(t) \widetilde{W}^{(v)}(t)] = \E[W^{(u)}(t) W^{(v)}(t)]
\end{align}
\end{proposition}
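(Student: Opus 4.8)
The plan is to condition on the random environment $\newChi = (B(1), B(2), \ldots, B(t))$ and then apply the law of total expectation. The starting point is that, given $\newChi$, the diffusion cost $W^{(u)}(t) = \xi^\top R(t)\, e^{(u)}$ is a deterministic quantity, whereas the walk cost $\widetilde{W}^{(u)}(t) = \xi^\top \tilde{q}^{(u)}(t)$ still fluctuates according to the walk's own coin flips. \cref{lem:crwVSdiffusion}, and specifically \cref{eq:relationship2}, already identifies the conditional mean of the latter with the former, namely $\E[\widetilde{W}^{(u)}(t)\mid \newChi] = W^{(u)}(t)$, and likewise with $v$ in place of $u$.

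The key structural fact I would establish is that, conditioned on $\newChi$, the two walks started at $u$ and at $v$ evolve \emph{independently}. The only coupling among the $n$ walks is through the shared transition matrices $B(1), \ldots, B(t)$: once these are fixed, each walk decides on its own, at every step, whether to move and to which of the sampled neighbours. In particular, even when both walks happen to sit at the currently selected node, they draw their next positions using independent randomness. Consequently, conditioned on $\newChi$, the vectors $\tilde{q}^{(u)}(t)$ and $\tilde{q}^{(v)}(t)$ are independent, and hence so are the scalar costs $\widetilde{W}^{(u)}(t)$ and $\widetilde{W}^{(v)}(t)$.

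Granting this, the computation is short. By the tower property and conditional independence,
\begin{align*}
\E[\widetilde{W}^{(u)}(t)\,\widetilde{W}^{(v)}(t)]
&= \E\!\left[\,\E[\widetilde{W}^{(u)}(t)\,\widetilde{W}^{(v)}(t)\mid \newChi]\,\right]\\
&= \E\!\left[\,\E[\widetilde{W}^{(u)}(t)\mid \newChi]\cdot \E[\widetilde{W}^{(v)}(t)\mid \newChi]\,\right]\\
&= \E\!\left[\,W^{(u)}(t)\,W^{(v)}(t)\,\right],
\end{align*}
where the last line applies \cref{eq:relationship2} to each factor. Since the final expression is exactly $\E[W^{(u)}(t)\,W^{(v)}(t)]$, the proposition follows.

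The main obstacle is the second equality above: rigorously justifying the conditional independence (or at least the conditional uncorrelatedness) of the two walks given $\newChi$. This is precisely where the semantics of the \crwprocess must be made explicit — one has to check that co-located walks use fresh, independent randomness to select their moves, rather than being forced along the same sampled edge. Once the joint conditional law of the walks is seen to factor into its marginals, whose means are supplied by \cref{lem:crwVSdiffusion}, the remaining steps are immediate applications of linearity and the tower property.
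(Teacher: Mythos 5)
Your proposal is correct and follows essentially the same route as the paper's proof: condition on the sequence $\newChi$ of transition matrices, use the conditional independence of the two walks given $\newChi$ to factor the conditional expectation of the product, identify each factor with $W^{(u)}(t)$ and $W^{(v)}(t)$ via \cref{lem:crwVSdiffusion}, and conclude by the tower property. The only cosmetic difference is that the paper factors at the level of the outer product $\E[\tilde q^{(u)}(t)\,\tilde q^{(v)\top}(t)\mid\newChi]$ while you factor the scalar costs directly, and you correctly flag the one point the paper asserts without elaboration, namely that co-located walks use fresh independent randomness once $\newChi$ is fixed.
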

\begin{proof}
%
It is sufficient to prove
that for any fixed sequence $\newChi$ of 
transition matrices,
\begin{align}
\E[\widetilde{W}^{(u)}(t) \widetilde{W}^{(v)}(t)\,|\,\newChi] = W^{(u)}(t) W^{(v)}(t).
\end{align}
Note that if $\newChi$ is fixed, then there is no randomness on the right-hand side of the above. By writing $\xi$ instead of $\xi(0)$, we have

\begin{align}
\E[\widetilde{W}^{(u)}(t) \widetilde{W}^{(v)}(t)\,|\,\newChi] &= \E[ \xi^\top \tilde q^{(u)}(t) \cdot \tilde{q}^{(v)\top}(t) \xi \,|\,\newChi] \nonumber\\
&=\xi^\top \cdot \E[  \tilde q^{(u)}(t) \cdot  \tilde{q}^{(v)\top}(t) \, |\,\newChi] \cdot \xi \label{njkewecni-1} \\ 
&= 
\xi^\top \cdot\E[ \tilde q^{(u)}(t)\,|\,\newChi] \cdot \E[ \tilde{q}^{(v)\top}(t) \,|\,\newChi] \cdot \xi
\label{njkewecni-2} \\
&= 
\left(\xi^\top  R(t) \indi{u}\right) \cdot \left(\indi{v}^\top R^\top(t) \xi\right)
\label{njkewecni-3} \\
&={W^{(u)}}(t) \cdot W^{(v)}(t),\nonumber
\end{align}
where~\eqref{njkewecni-1} is by linearity of expectation,
\eqref{njkewecni-2} is by independence of the random walks 
$\tilde q^{(u)}(t)$ and $\tilde q^{(v)}(t)$
(once the sequence $\newChi$ is fixed), and
\eqref{njkewecni-3} follows from \cref{eq:relationship}.
%
%
\end{proof}
\subsection{Joint Distribution of Two Random Walks}\label{sec:mu}

In this section we consider two correlated random walks $\tilde q^{(a)}(t)$ and 
$\tilde q^{(b)}(t)$, $a,b\in V$,
$a\neq b$. Respectively, we denote by $X(t)$ and $Y(t)$ the nodes where these walks are at step $t$.
The random walks proceed through the \crwprocess described in the previous section, resulting in a joint transition matrix $Q$:
\begin{eqnarray*}
    \lefteqn{Q((x,y),(u,v))} \\
    & = & \Prob((X(t+1),Y(t+1))=(u,v)\;|\;(X(t),Y(t))=(x,y)),
\end{eqnarray*}
where $(x,y), (u,v) \in V\times V$.
The $Q$ chain defined by this transition matrix 
is irreducible (each state $(u,v)$ is reachable form each state $(x,y)$) and aperiodic 
($Q(s,s) > 0$, for each $s\in V\times V$),
so it has a unique stationary distribution.
We use $\pm f(n)$ to denote a term $cf(n)$ 
where $|c| \le 1$.
\begin{lemma}\label{lem:crwTOstationarydistr}
For the stationary distribution $\mu$ of the $Q$ chain defined above and sufficiently large $T$, we have, 
\begin{equation}\label{eq:mixingTime}
    \E[\widetilde{W}^{(a)}(T) \widetilde{W}^{(b)}(T)] = \sum_{u,v}\mu(u,v)\xi_u(0)\xi_v(0) \; \pm \; \frac{1}{n^5}.
\end{equation}
\end{lemma}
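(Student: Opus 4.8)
The plan is to reduce the statement to a standard convergence-to-stationarity estimate for the $Q$ chain. First I would rewrite the left-hand side explicitly. Since $\widetilde{W}^{(a)}(T) = \xi_{X(T)}(0)$ and $\widetilde{W}^{(b)}(T) = \xi_{Y(T)}(0)$ by the definition of the \crwprocess, and since $(X(T),Y(T))$ is exactly the state at step $T$ of the $Q$ chain started from $(a,b)$, we have
\[
\E[\widetilde{W}^{(a)}(T)\widetilde{W}^{(b)}(T)] = \sum_{u,v} p_T(u,v)\,\xi_u(0)\xi_v(0),
\]
where $p_T(u,v) = \Prob((X(T),Y(T))=(u,v))$. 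The target sum $\sum_{u,v}\mu(u,v)\xi_u(0)\xi_v(0)$ is the same linear functional evaluated at the stationary distribution, so the quantity to control is
\[
\Big|\textstyle\sum_{u,v}\big(p_T(u,v)-\mu(u,v)\big)\xi_u(0)\xi_v(0)\Big|.
\]

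Next I would bound this difference by separating the values from the probabilities. Since $|\sum_{u,v}(p_T(u,v)-\mu(u,v))\xi_u(0)\xi_v(0)| \le \sum_{u,v}|p_T(u,v)-\mu(u,v)|\,|\xi_u(0)|\,|\xi_v(0)|$, pulling out the largest initial value shows the error is at most $\big(\max_w|\xi_w(0)|\big)^2 \sum_{u,v}|p_T(u,v)-\mu(u,v)|$, i.e.\ twice the total variation distance between $p_T$ and $\mu$ scaled by $\big(\max_w|\xi_w(0)|\big)^2$.

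Finally I would invoke the convergence of the $Q$ chain. The excerpt already establishes that $Q$ is irreducible and aperiodic on the finite state space $V\times V$, hence it admits the unique stationary distribution $\mu$ and $p_T\to\mu$ in total variation as $T\to\infty$, geometrically at a rate governed by the second-largest eigenvalue modulus of $Q$. Consequently, once $T$ is large enough that $\sum_{u,v}|p_T(u,v)-\mu(u,v)| \le n^{-5}\big(\max_w|\xi_w(0)|\big)^{-2}$, the error is at most $1/n^5$, which is precisely the claimed $\pm\,1/n^5$.

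The main obstacle is making ``sufficiently large $T$'' quantitative: turning the qualitative geometric convergence into an explicit threshold requires a bound on the mixing time (spectral gap) of the two-walk chain $Q$. This gap is not simply that of a single walk, because the two walks are coupled through the shared node/neighbour selection encoded in $B(t)$ and move independently only conditional on that selection, so $Q$ is not a product chain. For the present statement the qualitative convergence suffices; if a concrete $T$ is needed downstream, one would bound the relaxation time of $Q$ by $\mathrm{poly}(n)$ and absorb the logarithmic dependence on $\max_w|\xi_w(0)|$ and $n$ into the threshold.
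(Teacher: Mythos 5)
Your proposal is correct and follows essentially the same route as the paper: both identify $\E[\widetilde{W}^{(a)}(T)\widetilde{W}^{(b)}(T)]$ with $\sum_{u,v}Q^T((a,b),(u,v))\,\xi_u(0)\xi_v(0)$, invoke irreducibility and aperiodicity of the finite $Q$ chain to get convergence to $\mu$, and absorb the resulting error (bounded by the distance to stationarity times the squared magnitude of the initial values) into the $\pm 1/n^5$ term. The paper phrases the mixing requirement entrywise, demanding $|\mu(u,v)-Q^T((a,b),(u,v))|\le 1/(K^2n^7)$ with $K$ the initial discrepancy, whereas you use total variation distance scaled by $\max_w|\xi_w(0)|^2$; these are interchangeable, and both treatments leave the quantitative mixing-time threshold implicit.
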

\begin{proof}
%
Let $T$ be the mixing time of $Q$ such that

$\left|\mu(u,v) - Q^T((a,b), (u,v))\right| \leq \frac{1}{K^2n^7}$, for each $(u,v)\in V\times V$,
where $K$ is the initial discrepancy. 
%
Now, let's write $\xi$ instead of $\xi(0)$, then
\begin{align*}
\E\left[\,\widetilde{W}^{(a)}(T)\right. & \left.\widetilde{W}^{(b)}(T)\,\right] \\
&\asterisk
\E\left[\xi^\top\, \tilde q^{(a)}(T)\, \left(\tilde{q}^{(b)}\right)^\top(T)\, \xi \right] \\ 
 &= \sum_{u,v} 
 \Pr\left(\tilde q^{(a)}(T)=\indi{u},\,
 \tilde q^{(b)}(T)=\indi{v}\right)\xi_u\xi_v\\
 &=\sum_{u, v} Q^T((a,b),(u,v))\xi_u\xi_v\\
 &=\sum_{u,v}\left(\mu(u,v)\pm \frac{1}{K^2n^7}\right)\xi_u\xi_v\\
&=\sum_{u,v}\mu(u,v)\xi_u\xi_v \; \pm \; \frac{1}{n^5},
\end{align*}
where $(^*)$ follows from the definition of \crwprocess.

%
%
\end{proof}
We calculate the entries of matrix $Q$ and
find its exact stationary distribution.
%
%
From our setting, there are 3 different types of transitions: neither walk leaves its current node (a self-loop), only one walk moves, or both walks move. 
If the walks are in the same node, then 
they can both travel to the same node or 
to two different nodes (or one or both could stay in their current node). The transition where both walks are moving requires that they be in the same node.
\allowdisplaybreaks
\paragraph{Case1: Both walks are at the same node $x$} Then, for some nodes $u$ and $v$ s.t. 
$x\neq u \neq v \neq x$,
\begin{align}
Q((x,x),(u,v))&=  (1-\alpha)^2 \pi_x  \frac{k}{d}\frac{k-1}{d -1}\frac{1}{k^2} = (1-\alpha)^2 \pi_x  \frac{k-1}{kd(d-1)},
\label{eq:matrixQ2_1}\\
Q((x,x),(u,u))&=  (1-\alpha)^2 \pi_x  \frac{k}{d}\frac{1}{k^2} = (1-\alpha)^2 \pi_x  \frac{1}{kd}, \label{eq:matrixQ2_2}\\
Q((x,x),(x,u))&= \alpha(1-\alpha) \pi_x \frac{1}{d}, \label{eq:matrixQ2_3}\\
Q((x,x),(u,x))&= \alpha(1-\alpha) \pi_x \frac{1}{d}, \label{eq:matrixQ2_4}\\
Q((x,x),(x,x))&= \alpha^2 \pi_x  + (1-\pi_x).\label{eq:matrixQ2_5}
\end{align}
We explain the meaning of \cref{eq:matrixQ2_1}, the other equations follow in a similar manner. If both walks are in $x$, then for them to have a chance of moving, we need to sample node $x$ first. This happens with probability $\pi_x$. 
The probability of both walks moving away from $x$ is equal to $(1 -\alpha)^2$. 
The first walk goes to $u$ and the second to $v$, if both $u$ and $v$ are in the selected $k$-sample of neighbours of $x$ -- probability $\frac{k(k-1)}{d(d-1)}$ -- and then 
the first walk chooses $u$ and the second one chooses $v$ -- probability~$\frac{1}{k^2}$.
\paragraph{Case2: The walks are on two different nodes $x \neq y$} Then, for a node $v \neq y$ and a node $u \neq x$,
\begin{align}
Q((x,y),(x,v))&= (1-\alpha) \pi_y \frac{1}{d} \label{eq:matrixQ3_1} \\
Q((x,y),(u,y))&= (1-\alpha) \pi_x \frac{1}{d} \label{eq:matrixQ3_2}\\
Q((x,y),(x,y))&= (1-\pi_x - \pi_y)  + (\pi_x + \pi_y)\alpha \label{eq:matrixQ3_3}
\end{align}
All other transition probabilities are 0. The cases above hold for any graph. In the case of a d-regular graph, then we have that $P(x, u) = \frac{1}{d} \ \forall u \in N(x) $.
Note that $Q$ is not reversible, however a stationary distribution is still possible to find, since the chain is irreducible and positive recurrent. For an example explaining why $Q$ is not reversible, see the proof of \cref{prop:stationaryDistrib}.
%
\extend{In the case of the $\mtwo$, the elements of $Q$ are slightly different. There, we are looking at general graphs (hence, even irregular ones), meaning that $1/d$ would turn into $1/d_x$ for any node $x \in V$. Moreover, the probability of sampling a node $x$, $\pi_x$, would change from $1/n$ to $d_x/(2m)$.}

Next, we define three sets of different types of states of the $Q$-chain and give the formula
for the stationary distribution of this chain.

\begin{definition} \label{def:threeSets}
Consider a $d$-regular graph $G=(V,E)$ on $n$ nodes, and for $u,v\in V$, 
let $dis(u,v)$ denote the length of a shortest path between $u$ and $v$ in $G$.
For $i \ge 0$, we define sets
$S_i=\{ (u,v) \mid u,v \in V, dis(u,v)=i \}$
as subsets of the state space $V\times V$ of the $Q$-chain. 
We also define $S_+ = \bigcup_{i \geq 2} S_i$.
\end{definition}


\begin{lemma} \label{prop:stationaryDistrib}
Consider the $\mone$ with parameters $\alpha\in (0,1)$ and $k\ge 1$.
The stationary distribution $\mu$ of the $Q$-chain is comprised of exactly three different values: 
for a state $(u,v) \in V\times V$, 
%
\begin{align} \label{eq:threeSets}
\mu(u,v) = \begin{cases}
\mu_0(n,d,k,\alpha)=2k(d-1)\ell & \text{if } (u,v) \in S_0\\
\mu_1(n,d,k,\alpha)=(d-1)\gamma \ell & \text{if } (u,v) \in S_1  \\
\mu_+(n,d,k,\alpha)=(d\gamma-2\alpha k)\ell& \text{if } (u,v) \in S_+
\end{cases}
\end{align}
with $\gamma = k(1+\alpha) - (1-\alpha)$, and $\ell = \frac{1}{{n (n(d\gamma-2\alpha k) + 2(1-\alpha)(d-k))}}$.

\end{lemma}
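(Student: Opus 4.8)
The plan is to verify directly that the proposed three-valued vector $\mu$ satisfies the global balance equations $\mu Q = \mu$ and is normalised. Since the $Q$-chain is irreducible and aperiodic on the finite state space $V\times V$, its stationary distribution is unique, so exhibiting one such $\mu$ suffices. As $G$ is $d$-regular and the \mone samples each node with probability $1/n$, I substitute $\pi_x = 1/n$ into the transition probabilities \eqref{eq:matrixQ2_1}--\eqref{eq:matrixQ3_3} and work throughout with the three unknowns $\mu_0,\mu_1,\mu_+$.

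First I would fix a target state $(u,v)$ and enumerate its incoming transitions, which come in four flavours: the self-loop at $(u,v)$; a single walk stepping onto $u$ (resp.\ $v$) from a neighbour of an \emph{off-diagonal} source; a single walk stepping out of a \emph{diagonal} source $(u,u)$ or $(v,v)$; and---only when $u$ and $v$ share a common neighbour $w$---both walks stepping simultaneously out of $(w,w)$. I then write the balance equation $\mu(u,v) = \sum_{(x,y)}\mu(x,y)\,Q((x,y),(u,v))$ for each of the three cases $(u,v)\in S_0, S_1, S_+$, grouping the incoming terms according to the distance class (and hence the $\mu$-value) of each source.

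The main obstacle is that for a target in $S_1$ or $S_+$ the incoming sum depends on $t$, the number of common neighbours of $u$ and $v$, which varies from edge to edge in a general regular graph. The crux of the argument is to show that when the incoming terms are collected, the coefficient of $t$ vanishes identically: the contribution of the two single moves that change the pairwise distance, $2(1-\alpha)\frac{1}{nd}(\mu_1-\mu_+)$, exactly cancels the contribution of the simultaneous double move out of the $t$ common-neighbour diagonals, $(1-\alpha)^2\frac{k-1}{nkd(d-1)}\mu_0$. Imposing this cancellation gives one linear relation among $\mu_0,\mu_1,\mu_+$, and it is precisely this relation that makes a \emph{graph-independent} three-valued stationary distribution possible. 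Once $t$ drops out, the $S_+$ equation becomes a tautology, while the $S_0$ equation and the (now $t$-free) $S_1$ equation give the two further relations $\mu_0\gamma = 2k\mu_1$ and $d\mu_1 = (d-1)\mu_+ + \alpha\mu_0$.

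Finally I would solve this homogeneous system for the ratios $\mu_0 : \mu_1 : \mu_+$, obtaining $\mu_0 = 2k(d-1)$, $\mu_1 = (d-1)\gamma$, $\mu_+ = d\gamma - 2\alpha k$ up to a common scale, and then fix the scale by normalisation. Using $|S_0| = n$, $|S_1| = nd$ and $|S_+| = n(n-1-d)$, the condition $n\mu_0 + nd\,\mu_1 + n(n-1-d)\mu_+ = 1$ collapses, after routine algebra with $\gamma = k(1+\alpha)-(1-\alpha)$, to $\ell^{-1} = n\big(n(d\gamma - 2\alpha k) + 2(1-\alpha)(d-k)\big)$, matching the stated $\ell$. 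Beyond this bookkeeping, the two points needing care are (i) not to omit or double-count the single-walk moves originating at the diagonals $(u,u)$ and $(v,v)$, which are exactly what produce the $\alpha\mu_0$ term in the $S_1$ equation, and (ii) to confirm positivity of the three values so that $\mu$ is a genuine probability distribution.
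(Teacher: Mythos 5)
Your proposal is correct and follows essentially the same route as the paper: both verify the candidate three-valued $\mu$ against the global balance equations by enumerating incoming transitions per distance class, both hinge on the observation that the coefficient of the common-neighbour count cancels (your relation $2(1-\alpha)\tfrac{1}{nd}(\mu_1-\mu_+)+(1-\alpha)^2\tfrac{k-1}{nkd(d-1)}\mu_0=0$ is the paper's condition $\tfrac{(1-\alpha)(k-1)}{k(d-1)}\mu_0+2\mu_1-2\mu_+=0$ up to the common factor), and both close with the normalisation $n\mu_0+nd\,\mu_1+n(n-1-d)\mu_+=1$. The only difference is expository: you derive the ratios from the reduced linear system, while the paper substitutes the claimed values and checks the equations hold.
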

All missing proofs can be found in the Appendix.
\subsection{Proof of \texorpdfstring{\cref{thm:mone}(2)}{}: Concentration of the Convergence Value}
\balance
We present here the following proposition, which we use to show the second part of \cref{thm:mone}.
\begin{proposition}\label{thm:limitingVarExact1}
Consider $\mone$ with parameters $\alpha\in (0,1)$ and $k\ge 1$. W.l.o.g.  we assume that  $\avg(0)=0$.
Let $E^+ = \{ (u,v)  | \{u,v\}\in E\}$ be the set of directed edges in the underlying graph.
Then, for any $t\ge 0$,
\begin{align}\label{eq:concentrationFirst1}
 &\var(\avg(t)) \leq \notag\\
%
&\leq(\mu_0-\mu_+) \sum_{u \in V} \xi_u^2(0) + (\mu_1-\mu_+) \sum_{(u,v) \in E^+} \xi_u(0)\xi_v(0) + 1/n^5,
\end{align}
Furthermore, there exist a $T$ large enough such that for all $t\geq T$
\begin{align}\label{eq:concentrationFirst2}
 &\var(\avg(t)) \geq \notag\\
%
&\geq(\mu_0-\mu_+) \sum_{u \in V} \xi_u^2(0) + (\mu_1-\mu_+) \sum_{(u,v) \in E^+} \xi_u(0)\xi_v(0) - 1/n^5,
\end{align}

\end{proposition}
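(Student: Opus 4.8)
The plan is to write $\var(\avg(t))$ as a weighted sum of pairwise correlations $\E[\xi_u(t)\xi_v(t)]$, convert each correlation into the two-walk language via the duality already established, and then exploit that the $Q$-chain relaxes to $\mu$ from \emph{every} starting pair. Since $G$ is regular we have $\pi_u = 1/n$ and $\avg(t)=M(t)$, and by \cref{lem:NodeMartingale} together with the centering assumption $\avg(0)=0$ we get $\E[\avg(t)]=M(0)=0$. Hence $\var(\avg(t)) = \E[M(t)^2] = \sum_{u,v}\pi_u\pi_v\,\E[\xi_u(t)\xi_v(t)]$, so it suffices to control each correlation $\E[\xi_u(t)\xi_v(t)]$.

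First I would relate these correlations to the \crwprocess. By \cref{prop:duality} the joint law of $(\xi_u(t),\xi_v(t))$ equals that of $(W^{(u)}(t),W^{(v)}(t))$, and by the argument of \cref{prop:variance_both_processes1} we obtain $\E[\xi_u(t)\xi_v(t)] = \E[\widetilde{W}^{(u)}(t)\widetilde{W}^{(v)}(t)]$, where $\widetilde{W}^{(u)},\widetilde{W}^{(v)}$ are two walks driven by the common matrices $B(t)$ but moving independently once the sequence $\newChi$ is fixed. The one delicate point is the diagonal $u=v$: there I would read $\widetilde{W}^{(u)},\widetilde{W}^{(v)}$ as two distinct copies of the walk started at the same node (independent given $\newChi$), so that conditioning on $\newChi$ and applying \cref{eq:relationship2} twice gives $\E[\widetilde{W}^{(u)}(t)\widetilde{W}^{(v)}(t)\mid\newChi] = (W^{(u)}(t))^2$, whence $\E[\xi_u(t)^2] = \E[(W^{(u)}(t))^2] = \E[\widetilde{W}^{(u)}(t)\widetilde{W}^{(v)}(t)]$. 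In every case the pair $(\widetilde{W}^{(u)},\widetilde{W}^{(v)})$ is exactly the $Q$-chain started at $(u,v)$, where a diagonal start $(u,u)\in S_0$ is a legitimate state from which the irreducible, aperiodic chain still relaxes to $\mu$.

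Next, for $t$ at least the mixing time of $Q$, \cref{lem:crwTOstationarydistr} gives $\E[\widetilde{W}^{(u)}(t)\widetilde{W}^{(v)}(t)] = \Sigma \pm 1/n^5$, where $\Sigma := \sum_{a,b}\mu(a,b)\xi_a(0)\xi_b(0)$, and crucially this estimate is uniform in the starting pair $(u,v)$ because the mixing bound $|\mu(a,b)-Q^t((u,v),(a,b))|\le 1/(K^2n^7)$ holds from every state. Summing against the weights and using $\sum_{u,v}\pi_u\pi_v = (\sum_u\pi_u)^2 = 1$ then yields $\E[M(t)^2] = \Sigma \pm 1/n^5$ for all large $t$. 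It remains to evaluate $\Sigma$: partitioning $V\times V = S_0\sqcup S_1\sqcup S_+$ (\cref{def:threeSets}) and inserting the three values from \cref{prop:stationaryDistrib} gives $\Sigma = \mu_0\sum_u\xi_u^2(0) + \mu_1\sum_{(u,v)\in E^+}\xi_u(0)\xi_v(0) + \mu_+\sum_{(u,v)\in S_+}\xi_u(0)\xi_v(0)$; the centering $\sum_u\xi_u(0)=0$ makes $\sum_{u,v}\xi_u(0)\xi_v(0)=0$, so the $S_+$-sum equals $-\sum_u\xi_u^2(0)-\sum_{E^+}\xi_u(0)\xi_v(0)$, and substituting collapses $\Sigma$ to exactly $(\mu_0-\mu_+)\sum_u\xi_u^2(0) + (\mu_1-\mu_+)\sum_{E^+}\xi_u(0)\xi_v(0)$. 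This already establishes the lower bound \eqref{eq:concentrationFirst2} for all $t\ge T$.

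Finally, to upgrade the upper bound \eqref{eq:concentrationFirst1} to \emph{all} $t\ge 0$ I would use that $M(t)$ is a bounded martingale, so $\E[M(t)^2]$ has orthogonal increments, is non-decreasing in $t$, and converges to $\var(F)$. Since $\E[M(t)^2]\le \Sigma + 1/n^5$ for every sufficiently large $t$, monotonicity forces $\E[M(t)^2]\le\var(F)\le \Sigma + 1/n^5$ for every $t\ge 0$. The step I expect to be the main obstacle is the careful handling of the diagonal terms: one must justify reading $\E[\xi_u(t)^2]$ as the product of two independent-given-$\newChi$ copies of the same walk, so that it too is governed by the $Q$-chain started at $(u,u)$, and one must verify that the mixing estimate of \cref{lem:crwTOstationarydistr} is genuinely uniform over \emph{all} starting states, including those in $S_0$. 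This uniformity is precisely what permits the clean normalisation $\sum_{u,v}\pi_u\pi_v=1$ and keeps the accumulated error at $\pm 1/n^5$ rather than blowing up over the $n^2$ summands.
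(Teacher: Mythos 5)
Your proposal is correct and follows essentially the same route as the paper: express $\var(\avg(t))$ as $\frac{1}{n^2}\sum_{u,v}\E[\xi_u(t)\xi_v(t)]$, pass through \cref{prop:duality}, \cref{prop:variance_both_processes1} and \cref{lem:crwTOstationarydistr} to the stationary distribution of the $Q$-chain, collapse the sum over $S_0,S_1,S_+$ using $\sum_u\xi_u(0)=0$, and extend the upper bound to all $t$ via monotonicity of $\E[M(t)^2]$. Your explicit treatment of the diagonal terms (two independent-given-$\newChi$ copies of the walk started at the same node) and of the uniformity of the mixing estimate over starting states is a welcome clarification of points the paper leaves implicit, but it does not change the argument.
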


\begin{proof}
The random variable $\avg(t) = \frac{1}{n}\sum_{x\in V} \xi_x(t)$ is a Martingale (\cref{lem:NodeMartingale}).
Due to our convergence results, we know that as $t \rightarrow \infty$, all node values 
$\xi_x(t)$ converge to the same value. 
Consequently, $\avg(t)$ converges 
to the same value, which we denote by 
$\avg(\infty)$.
This is a random variable 
with expectation $\E[\avg(\infty)] = \avg(0)$, 
since by the Martingale property, for each $t\ge 0$, 
$\E[\avg(t)] = \avg(0)$.
We want to show that the actual value of $\avg(\infty)$ is likely to be close to $\avg(0)$. We do this by showing that $\var(\avg(\infty)) = \lim_{t \rightarrow \infty} \var(\avg(t))$ is small.
We start by 
recalling the assumption that 
$\avg(0)=0$ and using linearity of expectation to obtain
\begin{align*}
\var(\avg(t)) &= \E[\avg(t)^2] - (\E[\avg(t])^2
\\
&= \E[\avg(t)^2]\\
&= \frac{1}{n^2}
\sum_{x,y\in V} \E[\xi_x(t)\xi_y(t)].
\end{align*}

Fixing $t=T$ arbitrarily, 
Propositions~\ref{prop:duality} 
and~\ref{prop:variance_both_processes1} imply that for any pair of nodes $x$ and $y$,
the three products
$\xi_x(T) \xi_y(T)$, $W^{(x)}(T) W^{(y)}(T)$ and $\widetilde{W}^{(x)}(T) \widetilde{W}^{(y)}(T)$ have the same expectation, so 
\begin{align*}
    \var(\avg(T)) &= \frac{1}{n^2}
    \sum_{x,y} \E[\xi_x(T)\xi_y(T)]\\
    &=\frac{1}{n^2}\sum_{x,y} \E[W^{(x)}(T)W^{(y)}(T)]
    \\
    &=\frac{1}{n^2}\sum_{x,y} \E[\widetilde{W}^{(x)}(T)\widetilde{W}^{(y)}(T)]. 
\end{align*}
%
%
Let $\xi = \xi(0)$, then by \cref{lem:crwTOstationarydistr}
and \cref{prop:stationaryDistrib} we get
\begin{align*}
    \var(\avg(T)) &= 
    \frac{1}{n^2}\sum_{x,y} \left(\sum_{u,v}\mu(u,v) \xi_u\xi_v \pm 1/n^5\right) 
    =  \sum_{u,v}\mu(u,v) \xi_u\xi_v \pm 1/n^5\\
    &= \mu_0 \sum_{(u,u) \in S_0} \xi_u^2\ + \mu_1 \sum_{(u,v) \in S_1} \xi_u\xi_v    +\mu_+ \sum_{(u,v) \in S_+} \xi_u\xi_v \pm 1/n^5\\
    & =  (\mu_0-\mu_+) \sum_{(u,u) \in S_0} \xi_u^2 + (\mu_1-\mu_+) \sum_{(u,v) \in S_1} \xi_u\xi_v +\\
    &\qquad \; \; \quad +\mu_+ \sum_{u,v \in V} \xi_u\xi_v \pm 1/n^5\\
    &= (\mu_0-\mu_+) \sum_{u \in V} \xi_u^2 + (\mu_1-\mu_+) \sum_{(u,v) \in E} \xi_u^2 \pm 1/n^5, 
\end{align*}
using for the last equation that
$\sum_{u,v}\xi_u\xi_v=
(\sum_{u}\xi_u)(\sum_u\xi_u)= 0$
(the assumption that $\avg(0)=0$).
Thus \cref{eq:concentrationFirst2} holds.
For \cref{eq:concentrationFirst2}, observe 
that $\var(\avg(t))$ is non-decreasing.
\end{proof}

Note that our techniques only allow us to derive bounds on the variance that come from the mixing time of the Q-chain. Hence, we do not have tight bounds on the variance at the beginning of the process. However, at the end of the process (after the mixing time) our results are asymptotically tight. 
We are now ready to prove \cref{thm:mone}(2).
\begin{proof}
Let $\xi=\xi(0)$, and, 
%
%
%
referring to the right-hand sides in
\eqref{eq:concentrationFirst1} and \eqref{eq:concentrationFirst2},
we derive
 \begin{align}
 & (\mu_0-\mu_+)\hspace{-0.1cm} \sum_{u \in V} \xi_u^2 \; + \; (\mu_1-\mu_+)\hspace{-0.2cm} \sum_{(u,v) \in E^+} \xi_u\xi_v 
 =\nonumber\\
 &\;\;\;\;=((\mu_0-\mu_+)-d(\mu_1-\mu_+)) \twonorm{\xi} \notag \\
 &\phantom{Xxxxxxxxxxx}+ (\mu_1-\mu_+)\left(\sum_{(u,v) \in E^+} \xi_u\xi_v + d \sum_{u\in V} \xi_u^2 \right) \label{jkje5s}
 \end{align}
Observe that 
$
0 \le \sum_{(u,v) \in E^+} \xi_u\xi_v + d \sum_{u\in V} \xi_u^2 \le 2d\twonorm{\xi},
$
which follows from
$
\sum_{(u,v) \in E^+} \xi_u\xi_v + d \sum_{u\in V} \xi_u^2 \; = 
\sum_{\{u,v\} \in E} (\xi_u+\xi_v)^2
\; \le \sum_{\{u,v\} \in E} 2(\xi_u^2+\xi_v^2)
\: =\: 2d \sum_{u\in V} \xi_u^2
\: =\: 2d\twonorm{\xi}.
$
Then, by noting that $\mu_1 - \mu_+ \leq 0$, we have from \cref{eq:concentrationFirst1} and \cref{jkje5s} that
$\var(F) - 1/n^5 \leq  ((\mu_0-\mu_+)-d(\mu_1-\mu_+)) \twonorm{\xi} =\frac{2k(d-1)(1-\alpha)}{n^2(3dk+d-3k)} \twonorm{\xi} = O\left( \frac{\twonorm{\xi}}{n^2}\right).
$
From \cref{eq:concentrationFirst2} and \cref{jkje5s} we obtain
 $\var(F) + 1/n^5 \geq  \frac{2k(d-1)(1-\alpha)}{n^2(3dk+d-3k)} \twonorm{\xi} 
 + \frac{-k(1+\alpha)+(1-\alpha)+2\alpha k}{n^2(3dk+d-3k)}2 d\twonorm{\xi}= \frac{2(1-\alpha)(2dk-d-k)}{n^2(3dk+d-3k)} \twonorm{\xi}= \Omega\left( \frac{\twonorm{\xi}}{n^2}\right).
$
\end{proof}

 \section{Future Work}
 There are two enticing lines of future work. First,  we have shown how to obtain tight bounds on $\var(F)$ by analysing the distribution of two dependent random walks. Can one obtain bounds on higher moments $M>2$ by considering $M$-dependent random walks? This would allow to derive stronger Chernoff-type results for the concentration of $F$. 
Second, is it possible to bound the concentration in the $\mone$ and $\mtwo$ for irregular graphs?

\section*{Acknowledgments}
P. Berenbrink was in part supported by the Deutsche Forschungsgemeinschaft (DFG) - Project number 491453517 and number DFG-FOR 2975.
F. Mallmann-Trenn was in part supported by the EPSRC grant EP/W005573/1.

\bibliographystyle{acm}
\bibliography{biblio}

\begin{thebibliography}{10}

\bibitem{aldous}
{\sc Aldous, D.}
\newblock Interacting particle systems as stochastic social dynamics.
\newblock {\em Bernoulli 19}, 4 (2013), 1122--1149.

\bibitem{Aldous2012Averaging}
{\sc Aldous, D., and Lanoue, D.}
\newblock {A lecture on the averaging process}.
\newblock {\em Probability Surveys 9}, none (2012), 90 -- 102.

\bibitem{alistarh_et_al:LIPIcs:2020:12414}
{\sc Alistarh, D., Nadiradze, G., and Sabour, A.}
\newblock {Dynamic Averaging Load Balancing on Cycles}.
\newblock In {\em 47th International Colloquium on Automata, Languages, and
  Programming (ICALP 2020)\/} (Dagstuhl, Germany, 2020), A.~Czumaj, A.~Dawar,
  and E.~Merelli, Eds., vol.~168 of {\em Leibniz International Proceedings in
  Informatics (LIPIcs)}, Schloss Dagstuhl--Leibniz-Zentrum f{\"u}r Informatik,
  pp.~7:1--7:16.

\bibitem{DBLPjournals/dc/AngluinAE08}
{\sc Angluin, D., Aspnes, J., and Eisenstat, D.}
\newblock A simple population protocol for fast robust approximate majority.
\newblock {\em Distributed Computing 21}, 2 (2008), 87--102.

\bibitem{DBLP:journals/sigact/BecchettiCN20}
{\sc Becchetti, L., Clementi, A. E.~F., and Natale, E.}
\newblock Consensus dynamics: An overview.
\newblock {\em {SIGACT} News 51}, 1 (2020), 58--104.

\bibitem{DBLP:conf/isit/BenezitBTTV10}
{\sc B{\'{e}}n{\'{e}}zit, F., Blondel, V.~D., Thiran, P., Tsitsiklis, J.~N.,
  and Vetterli, M.}
\newblock Weighted gossip: Distributed averaging using non-doubly stochastic
  matrices.
\newblock In {\em {IEEE} International Symposium on Information Theory, {ISIT}
  2010, June 13-18, 2010, Proceedings\/} (Austin, Texas, USA, 2010), {IEEE},
  pp.~1753--1757.

\bibitem{beni1996pattern}
{\sc Beni, G., and Liang, P.}
\newblock Pattern reconfiguration in swarms-convergence of a distributed
  asynchronous and bounded iterative algorithm.
\newblock {\em IEEE Transactions on Robotics and Automation 12}, 3 (1996),
  485--490.

\bibitem{berenbrink2017ignore}
{\sc Berenbrink, P., Clementi, A., Els{\"a}sser, R., Kling, P., Mallmann-Trenn,
  F., and Natale, E.}
\newblock Ignore or comply? on breaking symmetry in consensus.
\newblock In {\em Proceedings of the ACM Symposium on Principles of Distributed
  Computing\/} (Washington, DC, 2017), {ACM}, pp.~335--344.

\bibitem{berenbrink2023distributed}
{\sc Berenbrink, P., Cooper, C., Gava, C., Marzagão, D.~K., Mallmann-Trenn,
  F., Rivera, N., and Radzik, T.}
\newblock Distributed averaging in population protocols, arXiv:2211.17125,
  2023.

\bibitem{DBLP:conf/icalp/BerenbrinkFGK16}
{\sc Berenbrink, P., Friedetzky, T., Giakkoupis, G., and Kling, P.}
\newblock Efficient plurality consensus, or: the benefits of cleaning up from
  time to time.
\newblock In {\em 43rd International Colloquium on Automata, Languages, and
  Programming, {ICALP} 2016, July 11-15, 2016\/} (Rome, Italy, 2016),
  I.~Chatzigiannakis, M.~Mitzenmacher, Y.~Rabani, and D.~Sangiorgi, Eds.,
  vol.~55 of {\em LIPIcs}, ICALP, pp.~136:1--136:14.

\bibitem{DBLP:journals/jpdc/BerenbrinkFH09}
{\sc Berenbrink, P., Friedetzky, T., and Hu, Z.}
\newblock A new analytical method for parallel, diffusion-type load balancing.
\newblock {\em J. Parallel Distributed Comput. 69}, 1 (2009), 54--61.

\bibitem{DBLP:conf/icalp/BerenbrinkGKM16}
{\sc Berenbrink, P., Giakkoupis, G., Kermarrec, A.-M., and Mallmann-Trenn, F.}
\newblock Bounds on the voter model in dynamic networks.
\newblock In {\em 43rd International Colloquium on Automata, Languages, and
  Programming, {ICALP} 2016, July 11-15, 2016, Rome, Italy\/} (Rome, Italy,
  2016), I.~Chatzigiannakis, M.~Mitzenmacher, Y.~Rabani, and D.~Sangiorgi,
  Eds., vol.~55 of {\em LIPIcs}, Schloss Dagstuhl - Leibniz-Zentrum f{\"{u}}r
  Informatik, pp.~146:1--146:15.

\bibitem{BhattacharyyaBCN13}
{\sc Bhattacharyya, A., Braverman, M., Chazelle, B., and Nguyen, H.~L.}
\newblock On the convergence of the hegselmann-krause system.
\newblock In {\em Proceedings of the 4th Conference on Innovations in
  Theoretical Computer Science\/} (New York, NY, USA, 2013), ITCS '13,
  Association for Computing Machinery, p.~61–66.

\bibitem{DBLP:journals/tit/BoydGPS06}
{\sc Boyd, S.~P., Ghosh, A., Prabhakar, B., and Shah, D.}
\newblock Randomized gossip algorithms.
\newblock {\em {IEEE} Trans. Inf. Theory 52}, 6 (2006), 2508--2530.

\bibitem{DBLP:conf/icalp/CaiS17}
{\sc Cai, L., and Sauerwald, T.}
\newblock Randomized load balancing on networks with stochastic inputs.
\newblock In {\em 44th International Colloquium on Automata, Languages, and
  Programming, {ICALP} 2017, July 10-14, 2017\/} (Warsaw, Poland, 2017),
  I.~Chatzigiannakis, P.~Indyk, F.~Kuhn, and A.~Muscholl, Eds., vol.~80 of {\em
  LIPIcs}, Schloss Dagstuhl - Leibniz-Zentrum f{\"{u}}r Informatik,
  pp.~139:1--139:14.

\bibitem{chazelle2012natural}
{\sc Chazelle, B.}
\newblock Natural algorithms and influence systems.
\newblock {\em Communications of the ACM 55}, 12 (2012), 101--110.

\bibitem{DBLP:journals/siamcomp/Chazelle15}
{\sc Chazelle, B.}
\newblock Diffusive influence systems.
\newblock {\em {SIAM} J. Comput. 44}, 5 (2015), 1403--1442.

\bibitem{DBLP:conf/podc/CooperEOR12}
{\sc Cooper, C., Els{\"{a}}sser, R., Ono, H., and Radzik, T.}
\newblock Coalescing random walks and voting on graphs.
\newblock In {\em {ACM} Symposium on Principles of Distributed Computing,
  {PODC} '12, Funchal, Madeira, Portugal, July 16-18, 2012\/} (Madeira,
  Portugal, 2012), D.~Kowalski and A.~Panconesi, Eds., {ACM}, pp.~47--56.

\bibitem{CER14}
{\sc Cooper, C., Elsässer, R., and Radzik, T.}
\newblock The power of two choices in distributed voting.
\newblock In {\em Automata, Languages, and Programming - 41st International
  Colloquium, ICALP 2014, Copenhagen, Denmark, July 8-11, 2014, Proceedings,
  Part II\/} (Copenhagen, Denmark, 2014), J.~Esparza, P.~Fraigniaud,
  T.~Husfeldt, and E.~Koutsoupias, Eds., vol.~8573 of {\em Lecture Notes in
  Computer Science}, Springer, pp.~435--446.

\bibitem{DBLP:conf/wdag/CooperRRS17}
{\sc Cooper, C., Radzik, T., Rivera, N., and Shiraga, T.}
\newblock Fast plurality consensus in regular expanders.
\newblock In {\em 31st International Symposium on Distributed Computing, {DISC}
  2017, October 16-20, 2017\/} (Vienna, Austria, 2017), A.~W. Richa, Ed.,
  vol.~91 of {\em LIPIcs}, Schloss Dagstuhl - Leibniz-Zentrum f{\"{u}}r
  Informatik, pp.~13:1--13:16.

\bibitem{DBLP:journals/jda/CooperR18}
{\sc Cooper, C., and Rivera, N.}
\newblock Threshold behaviour of discordant voting on the complete graph.
\newblock {\em J. Discrete Algorithms 50\/} (2018), 10--22.

\bibitem{C89}
{\sc Cybenko, G.}
\newblock Dynamic load balancing for distributed memory multiprocessors.
\newblock {\em Journal of parallel and distributed computing 7}, 2 (1989),
  279--301.

\bibitem{DeGroot}
{\sc DeGroot, M.}
\newblock Reaching a consensus.
\newblock {\em Journal of the American Statistical Association 69}, 345 (1974),
  118--121.

\bibitem{DBLP:conf/cdc/Dominguez-GarciaH11a}
{\sc Dom{\'{\i}}nguez{-}Garc{\'{\i}}a, A.~D., and Hadjicostis, C.~N.}
\newblock Distributed strategies for average consensus in directed graphs.
\newblock In {\em 50th {IEEE} Conference on Decision and Control and European
  Control Conference, 11th European Control Conference, {CDC/ECC} 2011,
  December 12-15, 2011\/} (Orlando, FL, USA, 2011), {IEEE}, pp.~2124--2129.

\bibitem{DBLP:conf/focs/DotyEGSUS21}
{\sc Doty, D., Eftekhari, M., Gasieniec, L., Severson, E.~E., Uznanski, P., and
  Stachowiak, G.}
\newblock A time and space optimal stable population protocol solving exact
  majority.
\newblock In {\em 62nd {IEEE} Annual Symposium on Foundations of Computer
  Science, {FOCS} 2021, February 7-10, 2022\/} (Denver, CO, USA, 2021), {IEEE},
  pp.~1044--1055.

\bibitem{Draief}
{\sc Draief, M., and Massoulié, L.}
\newblock {\em Epidemics and Rumours in Complex Networks}.
\newblock London Mathematical Society Lecture Note Series. Cambridge University
  Press, London, UK, 2009.

\bibitem{DBLP:conf/wine/FotakisKKS18}
{\sc Fotakis, D., Kandiros, A.~V., Kontonis, V., and Skoulakis, S.}
\newblock Opinion dynamics with limited information.
\newblock In {\em Web and Internet Economics - 14th International Conference,
  {WINE} 2018, Oxford, UK, December 15-17, 2018, Proceedings\/} (Oxford, UK,
  2018), G.~Christodoulou and T.~Harks, Eds., vol.~11316 of {\em Lecture Notes
  in Computer Science}, Springer, pp.~282--296.

\bibitem{FotakisPS16}
{\sc Fotakis, D., Palyvos-Giannas, D., and Skoulakis, S.}
\newblock Opinion dynamics with local interactions.
\newblock In {\em Proceedings of the Twenty-Fifth International Joint
  Conference on Artificial Intelligence\/} (New York, New York, USA, 2016),
  IJCAI'16, AAAI Press, p.~279–285.

\bibitem{FJ90}
{\sc Friedkin, N.~E., and Johnsen, E.~C.}
\newblock Social influence and opinions.
\newblock {\em Journal of Mathematical Sociology 15}, 3-4 (1990), 193--206.

\bibitem{DBLP:conf/stoc/FriedrichS09}
{\sc Friedrich, T., and Sauerwald, T.}
\newblock Near-perfect load balancing by randomized rounding.
\newblock In {\em Proceedings of the 41st Annual {ACM} Symposium on Theory of
  Computing, {STOC} 2009, May 31 - June 2, 2009\/} (Bethesda, MD, USA, 2009),
  M.~Mitzenmacher, Ed., {ACM}, pp.~121--130.

\bibitem{DBLP:journals/automatica/GhaderiS14}
{\sc Ghaderi, J., and Srikant, R.}
\newblock Opinion dynamics in social networks with stubborn agents: Equilibrium
  and convergence rate.
\newblock {\em Autom. 50}, 12 (2014), 3209--3215.

\bibitem{DBLP:conf/podc/GhaffariP16a}
{\sc Ghaffari, M., and Parter, M.}
\newblock A polylogarithmic gossip algorithm for plurality consensus.
\newblock In {\em Proceedings of the 2016 {ACM} Symposium on Principles of
  Distributed Computing, {PODC} 2016, Chicago, IL, USA, July 25-28, 2016\/}
  (Chicago, IL, USA, 2016), G.~Giakkoupis, Ed., {ACM}, pp.~117--126.

\bibitem{HP01}
{\sc Hassin, Y., and Peleg, D.}
\newblock Distributed probabilistic polling and applications to proportionate
  agreement.
\newblock {\em Information and Computation 171}, 2 (2001), 248--268.

\bibitem{HK02}
{\sc Hegselmann, R., and Krause, U.}
\newblock {Opinion Dynamics and Bounded Confidence Models, Analysis and
  Simulation}.
\newblock {\em Journal of Artificial Societies and Social Simulation 5}, 3
  (2002), 1--2.

\bibitem{DBLP:conf/focs/KempeDG03}
{\sc Kempe, D., Dobra, A., and Gehrke, J.}
\newblock Gossip-based computation of aggregate information.
\newblock In {\em 44th Symposium on Foundations of Computer Science {(FOCS}
  2003), 11-14 October 2003, Proceedings\/} (Cambridge, MA, USA, 2003), {IEEE}
  Computer Society, pp.~482--491.

\bibitem{DBLP:conf/icalp/KowalskiM18}
{\sc Kowalski, D.~R., and Mosteiro, M.~A.}
\newblock Polynomial counting in anonymous dynamic networks with applications
  to anonymous dynamic algebraic computations.
\newblock In {\em 45th International Colloquium on Automata, Languages, and
  Programming, {ICALP} 2018, July 9-13, 2018\/} (Prague, Czech Republic, 2018),
  I.~Chatzigiannakis, C.~Kaklamanis, D.~Marx, and D.~Sannella, Eds., vol.~107
  of {\em LIPIcs}, Schloss Dagstuhl - Leibniz-Zentrum f{\"{u}}r Informatik,
  pp.~156:1--156:14.

\bibitem{DBLP:conf/icalp/KowalskiM19}
{\sc Kowalski, D.~R., and Mosteiro, M.~A.}
\newblock Polynomial anonymous dynamic distributed computing without a unique
  leader.
\newblock In {\em 46th International Colloquium on Automata, Languages, and
  Programming, {ICALP} 2019, July 9-12, 2019\/} (Patras, Greece, 2019),
  C.~Baier, I.~Chatzigiannakis, P.~Flocchini, and S.~Leonardi, Eds., vol.~132
  of {\em LIPIcs}, Schloss Dagstuhl - Leibniz-Zentrum f{\"{u}}r Informatik,
  pp.~147:1--147:15.

\bibitem{DBLP:phd/hal/Lambein20}
{\sc Lambein{-}Monette, P.}
\newblock {\em Average consensus in anonymous dynamic networks: An
  algorithmic}.
\newblock PhD thesis, Polytechnic Institute of Paris, France, 2020.

\bibitem{levin2017markov}
{\sc Levin, D., and Peres, Y.}
\newblock {\em Markov Chains and Mixing Times}.
\newblock MBK. American Mathematical Society, Providence, Rhode Island, USA,
  2017.

\bibitem{lieberman2005evolutionary}
{\sc Lieberman, E., Hauert, C., and Nowak, M.~A.}
\newblock Evolutionary dynamics on graphs.
\newblock {\em Nature 433}, 7023 (2005), 312.

\bibitem{DBLP:conf/podc/LunaB15}
{\sc Luna, G. A.~D., and Baldoni, R.}
\newblock Brief announcement: Investigating the cost of anonymity on dynamic
  networks.
\newblock In {\em Proceedings of the 2015 {ACM} Symposium on Principles of
  Distributed Computing, {PODC} 2015, July 21 - 23, 2015\/} (Donostia-San
  Sebasti{\'{a}}n, Spain, 2015), C.~Georgiou and P.~G. Spirakis, Eds., {ACM},
  pp.~339--341.

\bibitem{DBLP:conf/focs/MontanariS09}
{\sc Montanari, A., and Saberi, A.}
\newblock Convergence to equilibrium in local interaction games.
\newblock In {\em 50th Annual {IEEE} Symposium on Foundations of Computer
  Science, {FOCS} 2009, October 25-27, 2009, Atlanta, Georgia, {USA}\/}
  (Atlanta, Georgia, {USA}, 2009), {IEEE} Computer Society, pp.~303--312.

\bibitem{DBLP:conf/podc/Mosk-AoyamaS06}
{\sc Mosk{-}Aoyama, D., and Shah, D.}
\newblock Computing separable functions via gossip.
\newblock In {\em Proceedings of the Twenty-Fifth Annual {ACM} Symposium on
  Principles of Distributed Computing, {PODC} 2006, Denver, CO, USA, July
  23-26, 2006\/} (Denver, CO, USA, 2006), E.~Ruppert and D.~Malkhi, Eds.,
  {ACM}, pp.~113--122.

\bibitem{MGS98}
{\sc Muthukrishnan, S., Ghosh, B., and Schultz, M.~H.}
\newblock First- and second-order diffusive methods for rapid, coarse,
  distributed load balancing.
\newblock {\em Theory Comput. Syst. 31}, 4 (1998), 331--354.

\bibitem{rabani1998local}
{\sc Rabani, Y., Sinclair, A., and Wanka, R.}
\newblock Local divergence of markov chains and the analysis of iterative load
  balancing schemes.
\newblock In {\em 39th Annual Symposium on Foundations of Computer Science,
  {FOCS} '98, November 8-11, 1998, Palo Alto, California, {USA}\/} (Palo Alto,
  California, {USA}, 1998), {IEEE} Computer Society, pp.~694--705.

\bibitem{DBLP:conf/focs/SauerwaldS12}
{\sc Sauerwald, T., and Sun, H.}
\newblock Tight bounds for randomized load balancing on arbitrary network
  topologies.
\newblock In {\em 53rd Annual {IEEE} Symposium on Foundations of Computer
  Science, {FOCS} 2012, October 20-23, 2012\/} (New Brunswick, NJ, USA, 2012),
  {IEEE} Computer Society, pp.~341--350.

\bibitem{DBLP:journals/tcns/SilvestreHS19}
{\sc Silvestre, D., Hespanha, J.~P., and Silvestre, C.}
\newblock Broadcast and gossip stochastic average consensus algorithms in
  directed topologies.
\newblock {\em {IEEE} Trans. Control. Netw. Syst. 6}, 2 (2019), 474--486.

\bibitem{SS94}
{\sc Subramanian, R., and Scherson, I.~D.}
\newblock An analysis of diffusive load-balancing.
\newblock In {\em Proceedings of the 6th Annual {ACM} Symposium on Parallel
  Algorithms and Architectures, {SPAA} '94, Cape May, New Jersey, USA, June
  27-29, 1994\/} (Cape May, New Jersey, USA, 1994), L.~Snyder and C.~E.
  Leiserson, Eds., {ACM}, pp.~220--225.

\bibitem{particles}
{\sc Tsitsiklis, J.}
\newblock {\em Problems in Decentralized Decision Making and Computation}.
\newblock PhD thesis, MIT, 1984.

\end{thebibliography}

\newpage

\onecolumn
\tableofcontents
\section*{Appendix}
\appendix
\section{Martingale Property - Proof of 
Lemma~\ref{lem:NodeMartingale} 
} \label{app:proofMartingale}
We show here that $M(t)$ is a martingale, proving \cref{lem:NodeMartingale}


\begin{proof}
Let $I$ denotes the identity matrix of size $n \times n$, and $J_{ij}$ the matrix that has exactly the value 1 in coordinate row i, column j, and 0 otherwise.

Suppose the current values of the nodes are $\xi$, and let $\xi'$ the values after one iteration. Note that the evolution of the averaging process can then be written as
$$\xi' = \left(\alpha I + \frac{1-\alpha}{k} \sum_{i=1}^k I + J_{u,v_i}-J_{u,u}\right)\xi$$
where $u\in V$ is the chosen node to update its value, and $(v_i)$ are the neighbours selected by $u$. Let $L =  \left(\alpha I + \frac{1-\alpha}{k} \sum_{i=1}^k I + J_{u,v_i}-J_{u,u}\right)$ the (random) matrix that gives the value of $\xi'$.

Note that $M(t) = \langle \xi, \mathbf{1}\rangle_{\pi}$ (where $\mathbf{1}$ is the vector with one 1's), and thus $M(t+1) = \langle L\xi, \mathbf{1}\rangle_{\pi}$. By linearity of expectation and the fact that  $L$ is independent of the current values $\xi$ we have
\begin{align*}
    \E(M(t+1)|\xi) = \langle \E(L)\xi, \mathbf{1}\rangle_{\pi}.
\end{align*}
We shall compute $\E(L)$. By linearity again, we have $\E(L) = \alpha I + (1-\alpha)I+\E(J_{u,v_i})-\E(J_{u,u})$.

It is easy to see that $\E(J_{u,u}) = \frac{1}{n}\sum_{w\in V} J_{w,w} = \frac{1}{n}I$. Now, conditioned on $u$ we have that
$$\E(J_{u,v_i}|u) = \sum_{w\in V} \ind{u\sim w} \frac{1}{d(u)} J_{u,w}$$
and thus
$$\E(J_{u,v_i}) = \frac{1}{n}\sum_{z\in V}\sum_{w\in V} \ind{z\sim w} \frac{1}{d(z)} J_{z,w}.$$
Then, if we ask for coordinate $(i,j)$ of the previous matrix we find it is equal to $\frac{1}{nd(i)}$ if $i\sim j$, 0 otherwise. I.e. the matrix is $\frac{1}{n}P$, where $P$ is the transition matrix of the random walk on $G$. Therefore $\E(L) = (1-1/n) I+ (1/n)P$, and thus

\begin{align*}
    \E(M(t+1)|\xi) &= \langle ((1-1/n) I+ (1/n)P)\xi, \mathbf{1}\rangle_{\pi}= \langle \xi, ((1-1/n) I+ (1/n)P)\mathbf{1}\rangle_{\pi}= \langle \xi, \mathbf{1}\rangle_{\pi}=M(t).
\end{align*}
The second equality holds since $I$ and $P$ are both self-adjoint with respect to $\langle \cdot, \cdot \rangle_{\pi}$, and $P\mathbf{1} = \mathbf{1}$.

\end{proof}
\section{\mone -- Convergence Time} \label{app:nodeConv}
\label{sec:TimeConvergenceFramework}

Recall that at each time step $t\geq 0$,  a node $u$ is chosen uniformly at random. That node, in turn,  samples $k$ neighbours $v_1,\ldots, v_k$ uniformly at random without replacement. 
The value of $u$ changes to 
\begin{align}\label{eqn:changexi}
\xi_{u}(t+1) = \alpha \xi_{u}(t)+(1-\alpha)\frac{1}{k}\sum_{i=1}^k\xi_{v_i}(t) 
\end{align}
and all other nodes keep their previous values. 
Recall that  the with $\pi$ weighted inner product of two vectors $\nu$ and $\nu'$ in $\R^n$
is denoted by 
$
\left \langle \nu,\nu' \right \rangle_{\pi} = \sum_{x\in V} \pi_x \nu_x \nu'_x.
$
and recall that $\|\xi\|_{\pi}^2 = \left \langle \xi, \xi \right \rangle_{\pi}$.
Note that for regular graphs,
\begin{equation}\label{eq:pony2}
 \phi(\xi(0))= \frac{1}{2n^2}\sum_{u,v \in V} (\xi_u(0)-\xi_v(0))^2=\frac{1}{n}\sum_{u \in V} (\xi_u(0)-\avg(0))^2=\frac{\twonorm{\xi(0)}}{n}.
\end{equation}
In general we have,
\begin{equation}\label{eq:ponies}
\pi_{min}^2 n\twonorm{\xi(0)} \leq \phi(\xi(0))\leq \pi_{max}^2 n\twonorm{\xi(0)}.
\end{equation}

We continue by introducing some additional notation.
Recall that $P$ is the transition matrix of the simple, lazy random walk on $G$ whose entries are $P(x,y)$. Further, that $\pi$ is the stationary distribution of $P$ and note that all vectors are in $\R^V$, and matrices are of dimension $\R^{V\times V}$.
Throughout this proof we use capital letters $X$ and $Y$ to  represent random variables accounting for nodes. Lowercase letters $x$ and $y$ represent fixed outcomes for these random variables. 
With  $\one$ we denote the vector where all entries are $1$, $I$ denotes the identity matrix and $\delta_{xy}$ is the matrix with a $1$ in position $x,y$, and $0$ elsewhere. We have \begin{align}\label{eq:hut}
\left (\delta_{xy}\cdot \xi(t)\right)_z  = 
\begin{cases} 
          \xi_y(t) & \text{if }z = x \\
          0 & \text{if }z\neq x
\end{cases}
\end{align}
With this notation, we have that \cref{eqn:changexi} can be written as
\begin{align}\label{eqn:changexiDelta}
\xi(t+1) = \left(I+ \frac{(1-\alpha)}{k}\sum_{i=1}^k (\delta_{XY_i}-\delta_{XX})\right)\xi(t)
\end{align}
where $X$ is sampled uniformly at random and $Y_i$ are (independent) random neighbours of $X$.

We now proceed to find the upper and lower bounds in \cref{app:upper} and \cref{app:lower}.

\subsection{Upper Bound} \label{app:upper}
First we bound the drop of the potential $\phi$.
\begin{proposition}\label{pro:potentialdrop}
For every $t>0$ we have
\begin{align*}
\E(\phi(\xi(t+1))|\xi(t)) \leq \left(1-\frac{(1-\alpha)(1-\lambda_2)[2\alpha+(1-\alpha)(1+\lambda_2)(1-k^{-1})]}{n}\right)\phi(\xi(t)).
\end{align*}
Moreover, for $\xi(0) = f_2$, the eigenvector associated to the second eigenvalue of the transition matrix $P$, $\lambda_2$, we have
\begin{align*}
\E(\phi(\xi(t)))\geq \left(1-\frac{1-\alpha}{n}(1-\lambda_2)\right)^{2t}
\end{align*}
\end{proposition}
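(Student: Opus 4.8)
The plan is to analyse how the potential $\phi$ changes in a single step by computing $\E(\phi(\xi(t+1)) \mid \xi(t))$ exactly and then bounding it. Since $\phi(\xi) = \langle \xi, \xi\rangle_\pi - \langle \one, \xi\rangle_\pi^2$ and the second term $M(t) = \langle \one, \xi\rangle_\pi$ is a martingale by \cref{lem:NodeMartingale}, the expected drop of $\phi$ is governed entirely by the change in $\langle \xi, \xi\rangle_\pi$. So first I would use the one-step update \eqref{eqn:changexiDelta}, namely $\xi(t+1) = (I + \frac{1-\alpha}{k}\sum_{i=1}^k(\delta_{XY_i} - \delta_{XX}))\xi(t)$, and expand $\langle \xi(t+1), \xi(t+1)\rangle_\pi$ as a quadratic form. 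Writing $N = \frac{1-\alpha}{k}\sum_i(\delta_{XY_i}-\delta_{XX})$, this gives $\langle (I+N)\xi, (I+N)\xi\rangle_\pi = \langle \xi,\xi\rangle_\pi + 2\langle \xi, N\xi\rangle_\pi + \langle N\xi, N\xi\rangle_\pi$, and the work is to take expectations of the last two terms over the random choice of $X$ and the neighbours $Y_i$.

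The key computation is to evaluate $\E(\langle \xi, N\xi\rangle_\pi)$ and $\E(\langle N\xi, N\xi\rangle_\pi)$ in terms of spectral quantities of $P$. For the cross term, averaging $\delta_{XY_i}-\delta_{XX}$ over a uniform node $X$ and a uniform neighbour $Y_i$ should produce (up to the $\pi$-weighting and the factor $1/n$) something proportional to $P - I$ acting on $\xi$; this is exactly the calculation already carried out in the proof of \cref{lem:NodeMartingale}, where $\E(J_{u,v_i} - J_{u,u}) = \frac{1}{n}(P-I)$. Using that $P$ is self-adjoint with respect to $\langle\cdot,\cdot\rangle_\pi$ and that, after projecting out the all-ones direction (which is legitimate since the $M$-component is invariant), $\langle \xi, (P-I)\xi\rangle_\pi \le -(1-\lambda_2)\langle \xi,\xi\rangle_\pi$ on the orthogonal complement of $\one$, the cross term contributes the negative drift. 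The quadratic term $\E(\langle N\xi, N\xi\rangle_\pi)$ contributes a positive correction; here the factor $(1-k^{-1})$ in the statement will emerge from the with/without-replacement sampling of the $k$ neighbours, i.e.\ from separating the diagonal contributions ($i=j$ in the double sum) from the off-diagonal ones ($i\ne j$), the latter carrying the extra $(k-1)/k$ factor and a second factor of $\lambda_2$-type term. Collecting the coefficient of $\phi(\xi(t))$ and matching it against the bracket $[2\alpha + (1-\alpha)(1+\lambda_2)(1-k^{-1})]$ is the bookkeeping heart of the argument.

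For the moreover part, I would specialise to $\xi(0) = f_2$, the eigenvector of $P$ for $\lambda_2$. The point is that for this particular initial vector the inequalities above become equalities in the direction that matters: $P f_2 = \lambda_2 f_2$, so the drift computation is exact rather than bounded, and moreover $f_2 \perp_\pi \one$ so the martingale term vanishes and $\phi(\xi(t)) = \langle \xi(t),\xi(t)\rangle_\pi$. I would then argue that $\E(\phi(\xi(t+1)) \mid \xi(t)) \ge (1 - \frac{1-\alpha}{n}(1-\lambda_2))^2 \phi(\xi(t))$, keeping only the two leading contributions (the identity and the single-step $\lambda_2$ term) and discarding the remaining nonnegative quadratic pieces, so that the square appears. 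Iterating this lower bound over $t$ steps via the tower property yields $\E(\phi(\xi(t))) \ge (1 - \frac{1-\alpha}{n}(1-\lambda_2))^{2t}$, after normalising $\phi(f_2)$ appropriately.

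The main obstacle I anticipate is the clean evaluation of the quadratic term $\E(\langle N\xi, N\xi\rangle_\pi)$, in particular correctly tracking the without-replacement sampling of the $k$ neighbours so that the $(1-k^{-1})$ factor and the corresponding $(1+\lambda_2)$ factor fall out with exactly the right constants; the off-diagonal cross-neighbour terms $\E[\delta_{XY_i}\delta_{XY_j}]$ for $i\ne j$ require care because they couple two independent neighbours of the same centre $X$ and must be re-expressed through $P^2$ or through $\langle \xi, P\xi\rangle_\pi$-type quantities. The cross term and the martingale bookkeeping are comparatively routine given \cref{lem:NodeMartingale}, so essentially all the difficulty is concentrated in that second-moment calculation and in verifying that every lower-order term has the correct sign to justify dropping it in the lower bound.
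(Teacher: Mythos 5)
Your treatment of the first inequality matches the paper's proof essentially step for step: expand $\|\xi(t+1)\|_{\pi}^2$ as $\|\xi\|_{\pi}^2 + 2\langle\xi,N\xi\rangle_{\pi}+\|N\xi\|_{\pi}^2$ with $N=\frac{1-\alpha}{k}\sum_i(\delta_{XY_i}-\delta_{XX})$, evaluate the cross term as $-\frac{k}{n}\langle\xi,(I-P)\xi\rangle_{\pi}$ and the quadratic term by separating the $i=j$ from the $i\neq j$ contributions (the latter producing the $\langle\xi,P^2\xi\rangle_{\pi}$ piece and hence the $(1-k^{-1})(1+\lambda_2)$ factor, exactly as in \cref{lemma:basicExpectationVec2}), then apply the spectral bounds $\langle\xi,(I-P)\xi\rangle_{\pi}\ge(1-\lambda_2)\phi(\xi)$, $\langle\xi,(I-P^2)\xi\rangle_{\pi}\ge(1-\lambda_2^2)\phi(\xi)$ and absorb the $\langle\one,\xi\rangle_{\pi}^2$ term via the martingale property. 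This is the paper's argument and your identification of the $i\ne j$ second-moment terms as the delicate point is accurate.

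The ``moreover'' part, however, has a genuine gap. You propose to prove a \emph{one-step conditional} lower bound $\E(\phi(\xi(t+1))\mid\xi(t))\ge\bigl(1-\frac{1-\alpha}{n}(1-\lambda_2)\bigr)^2\phi(\xi(t))$ and iterate it by the tower property. This cannot work: the eigenvector alignment $P\xi=\lambda_2\xi$ that makes the drift ``exact'' holds only at $t=0$; after one step $\xi(1)$ is a random vector that is no longer proportional to $f_2$, so the same contraction factor is not available at the next step. Worse, the claimed per-step inequality is false for general states: the exact one-step identity (take $k=1$) gives a drop of $\frac{2\alpha(1-\alpha)}{n}\langle\xi,(I-P)\xi\rangle_{\pi}$ in $\|\xi\|_{\pi}^2$, and for $\xi$ aligned with an eigenvector $f_j$ of small eigenvalue this equals $\frac{2\alpha(1-\alpha)(1-\lambda_j)}{n}\phi(\xi)$, which exceeds the budget $\frac{2(1-\alpha)(1-\lambda_2)}{n}\phi(\xi)$ whenever $\alpha(1-\lambda_j)>1-\lambda_2$ (e.g.\ $\alpha$ close to $1$ and $\lambda_2$ close to $1$). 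The paper avoids this entirely by exchanging the order of expectation and potential: linearity gives $\E(\xi(t))=\bigl(I-\frac{1-\alpha}{n}(I-P)\bigr)^t f_2=q_2^{t}f_2$ with $q_2=1-\frac{1-\alpha}{n}(1-\lambda_2)$ (the \emph{expected} trajectory stays in the $f_2$ direction even though the random one does not), and then convexity of the quadratic $\phi$ plus Jensen yields $\E(\phi(\xi(t)))\ge\phi(\E(\xi(t)))=q_2^{2t}\phi(f_2)=q_2^{2t}$. That vector-level expectation followed by Jensen is the missing idea; without it your per-step iteration either fails outright or degrades to a contraction factor with no $(1-\lambda_2)$ in it, which is too weak to certify tightness of the convergence time.
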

\begin{proof}
For the ease of presentation we write $\xi' = \xi(t+1)$ and $\xi = \xi(t)$ in the proof of this lemma.  Then 
\begin{align}
\|\xi'\|_{\pi}^2 &= \left\| \left(I+ \frac{1-\alpha}{k}\sum_{i=1}^k (\delta_{XY_i}-\delta_{XX})\right)\xi\right\|_{\pi}^2\nonumber\\
&=\|\xi\|_{\pi}^2 +\frac{2(1-\alpha)}{k} \left \langle \xi,\left (\sum_{i=1}^k (\delta_{XY_i}-\delta_{XX})\right)\xi  \right \rangle_{\pi}+ \left( \frac{1-\alpha}{k}\right)^2 \left\| \left( \sum_{i=1}^k (\delta_{XY_i}-\delta_{XX})\right)\xi\right\|_{\pi}^2\label{eqn:pot1}
\end{align}

We now take the conditional expectation (which for shortness we just denote by $\E$ instead $\E(\cdot|\xi(t))$). Thus, we essentially just need to take expectation on the random nodes. The only terms involving randomness are the second and third, so we will focus on them. For the second term, linearity of expectation, \cref{lemma:basicExpectationVec2}.(\ref{item:bev21}), and \cref{lemma:basicExpectationVec2}.(\ref{item:bev22}) yield

\begin{align}\label{eqn:pot2}
\E\left(\left\langle \xi,\left (\sum_{i=1}^k (\delta_{XY_i}-\delta_{XX})\right)\xi  \right \rangle_{\pi}\right) &=\frac{k}{n}\left\langle \xi,(P-I)\xi  \right \rangle_{\pi} = -\frac{k}{n}\left\langle \xi,(I-P)\xi  \right \rangle_{\pi}
\end{align} 

\begin{align}
\left\| \left( \sum_{i=1}^k (\delta_{XY_i}-\delta_{XX})\right)\xi\right\|_{\pi}^2&= \left\| \sum_{i=1}^k \delta_{XX}\xi\right\|_{\pi}^2-2\left \langle \sum_{i=1}^k \delta_{XY_i}\xi,\sum_{i=1}^k \delta_{XX}\xi  \right \rangle_{\pi}+\left\| \sum_{i=1}^k \delta_{XY_i}\xi\right\|_{\pi}^2.\label{eqn:pot3}
\end{align}
Then, \cref{lemma:basicExpectationVec2}.(\ref{item:bev21}) yields
\begin{align}\label{eqn:pot31}
\E \left\| \sum_{i=1}^k \delta_{XX}\xi\right\|_{\pi}^2&= \frac{k^2}{n}\|\xi\|_{\pi}^2,
\end{align}
\cref{lemma:basicExpectationVec2}.(\ref{item:bev22})
\begin{align}
\E \left(\left \langle \sum_{i=1}^k \delta_{XY_i}\xi,\sum_{i=1}^k \delta_{XX}\xi  \right \rangle_{\pi} \right)&= \E\left( \left \langle \sum_{i=1}^k \delta_{XY_i}\xi,\sum_{i=1}^k \xi  \right \rangle_{\pi}\right)
= \frac{k^2}{n} \left \langle P\xi,\xi  \right \rangle_{\pi}\label{eqn:pot32}
\end{align}
and, finally, \cref{lemma:basicExpectationVec2}.(\ref{item:bev23}) and \cref{lemma:basicExpectationVec2}.(\ref{item:bev24}) yield
\begin{align}
\E\left(\left\| \sum_{i=1}^k \delta_{XY_i}\xi\right\|_{\pi}^2\right)& = \E\left(\sum_{i=1}^k \left\|\delta_{XY_i}\xi \right\|_{\pi}^2\right)+ \E \left(\sum_{i=1}^n \sum_{j=1}^n\ind{i\neq j} \left\langle \delta_{XY_i}\xi, \delta_{XY_j}\xi \right \rangle_{\pi}\right)
=\frac{k}{n}\|\xi\|_{\pi}^2 + \frac{k^2-k}{n}\left \langle \xi, P^2\xi \right \rangle_{\pi}.\label{eqn:pot33}
\end{align}
By replacing equations~\eqref{eqn:pot31}, \eqref{eqn:pot32}, and \eqref{eqn:pot33} into \eqref{eqn:pot3} we get
\begin{align}\label{eqn:pot3p}
\E\left(\left\| \left( \sum_{i=1}^k (\delta_{XY_i}-\delta_{XX})\right)\xi\right\|_{\pi}^2\right)&=\frac{k^2}{n}\|\xi\|_{\pi}^2 -2\frac{k^2}{n}\left \langle P\xi, \xi \right \rangle_{\pi}+ \frac{k}{n}\|\xi\|_{\pi}^2 + \frac{k^2-k}{n}\left \langle \xi, P^2\xi \right \rangle_{\pi}\\
&= \frac{2k^2}{n}\left \langle \xi, (I-P)\xi \right \rangle_{\pi} -\frac{(k^2-k)}{n}\left \langle \xi, (I-P^2)\xi \right \rangle_{\pi}
\end{align}
Finally, we insert equations~\eqref{eqn:pot2} and \eqref{eqn:pot3p} in equation~\eqref{eqn:pot1} and we get
\begin{align}
\E\left(\|\xi'\|_{\pi}^2\right) &= \|\xi\|_{\pi}^2 - \frac{2(1-\alpha)}{n}\left \langle \xi,(I-P)\xi \right \rangle_{\pi}+\frac{(1-\alpha)^2}{n}\left(2\left \langle \xi, (I-P)\xi \right \rangle_{\pi}- \left(1-\frac{1}{k}\right)\left \langle \xi, (I-P^2)\xi \right \rangle_{\pi} \right)\nonumber\\
&=\|\xi\|_{\pi}^2 - \frac{2(1-\alpha)\alpha}{n}\left \langle \xi,(I-P)\xi \right \rangle_{\pi}-\frac{(1-\alpha)^2}{n}\left(1-\frac{1}{k}\right)\left\langle \xi, (I-P^2)\xi \right \rangle_{\pi}\label{eqn:FinalEPot}
\end{align}

Recall that  $P\xi = \sum_{i=1}^n \lambda_i \left \langle \xi, f_i \right \rangle_{\pi}f_i$, where
$f_i$ are orthonormal under $\left \langle \cdot, \cdot \right \rangle_{\pi}$, and $\lambda_i$ are the eigenvalues $1\geq \lambda_1>\lambda_2\geq \ldots\geq \lambda_n>0$ (See Lemma 12.2 in \cite{levin2017markov}). Hence,
\begin{align*}
\left \langle \xi,(I-P)\xi \right \rangle_{\pi} = \left \langle \sum_{i=1}\left \langle \xi, f_i \right \rangle_{\pi}f_i, \sum_{i=1}^n (1-\lambda_i) \left \langle \xi, f_i \right \rangle_{\pi}f_i \right \rangle_{\pi} \geq \sum_{i=2}^n (1-\lambda_i) \left \langle \xi,f_i \right \rangle_{\pi}^2 \geq (1-\lambda_2) \sum_{i=2}^n  \left \langle \xi,f_i \right \rangle_{\pi}^2
\end{align*}
and using that $f_1 = \one$, we have
\begin{align*}
\left \langle \xi,(I-P)\xi \right \rangle_{\pi} \geq (1-\lambda_2) \sum_{i=2}^n  \left \langle \xi,f_i \right \rangle_{\pi}^2 = (1-\lambda_2 )\left(\|\xi\|_{\pi}^2-\left \langle \one, \xi \right \rangle_{\pi}^2 \right) = (1-\lambda_2)\phi(\xi)
\end{align*}
and similarly,
\begin{align*}
\left \langle \xi,(I-P^2)\xi \right \rangle_{\pi} \geq  \left (1-\lambda_2^2 \right)\left(\|\xi\|_{\pi}^2-\left \langle \one, \xi \right \rangle_{\pi}^2 \right) = (1-\lambda_2^2)\phi(\xi).
\end{align*}
Therefore, by subtracting  $\E\left \langle \one, \xi' \right \rangle_{\pi}^2 \geq  \left \langle \one, \xi \right \rangle_{\pi}^2$ to, respectively, the l.h.s. and the r.h.s. of \cref{eqn:FinalEPot}, and by applying the previous inequalities, we have, by the definition of $\phi$ (see \cref{eqn:defiPot})
\begin{align*}
\E(\phi(\xi(t+1)))&\leq \phi(\xi(t))  -\frac{2(1-\alpha)\alpha}{n}\left \langle \xi,(I-P)\xi \right \rangle_{\pi}-\frac{(1-\alpha)^2}{n}\left(1-\frac{1}{k}\right)\left\langle \xi, (I-P^2)\xi \right \rangle_{\pi}\\
&\leq \phi(\xi(t)) -\frac{2\alpha(1-\alpha)(1-\lambda_2)}{n}\phi(\xi(t))- \frac{(1-\alpha)^2\left(1-\lambda_2^2\right)}{n}\left(1-\frac1k\right) \phi(\xi(t))\\
&= \left(1-\frac{2\alpha(1-\alpha)(1-\lambda_2)+(1-\alpha)^2\left(1-\lambda_2^2\right)(1-1/k)}{n}\right)\phi(\xi(t))\\
&=\left(1-\frac{(1-\alpha)(1-\lambda_2)[2\alpha+(1-\alpha)(1+\lambda_2)(1-k^{-1})]}{n}\right)\phi(\xi(t)),
\end{align*}
\end{proof}

\subsection{Lower Bound}\label{app:lower}
\begin{proposition}
Let $k\geq 1$.
For the $\mone$, set $\xi(0)=n \cdot f_2(P)$, where $f_2(P)$ is the second eigenvector of the random walk transition matrix, then we have 
\begin{align*}
\E(T_\epsilon)=\Omega\left(\frac{n \log( n\twonorm{\xi(0)}/\epsilon)}{(1-\alpha)(1-\lambda_2(P))}\right).
\end{align*}

For the $\mtwo$, set $\xi(0)=n\cdot f_2(L)$, where $f_2(L)$ is the second eigenvector of the Laplacian, then we have 
\begin{align*}
\E(T_\epsilon)=\Omega\left(\frac{m \log(n\twonorm{\xi(0)}/\epsilon)}{(1-\alpha)(\lambda_2(L))}\right).
\end{align*}
\end{proposition}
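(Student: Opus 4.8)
The plan is to use $\xi(0)=n\cdot f_2(P)$ as a worst-case input, so that all of the initial potential sits in the slowest eigendirection $f_2(P)$, and then to show that this single mode contracts only at the rate dictated by $1-\lambda_2(P)$, so $\phi$ cannot fall below $\epsilon$ before time $\Theta\!\big(\tfrac{n\log(n\twonorm{\xi(0)}/\epsilon)}{(1-\alpha)(1-\lambda_2(P))}\big)$. Writing $\beta=(1-\alpha)(1-\lambda_2(P))$, I would track the scalar projection $Z(t)=\left\langle \xi(t),f_2(P)\right\rangle_\pi$, noting that $Z(0)=n$ because $f_2(P)$ is normalised under $\left\langle\cdot,\cdot\right\rangle_\pi$.

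First I would record the one-step drift of $Z$. The expected one-step operator of the \mone, restricted to the eigendirection $f_2(P)$, contracts by exactly $1-\beta/n$ (this is the computation underlying the second part of \cref{pro:potentialdrop}), so $\E[Z(t+1)\mid\xi(t)]=(1-\beta/n)Z(t)$ and $\hat Z(t):=(1-\beta/n)^{-t}Z(t)$ is a martingale with $\hat Z(0)=n$. Taking expectations and applying Jensen recovers $\E[\phi(\xi(t))]\ge(\E[Z(t)])^2=n^2(1-\beta/n)^{2t}$, the bound already stated in \cref{pro:potentialdrop}. The crux, however, is to control $Z(t)$ itself and not merely its mean, since a naive second-moment bound on $Z(t)$ fails: its variance is of the larger order $\E[\phi(\xi(t))]$, which decays strictly slower than $(\E[Z(t)])^2$, so $Z(t)$ does \emph{not} concentrate on all time scales.

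The decisive step is therefore a sharp martingale concentration for $\hat Z$. Because only one node updates per step and $\pi_u=1/n$, and because the discrepancy $K(t)=\max_u\xi_u(t)-\min_u\xi_u(t)$ is deterministically non-increasing, the increments are controlled: $|Z(t+1)-Z(t)|=O(\norm{f_2(P)}_\infty K(0)/n)$, whence $|\hat Z(t+1)-\hat Z(t)|=O\big((1-\beta/n)^{-t}\big)$. Since these bounds \emph{grow} in $t$, I would apply Freedman's inequality rather than plain Azuma: the accumulated predictable quadratic variation up to time $t$ is $\sum_{s<t}O\big((1-\beta/n)^{-2s}\big)=O\big((1-\beta/n)^{-2t}\,n/\beta\big)$, so after the $(1-\beta/n)^{2t}$ rescaling the deviation of $Z(t)$ from its mean $n(1-\beta/n)^{t}$ is $o\big(n(1-\beta/n)^{t}\big)$ for all $t\le t^\star:=c_0\,\tfrac{n}{\beta}\log(n\twonorm{\xi(0)}/\epsilon)$, provided $1/\epsilon$ and $K(0)$ are polynomially bounded (exactly the regime of the theorem). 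Hence $|Z(t)|\ge\tfrac12 n(1-\beta/n)^{t}$ for all $t\le t^\star$ with high probability, and from $\phi(\xi(t))\ge Z(t)^2\ge\tfrac14 n^2(1-\beta/n)^{2t}>\epsilon$ one gets $T_\epsilon>t^\star$ w.h.p., so $\E[T_\epsilon]\ge t^\star\,\Pr(T_\epsilon>t^\star)=\Omega(t^\star)$. I expect this concentration step to be the main obstacle, precisely because the exponential martingale has geometrically growing increments and one must verify that its variance proxy stays below $\hat Z(0)^2=n^2$ throughout $[0,t^\star]$; this is where the polynomial bounds on $1/\epsilon$ and $K(0)$ are genuinely needed.

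For the \mtwo the argument is identical after replacing uniform node sampling by uniform directed-edge sampling and $P$ by the Laplacian $L$: the expected one-step operator becomes $I-\tfrac{1-\alpha}{2m}L$, the tracked mode is $f_2(L)$ in the standard inner product with contraction rate $\beta'=\tfrac{(1-\alpha)\lambda_2(L)}{2m}$, and the same martingale-concentration argument yields $\E[T_\epsilon]=\Omega\big(\tfrac{m\log(n\twonorm{\xi(0)}/\epsilon)}{(1-\alpha)\lambda_2(L)}\big)$.
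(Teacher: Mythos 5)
Your first half is precisely the paper's own argument: the paper computes $\E(\xi(t+1)\mid\xi(t)) = \left(I-\tfrac{1-\alpha}{n}(I-P)\right)\xi(t)$ (note this expected one-step operator is independent of $k$), plugs in $\xi(0)=n f_2(P)$ so that $\E(\xi(t)) = q_2^{t}\, n f_2$ with $q_2 = 1-\tfrac{(1-\alpha)(1-\lambda_2(P))}{n}$, and then uses convexity of $\phi$ and Jensen to get $\E(\phi(\xi(t)))\ge n^2 q_2^{2t}$, which stays above $\epsilon$ for all $t$ up to $t^\star=\Theta\!\left(\tfrac{n\log(n\twonorm{\xi(0)}/\epsilon)}{(1-\alpha)(1-\lambda_2(P))}\right)$; the \mtwo case is identical with the operator $I-\tfrac{1-\alpha}{2m}L$ and eigenvector $f_2(L)$, exactly as you say. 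The paper stops there and reads the bound on $\E(T_\epsilon)$ off the lower bound on the expected potential. Where you genuinely diverge is the Freedman-type concentration for the rescaled martingale $\hat Z$: the paper makes no attempt to control $Z(t)$ beyond its mean, so your high-probability statement is strictly stronger than what the paper proves, and it would make rigorous the passage from ``$\E[\phi(\xi(t^\star))]>\epsilon$'' to ``$\E(T_\epsilon)=\Omega(t^\star)$'', which the paper leaves implicit. However, this extra step is also where your argument is fragile: for the specific input $\xi(0)=nf_2$ one has $K(0)\le 2n\norm{f_2}_\infty$, so your increment bound $|Z(t+1)-Z(t)|=O(\norm{f_2}_\infty K(0)/n)$ is really $O(\norm{f_2}_\infty^2)$, which is $\Theta(n)$ when $f_2$ is localised (only $\norm{f_2}_\pi=1$ is guaranteed, so $\norm{f_2}_\infty$ can be of order $\sqrt{n}$). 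In that regime the variance proxy $\Theta\!\left(\tfrac{n}{\beta}\norm{f_2}_\infty^4\right)$ already exceeds $\hat Z(0)^2=n^2$ at $t=0$ and Freedman yields nothing; you would need a delocalisation hypothesis on $f_2$, or a genuinely conditional-variance (rather than worst-case-increment) bound, on top of the polynomial bounds on $1/\epsilon$ that you already flag. The Jensen-only route of the paper needs none of these hypotheses, so if you only want the proposition as literally stated, the concentration step can be dropped.
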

\begin{proof}
Set $\beta = n$.
We prove the proposition for the $\mone$. The proof for the $\mtwo$ is analogous. 


Using the notation of \cref{eq:hut}, we have
\begin{align}
\xi(t+1) = \left(I+ \frac{(1-\alpha)}{k}\sum_{i=1}^k (\delta_{XY_i}-\delta_{XX})\right)\xi(t)
\end{align}
By linearity of expectation,

\begin{align}
\E(\E(\xi(t+1)\ |\ \xi(t))) &= \left(I+ \frac{(1-\alpha)}{n}(P-I)\right)\cdot\E(\xi(t))
\end{align}
Hence,
\begin{align}
\E(\xi(t+1)) 
&= \left(I- \frac{(1-\alpha)}{n} (I-P)\right)\cdot\E(\xi(t)) = \left(I- \frac{(1-\alpha)}{n}(I-P)\right)^{t+1}\cdot
\xi(0)
\end{align}
Let $q_2 = \left(1-\frac{1-\alpha}{n}(1-\lambda_2(P)\right)$. 
For our choice of $\xi(0)$, using that $Pf_2 = \lambda_2f_2$, we get

\begin{align}
\E(\xi(t)) = \left(I- \frac{1-\alpha}{n}(I-P)\right)^{t}\beta\cdot f_2 = \left(1-\frac{1-\alpha}{n}(1-\lambda_2(P))\right)^t\beta\cdot f_2 = q_2^t\cdot  f_2  .
\end{align}

Consider the potential $\phi$ and recall that \
\begin{align*}
\phi(\xi(t)) = \frac{1}{2}\sum_{x,y\in V}\pi_x\pi_y(\xi_x(t)-\xi_y(t))^2
\end{align*}
showing that $\phi$ is a convex potential.  Thus, we can use linearity of expectation and Jensen's inequality to conclude that 
\begin{align}
\E(\phi(\xi(t))) \geq \frac{1}{2}\sum_{x,y\in V}\pi_x\pi_y(\E(\xi_x(t)-\xi_y(t)))^2 
\end{align}
Finally, using the definition of the potential we have by \cref{eqn:defiPot} 
\begin{align}
\E(\phi(\xi(t))) \geq \beta^2 q_2^{2t}\left(\|f_2\|_{\pi}^2-\left \langle \one, f_2 \right \rangle_{\pi}^2\right) =  \beta^2 q_2^{2t}
\end{align}
The last equality holds for the orthonormality of eigenvectors of matrix $P$ previously mentioned.
Note that $\twonorm{\xi(0)}=\beta^2$.
After $t=\frac{n \log(n\twonorm{\xi(0)})}{8(1-\alpha)(1-\lambda_2(P))}$ we have

\begin{align} \E(\phi(\xi(t))) &\geq  q_2^{2t} \beta^2
\geq \beta^2
\left(1-\frac{1-\alpha}{n}(1-\lambda_2)\right)^{ \frac{n}{(1-\alpha)(1-\lambda_2(P))} \frac{\log(n\twonorm{\xi(0)}/\epsilon)}{8}  } 
\geq \beta^2 e^{-\frac{\log(n\twonorm{\xi(0)}/\epsilon)}{4}}(1-o(1)) \\ 
&\geq  \beta^2 \left(\frac{\epsilon}{n\twonorm{\xi(0)}}\right)^{\frac{1}{4}}(1-o(1))
> \epsilon,
\end{align}

\end{proof}

\subsection{Proof of Theorem~\ref{thm:mone}.(1)} \label{app:thmMone}
By \cref{pro:potentialdrop},

\begin{align}
\E(\phi(\xi(t+1))) &\leq \left(1-\frac{(1-\alpha)(1-\lambda_2)[2\alpha+(1-\alpha)(1+\lambda_2)(1-k^{-1})]}{n}\right)\phi(\xi(t))\\
&=\left(1-\frac{(1-h\lambda_2)}{n}\right)^t \phi(\xi(t)),
\end{align}
for some suitable constant $h$,
since $\lambda_2 \geq -1$ and $\alpha$ constant with $\alpha \in (0,1)$.
\begin{align*}
\E(\phi(\xi(t))) \leq  \left(1-\frac{h(1-\lambda_2)}{n}\right)^t \phi(\xi(0)).
\end{align*}

By \cref{eq:ponies}, we have that $\phi(\xi(0))\leq n \twonorm{\xi(0)}.$ 
Thus setting $t=3 \frac{n\log(n\twonorm{\xi(0)}/\epsilon)}{h(1-\lambda_2)} =\frac{n\log((n\twonorm{\xi(0)}/\epsilon)^3)}{h(1-\lambda_2)} $, we get

\begin{align}
\E(\phi(\xi(t))) \leq \left( \frac{\epsilon}{n\twonorm{\xi(0)}}\right)^3 \phi(\xi(0))\leq \frac{ \epsilon}{n^2}.
\end{align}
By Markov inequality,
$\Pr(\E(\phi(\xi(t))) \geq \epsilon) \leq 1/n^2$.

The proof of the lower bound can be found in \cref{app:lower}.

\section{\mone -- Concentration Bounds} \label{app:nodeConc}
\omitthis{
\subsection{Proof of \texorpdfstring{\cref{thm:limitingVarExact1}}{}} \label{app:proof4-1}
\begin{proof}
The random variable $\avg(t) = \frac{1}{n}\sum_{x\in V} \xi_x(t)$ is a Martingale (\cref{lem:NodeMartingale}).
Due to our convergence results, we know that as $t \rightarrow \infty$, all node values 
$\xi_x(t)$ converge to the same value. 
Consequently, $\avg(t)$ converges 
to the same value, which we denote by 
$\avg(\infty)$.
This is a random variable 
with expectation $\E[\avg(\infty)] = \avg(0)$, 
since by the Martingale property, for each $t\ge 0$, 
$\E[\avg(t)] = \avg(0)$.
We want to show that the actual value of $\avg(\infty)$ is likely to be close to $\avg(0)$. We do this by showing that $\var(\avg(\infty)) = \lim_{t \rightarrow \infty} \var(\avg(t))$ is small.
We start by 
recalling the assumption that 
$\avg(0)=0$ and using linearity of expectation to obtain
\begin{align*}
\var(\avg(t)) &= \E[\avg(t)^2] - (\E[\avg(t])^2
= \E[\avg(t)^2] = \frac{1}{n^2}
\sum_{x,y\in V} \E[\xi_x(t)\xi_y(t)].
\end{align*}

Fixing $t=T$ arbitrarily, 
Propositions~\ref{prop:duality} 
and~\ref{prop:variance_both_processes1} imply that for any pair of nodes $x$ and $y$,
the three products
$\xi_x(T) \xi_y(T)$, $W^{(x)}(T) W^{(y)}(T)$ and $\widetilde{W}^{(x)}(T) \widetilde{W}^{(y)}(T)$ have the same expectation, so 
\begin{align*}
    \var(\avg(T)) &= \frac{1}{n^2}
    \sum_{x,y} \E[\xi_x(T)\xi_y(T)]\\
    &=\frac{1}{n^2}\sum_{x,y} \E[W^{(x)}(T)W^{(y)}(T)]
    =\frac{1}{n^2}\sum_{x,y} \E[\widetilde{W}^{(x)}(T)\widetilde{W}^{(y)}(T)]. 
\end{align*}
%
%
Let $\xi = \xi(0)$, then by \cref{lem:crwTOstationarydistr}
and \cref{prop:stationaryDistrib} we get
\begin{align*}
    \var(\avg(T)) &= 
    \frac{1}{n^2}\sum_{x,y} \left(\sum_{u,v}\mu(u,v) \xi_u\xi_v \pm 1/n^5\right) 
    =  \sum_{u,v}\mu(u,v) \xi_u\xi_v \pm 1/n^5\\
%
%
%
%
%
    &= \mu_0 \sum_{(u,u) \in S_0} \xi_u^2\ + \mu_1 \sum_{(u,v) \in S_1} \xi_u\xi_v + \mu_+ \sum_{(u,v) \in S_+} \xi_u\xi_v \pm 1/n^5 \\
    & =  (\mu_0-\mu_+) \sum_{(u,u) \in S_0} \xi_u^2 + (\mu_1-\mu_+) \sum_{(u,v) \in S_1} \xi_u\xi_v + \mu_+ \sum_{u,v \in V} \xi_u\xi_v \pm 1/n^5\\
    &= (\mu_0-\mu_+) \sum_{u \in V} \xi_u^2 + (\mu_1-\mu_+) \sum_{(u,v) \in E} \xi_u^2 \pm 1/n^5, 
\end{align*}
using for the last equation that
$\sum_{u,v}\xi_u\xi_v=
(\sum_{u}\xi_u)(\sum_u\xi_u)= 0$
(the assumption that $\avg(0)=0$).
Thus \cref{eq:concentrationFirst2} holds.
For \cref{eq:concentrationFirst2}, observe 
that $\var(\avg(t))$ is non-decreasing.


\end{proof}
}

\omitthis{
\subsection{Proof of \texorpdfstring{\Cref{lem:dualitySupportLemma}}{}}\label{app:proof4-2}
\begin{proof}
Let $F(t)$, $t=1,2,\ldots,T$,
denote the update matrix in step $t$ of the averaging process corresponding to the edge choice $\newChi(t)$ (`$F$' indicates that we are 
referring 
to the \emph{forward process}).
That is, $\xi(t) = F(t)\, \xi(t-1)$, 
viewing vectors $\xi(t)$'s as column vectors.
Thus if $\newChi(t) = (S(t),x(t))$, then matrix $F(t)$ is such that
row $x$ is the vector with $\alpha$ at entry $x$ (diagonal), with $k$ elements $\{\alpha_1, \ \alpha_2, \dots, \alpha_k\}, \ \sum_{i=1}^k\alpha_i = 1 - \alpha$, at entries $u \neq x$, respectively, and~$0$ elsewhere. 
All other rows have $1$ at the diagonal 
and $0$ elsewhere.
That way 
$\xi_u(t) = \alpha \xi_u(t-1) + (1-\alpha)\xi_v(t-1)$,
and $\xi_w(t) = \xi_w(t-1)$ for each $w\neq u$,
as required.
Observe that, 
\begin{eqnarray}
\xi(\newChi) & = & \xi(T) 
\; = \; F_{T} (F_{T-1} ( \ldots (F_{2}
(F_1 \, \xi(0))\ldots))
= \left(\prod_{t=1}^{T} F(T-t)\right) \xi(0).
\label{eq:avg1}
\end{eqnarray}

The expectation over one step of the process, for the forward matrix $F$, is a matrix $Q_1= \E(F)$, with entries $Q_1(x,y)$ given by
\begin{align} 
    Q_1(x,y) &= \; \pi_x (1-\alpha)P(x,y)
    \label{hbfqei}\\
    Q_1(x,x) &= \; (1-\pi_x)+\a \pi_x+(1-\a)\pi_x P(x,x).\label{hchwec}
\end{align}
$Q_1$ hence represents the transition matrix of a random walk process $X(s)$ where the transition from node $x$ to node $y$ is
\begin{equation*}\label{eq:q1}
    \Prob(X(s)=y \mid X(s)=x)=Q_1(x,y).
\end{equation*}
In \cref{sec:relatingProcesses} we will show that, from this, it follows the construction of another transition matrix $Q_2 = Q$ for the combination of two random walks on $G$.
Let $B_t$, $t = 1, 2, \ldots, T$ 
(`$B$' indicates the \emph{backward process})
be the update matrix 
in step $t$ of the random walk process corresponding
to the edge choice $\newChi_{T-t+1} = (u',v')$,
that is, for any fixed node $x$,
$r^{(x)}(t) = B_t\, (r^{(x)}(t-1))$,
viewing vectors $r^{(x)}(t)$'s as column vectors.
This means that matrix $B_t$ is such that 
column $u'$ is the vector with $\alpha$ at entry $u'$ (diagonal), with $k$ elements $\{\alpha_1, \ \alpha_2, \dots, \alpha_k\}, \ \sum_{i=1}^k\alpha_i = 1 - \alpha$, at entries $v' \neq u'$, and $0$ elsewhere. 
All other columns have $1$ at the diagonal 
and $0$ elsewhere.
That way $r^{(x)}_{u'}(t) = \alpha r^{(x)}_{u'}(t-1)$,
$r^{(x)}_{v'}(t) = r^{(x)}_{v'}(t-1) 
+ (1-\alpha) r^{(x)}_{u'}(t-1)$,
and $r^{(x)}_w(t) = r^{(x)}_w(t-1)$ for each 
$w\not\in\{u',v'\}$.
Since nodes $u'$ and $v'$ in step $t$ 
of the random walk process are the same as 
nodes $u$ and $v$ in step $T-t+1$ of
the averaging process, then
$B_t  = (F_{T-t+1})^\top$, where the superscript $\top$ denotes 
matrix transposition.
We have
\begin{eqnarray}
R(T) & = & 
\left(\prod_{t=1}^{T} B(T-t)\right) R(0),
\end{eqnarray}
where $r^{(x)}(0)$ is the unit vector with $1$ 
at position $x$ (indicating the starting node $x$
of the random walk).
Hence we have
\begin{eqnarray*}
    W^{(\newChi^R)} &= W(T) = \xi(0)^{\top}R(T) = \xi(0)^\top\left(\prod\limits_{t=1}^{T} B(T-t)\right)R(0)\\
    &= \xi^\top(0)\left(\prod\limits_{t=1}^{T} B(t)\right) = \xi^\top(0)\left(\prod\limits_{t=1}^{T} F(T-t)\right)^\top \\
    &= \left(\left(\prod\limits_{t=1}^{T} F(T-t)\right)\xi(0) \right)^\top= \xi^\top(T)
\end{eqnarray*}
\end{proof}
}

\subsection{Proof of \texorpdfstring{\cref{prop:stationaryDistrib}}{}}\label{sec:proofOfmu}

\begin{proof}

The $Q$ chain has $|V^Q| = n^2$ many states and any state belongs to exactly one of the three types of sets $S_0, S_1, S_+$.
The stationary distribution $\mu$  is obtained from a system of $n^2$ equations in $n^2$ unknowns, each of the unknowns belonging to one of the three types of sets.
Since the $Q$-chain is irreducible and aperiodic, by definition $\mu$  satisfies the equation $\mu = \mu Q$ and
\begin{align} \label{eq:muCompositionGeneral}
    \mu(u, v) &= \sum_{(x, y) \in \mathcal{N}(u, v)} \mu(x, y)Q((x, y), (u, v)).
\end{align}
Note that for $k>1$ the $Q$-chain is not reversible since the state space of the $Q$-chain is not
bi-directional. Hence, the detailed balanced equations of the form $\mu_iQ_{ij} = \mu_jQ_{ji}$ do not hold. 
In particular, while a transition $(x, y)\rightarrow(u, v)$ with $(x, y)\in S_0$ and $(u, v)\in S_+$ is possible (for example going from distance $0$ to distance $2$), the transition in the opposite direction is not possible ($(u, v)\cancel{\rightarrow}(x, y)$).

In the following we fix a state $(u, v)$ and describe how it can be reached from other states in the $Q$ chain. Let $c=c(u, v)$ be the number of common neighbours of $u$ and $v$. Note that $c=0$ if  $u$ and $v$ have a distance $i > 2$.
For simplicity of notation, we define $f(x, y) := \mu(x, y)Q((x, y),(u, v))$ and $\mathcal{N} := \mathcal{N}(u, v)$, where  $\mathcal{N}(u, v)$ is the set of neighbours of $(u, v)$. \cref{eq:muCompositionGeneral} can be split into different parts, each accounting for a different set $S_i$ of states
\begin{align} \label{eq:muCompositionDetailed}
          \mu(u, v) &= \sum_{\substack{(x,y) \in \\ \mathcal{N} \cap S_0}} f(x,y) + \sum_{\substack{(x,y) \in \\ \mathcal{N} \cap S_1}} f(x,y) + \sum_{\substack{(x,y) \in \\ \mathcal{N} \cap S_+}} f(x,y)
\end{align}
We define $\mu_0 := \mu(u,v)$ if $(u,v) \in S_0$, $\mu_1 := \mu(u,v)$ if $(u,v) \in S_1$ or $\mu_+ := \mu(u,v)$ for $(u,v) \in S_+$.

\paragraph{Case 1: $(u,v) \in S_0$.} In this case  \cref{eq:muCompositionDetailed} becomes \cref{eq:mu0}. The first element accounts for the transitions as described in \cref{eq:matrixQ2_2} and \cref{eq:matrixQ2_5}, while the second element accounts for the transitions as described in \cref{eq:matrixQ3_1}. 
Note that there is no transition from $\mu_+$, since, in the case where the random walks are on different nodes, only at most one of them can move and cannot get to distance zero.
\begin{align}
        \mu_0 &= \sum_{(x, y) \in \mathcal{N} \cap S_0}
        f(x, y) \; + \sum_{(x, y) \in \mathcal{N} \cap S_1}
        f(x, y) \notag \\
        &= \mu_0\left(\frac{n-1}{n} + \pi \alpha^2 + d\pi (1-\alpha)^2\cdot \frac{1}{kd}\right) + \mu_1\left(2 \pi (1-\alpha)\cdot \frac{1}{d} \cdot d\right) \label{eq:mu0}
\end{align}

\paragraph{Case 2: $(u,v) \in S_1$.}
In this case, we can get to $(u, v)$ from states in $S_0$, $S_1$ or $S_+$.

First we consider states in $S_0$.
This happens when $x$ is a common neighbour of $u$ and $v$ and there is a transition $(x, x) \rightarrow (u, v)$ with $(x, x)\in S_0$. Recall that $c$ is the number of common neighbours of $u$ and $v$. If there are $c$ common neighbours, there could be $c$ many transitions of this type (see \cref{fig:from0}).

Next, we consider the case where we are coming from $S_1$. We can get from a state $(u, x)\in S_1$ ($(x, v)\in S_1$) to $(u, v)$ if and only if $(x, v)\in S_1$ ($(u, x)\in S_1$) (see \cref{fig:from1}). 
Again, this happens when $x$ is a common neighbour of $u$ and $v$.

Finally, we consider states in $S_+$. If $x$ is not a common neighbour of $u$ and $v$, then there is a transition $(u, x)\rightarrow (u, v)$ and $ (u, x) \in S_+$.
Therefore $u$ has $c$ neighbours $x$ for which $(u, x) \rightarrow (u, v)$ and $(u, x) \in S_1$, one direct link to $v$, and $d - c - 1$ neighbours $x$ with a transition $(u, x)\rightarrow (u, v)$ and $(u, x) \in S_+$. The same observation applies to node $v$. 

Putting the 3 cases together \cref{eq:muCompositionDetailed} becomes
\begin{align}
    \begin{split}
        \mu_1=& \sum_{(x, y) \in \mathcal{N} \cap S_0}\mu(x, y)Q((x, y), (u, v)) \; + \sum_{(x, y) \in \mathcal{N} \cap S_1}\mu(x, y)Q((x, y), (u, v))\\
        &+ \sum_{(x, y) \in \mathcal{N} \cap S_+}\mu(x, y)Q((x, y), (u, v)\notag
    \end{split} \\
    \begin{split}
        =& \; \mu_0\left(2\pi\alpha(1-\alpha)\frac{1}{d} + \pi c (1-\alpha)^2 \cdot\frac{k-1}{kd(d-1)}\right) + \mu_1 \left(\frac{n-2}{n} + 2\pi \alpha + 2\pi c(1-\alpha)\cdot \frac{1}{d}\right)\\
        &+ \; \mu_+\left(2\pi(d-c-1)(1-\alpha)\cdot\frac{1}{d}\right). \label{eq:mu1}
    \end{split}
\end{align}

The elements of \cref{eq:mu1} account for the transitions as described in \cref{eq:matrixQ2_1}, \cref{eq:matrixQ2_3}, \cref{eq:matrixQ2_4} and \cref{eq:matrixQ3_3}.

In order to check that $c=c(u, v)$ vanishes simultaneously in the evaluation of \cref{eq:mu1} we can extract the coefficients of $c$ on the LHS and RHS to verify this. After some cancellation and by substituting $\mu_*$ taken from \cref{thm:limitingVarExact1}, we obtain 
\begin{align*}
   0 &= \frac{(1-\a)(k-1)}{k(d-1)}\mu_0 + 2\mu_1 -2\mu_+\\
   &= (1-\alpha)(k-1) + (d-1)\gamma - (d\gamma - 2\alpha k),
\end{align*}
Which can be checked to be correct.

\paragraph{Case 3: $(u, v) \in S_+$.} In this case we obtain
\begin{align}
    \mu_+ =& \sum_{(x, y) \in \mathcal{N} \cap S_0}
    f(x, y)
    + \sum_{(x, y) \in \mathcal{N} \cap S_1}
    f(x, y)
    + \sum_{(x, y) \in \mathcal{N} \cap S_+}
    f(x, y)\notag \\
    =& \mu_+ \left(\frac{n-2}{n} + 2\pi \alpha + \pi 2(d-c)(1-\alpha)\frac{1}{d}\right) + \mu_1 \left(2c\pi (1-\alpha)\frac{1}{d}\right) +  \mu_0 \left(c \pi (1-\alpha)^2\frac{k - 1}{kd(d-1)}\right) \label{eq:mu2}
\end{align}

The elements of \cref{eq:mu2} are again described in \cref{eq:matrixQ2_1}, \cref{eq:matrixQ2_3}, \cref{eq:matrixQ2_4} and \cref{eq:matrixQ3_3}. It can be checked that $c$ cancels out also in this case, following the previous argument.

For $dis(u,v) > 2$, then $c=0$. \cref{eq:mu2} still holds and becomes
\begin{align*}
    \mu_+ =& \mu_+\left(\frac{n-2}{n} + 2\pi \alpha + \pi 2(1-\alpha)\right) \\
    1 =& 1 - \pi 2 + 2\pi \alpha + \pi 2 - 2\pi \alpha
\end{align*}

The final equation we need is the following, which tells us that the stationary distribution needs to sum to $1$. As $|S_0|=n$, and $|S_1|=2|E|$, for d-regular graphs this is equivalent to
\begin{equation}\label{eq:sumto1}
        1 = n\mu_0 + 2 |E| \mu_1 + (n^2 - 2|E| - n)\mu_+ = n\mu_0 + nd \mu_1 + n(n - d - 1)\mu_+ .
\end{equation}

If we now input the values for $\mu_+$ from \cref{eq:threeSets} in \cref{eq:mu0},  \cref{eq:mu1} and \cref{eq:mu2} the equations will be satisfied and $c$ will disappear, proving their validity for any of the $n^2$ equations of the system.
\begin{figure}
\begin{minipage}{.45\textwidth}
    \centering
    \includegraphics[width=.9\textwidth]{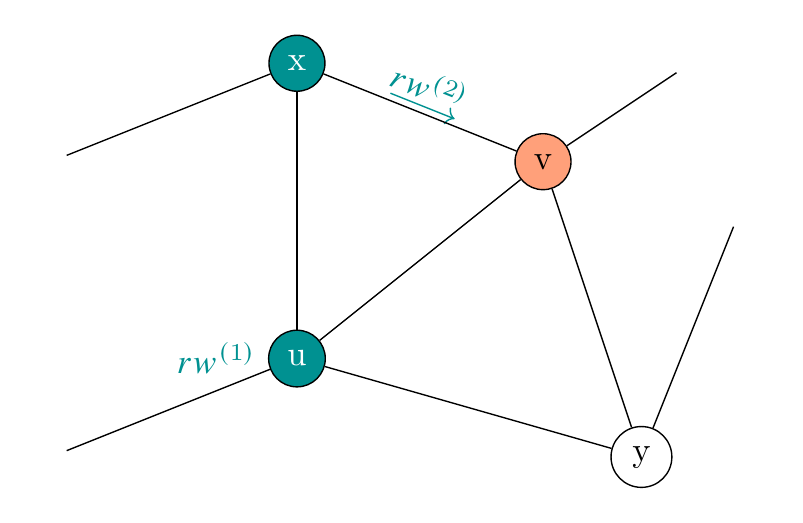} 
    \caption{The random walks are located in the blue nodes. $rw^{(1)}$ is in $u$, the state $(u, x) \in S_1$. $rw^{(2)}$ moves from $x$ to $v$, obtaining the state $(u, v) \in S_1$}
    \label{fig:from1}
\end{minipage}\qquad
\begin{minipage}{.45\textwidth}
    \centering
    \includegraphics[width=.9\textwidth]{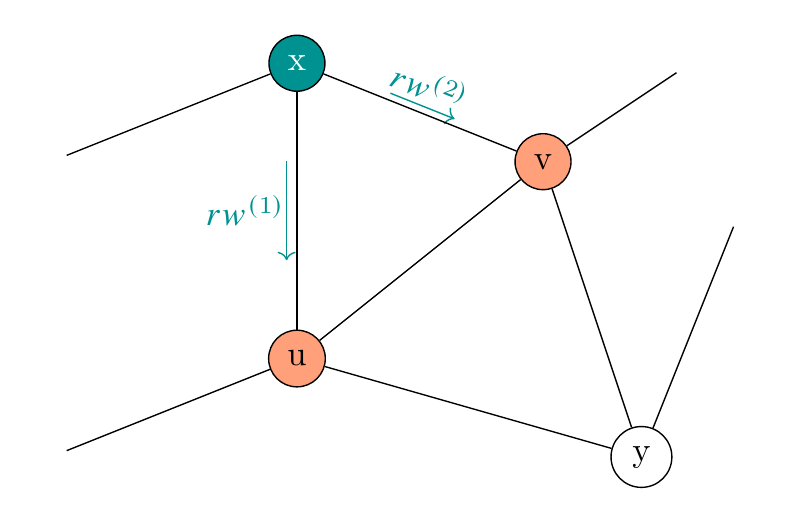} 
    \caption{Both $rw^{(1)}$ and $rw^{(2)}$ are in $x$, the state $(x, x) \in S_0$. Later, $rw^{(1)}$ moves from $x$ to $u$ and $rw^{(2)}$ moves from $x$ to $v$, obtaining again $(u, v) \in S_1$}
    \label{fig:from0}
\end{minipage}
\end{figure}
\end{proof}

\section{\mtwo} \label{sec:edgeModelProofs}
\subsection{Proof of \cref{thm:mtwo}(1) -- Convergence Time}
We start with the following proposition.
\begin{proposition}\label{pro:Colin}
Let $G$ be a connected graph with $m$ edges and let $\avg(0)= \frac{1}{n} \sum_i \xi_i(0)$. Let $\bar{\phi}_V(\xi(t)) = \frac{1}{2n}\sum_{x,y \in V} (\xi_x(t)-\xi_y(t))^2$.
\begin{enumerate}[(i)]
\item $\avg(t) $ is a martingale and thus $\E( \avg(t))=\avg(0)$. 
\item \[
\E (\bar{\phi}_V(\xi(t))) \le \brac{1-\frac{\a(1-\a)}{m} \l_2(L)}^t \bar{\phi}_V(\xi(0)).
\]
\end{enumerate}
\end{proposition}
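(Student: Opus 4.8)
The plan is to establish both parts by direct one-step expectation computations, reducing part (ii) to a Laplacian quadratic form that the spectral gap controls. Throughout I would condition on $\xi(t)$ and average over the uniformly sampled directed edge, of which there are $2m$.

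For part (i), note that when the directed edge $(u,v)$ is chosen only coordinate $u$ changes, and $\sum_x \xi_x$ changes by exactly $(1-\alpha)(\xi_v-\xi_u)$. Averaging over the $2m$ directed edges, the contributions of $(u,v)$ and $(v,u)$ cancel by antisymmetry, so the expected change of $\sum_x \xi_x$ is $0$; dividing by $n$ gives $\E(\avg(t+1)\mid\xi(t))=\avg(t)$, i.e.\ $\avg(t)$ is a martingale, and iterating yields $\E(\avg(t))=\avg(0)$.

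For part (ii), I would fix $\xi=\xi(t)$ and write the one-step increment at the chosen coordinate as $\delta=\xi_u'-\xi_u=-(1-\alpha)(\xi_u-\xi_v)$. Using the identity $\bar\phi_V(\xi)=\sum_x\xi_x^2-n\avg^2=\sum_x(\xi_x-\avg)^2$, a short expansion gives $\bar\phi_V(\xi')-\bar\phi_V(\xi)=2\delta(\xi_u-\avg)+\delta^2(1-1/n)$. Introducing the centered vector $\eta=\xi-\avg\,\one$ and averaging over the directed edge, both the first- and second-order terms collapse into multiples of the Laplacian quadratic form, since $\sum_{(u,v)}(\eta_u-\eta_v)\eta_u=\eta^\top L\eta$ and $\tfrac12\sum_{(u,v)}(\eta_u-\eta_v)^2=\eta^\top L\eta$ for $L=D-A$. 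Collecting the coefficients, I expect to obtain
\begin{equation*}
\E\!\left(\bar\phi_V(\xi(t+1))\mid\xi(t)\right)=\bar\phi_V(\xi(t))-\frac{(1-\alpha)\bigl(\alpha+(1-\alpha)/n\bigr)}{m}\,\eta^\top L\eta .
\end{equation*}

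To finish I would discard the favourable term by using $\alpha+(1-\alpha)/n\ge\alpha$, and then invoke the spectral gap: because $\eta\perp\one$ and $L\one=0$, the variational characterisation gives $\eta^\top L\eta\ge\lambda_2(L)\|\eta\|_2^2$, while $\|\eta\|_2^2=\bar\phi_V(\xi(t))$ by the identity above. This produces the one-step contraction $\E(\bar\phi_V(\xi(t+1))\mid\xi(t))\le\bigl(1-\tfrac{\alpha(1-\alpha)}{m}\lambda_2(L)\bigr)\bar\phi_V(\xi(t))$, and taking total expectations and iterating over $t$ gives the stated bound. I expect the main obstacle to be the bookkeeping of the quadratic term $\delta^2$ and verifying that the cross terms reorganise exactly into $\eta^\top L\eta$; the one genuinely structural point is the reduction to the centered vector (legitimate because $\bar\phi_V$ is shift-invariant and $L\one=0$), which is what makes the restriction of $L$ to $\one^\perp$, and hence the use of $\lambda_2(L)$, valid.
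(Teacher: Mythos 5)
Your proof is correct and follows essentially the same route as the paper's: a one-step computation of the expected change of the second moment, reduction to the Laplacian quadratic form with coefficient $\alpha(1-\alpha)/m$, and the spectral bound $\eta^\top L\eta\ge\lambda_2(L)\|\eta\|_2^2$ for $\eta\perp\one$. The only cosmetic difference is that you center the vector up front, exploiting the shift-invariance of $\bar\phi_V$, which lets you skip the paper's separate Jensen/martingale argument for the term $\langle\xi',\one/\sqrt{n}\rangle^2$ and even yields the marginally sharper coefficient $\alpha+(1-\alpha)/n$ before you discard the improvement.
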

\begin{proof}
First we prove (i). Again, we write $\xi$ instead of $\xi(t)$  and  $\xi'$ instead of $\xi(t+1)$ in  the  proof  of  this lemma. 
Further, denote the value of a variable $Z(t)$ by $Z=Z(t)$ and $Z'=Z(t+1)$. By linearity of expectation
\[
\E(\xi'_x \mid \xi, \text{edge } (x,y) \text{ chosen})=\frac 12 \xi_x+ \frac 12 (\a \xi_x+ (1-\a) \xi_y)
=\frac 12 (1+\a)\xi_x+ \frac12 (1-\a)\xi_y.
\]
Thus, for $x \in V$, 
\begin{flalign*}
\E (\xi_x' \mid \xi)=& \brac{1-\frac{d_i}{m}}\xi_x+ \frac 1m \brac{\sum_{y \sim x} \frac 12 (1+\a)\xi_x+ \frac12 (1-\a)\xi_y}
=\brac{1-(1-\a)\frac{d_x}{2m}}\xi_x+\frac{1-\a}{2m} \sum_{y \sim x} \xi_y.
\end{flalign*}
Let $Y=\sum_{x\in V} \xi_x$ and  $Y'=\sum_{x\in V} \xi'_x$. Then
\begin{flalign*}
\E(Y' \mid \xi)=&\sum_{x\in V} \brac{\brac{1-(1-\a)\frac{d_x}{2m}}\xi_x+\frac{1-\a}{2m} \sum_{y \sim x} \xi_y}
=\sum_{x\in V} \xi_x- \frac{1-\a}{2m} \sum_{x\in V} d_x \xi_x +\frac{1-\a}{2m}\sum_{y\in V} \xi_y \sum_{x \sim y} 1=Y.
\end{flalign*}

Now we are ready to prove (ii). Given $x\in V$, by linearity of expectation we get
\[
\E\left((\xi'_x)^2 \mid \xi, \text{edge } (x,y) \text{ chosen}\right)=\frac 12 \xi_x^2+ \frac 12 (\a \xi_x+ (1-\a) \xi_y)^2.
\]
\[
\E\left( (\xi_x')^2 \mid \xi\right)=\xi_x^2\brac{1-\frac{(1-\a^2)}{2m}d_x}
+\sum_{y \sim x} \frac{\a(1-\a)}{m} \xi_x\xi_y
+\sum_{x\sim y} \frac{(1-\a)^2}{2m} \xi_y^2.
\]
Thus 
\begin{flalign}
\E\left(\sum_i (\xi'_x)^2 \mid \xi \right)=& \sum_{x\in V} \xi_x^2\brac{1-\frac{(1-\a^2)}{2m}d_x}+
\frac{\a(1-\a)}{m}\sum_{x\in V} \sum_{y \sim x}\xi_x\xi_y+
\frac{(1-\a)^2}{2m}\sum_{x\in V}  \xi_x^2 d_x\nonumber\\
=&\sum_{x\in V} \xi_x^2\brac{1-\frac{\a(1-\a)}{m}d_x}+
\frac{\a(1-\a)}{m}\sum_i \sum_{y \sim x}\xi_x\xi_y \nonumber\\
=& \sum \xi_x^2 -\frac{\a(1-\a)}{m}
\brac{\xi^{\top}D\xi-\xi^{\top}A\xi} 
= \sum_{x\in V} \xi_x^2 -\frac{\a(1-\a)}{m}
\brac{\xi^{\top}L\xi}\label{LA}
\end{flalign}
Recall that $L$ represents the graph Laplacian of $G$. $L$ is symmetric positive semi-definite, so it has an orthonormal basis $U=(f_1,...,f_n)$ such that $L=U^{\top}\Lambda U$. In particular $f_1=(1/\sqrt{n}) \ul 1$. Thus, $\xi=\sum_{i=1}^n \pbrac{\xi,f_i} f_i$ and $L\xi=\sum_{i=1}^n \l_i\pbrac{\xi,f_i} f_i$,
where $\pbrac{a,b}$ is the standard inner product $\sum a_ib_i$ with no weights.
Hence, as $\l_1=0$,
\[
L\xi \ge \l_2 \sum_{i=2}^n\pbrac{\xi,f_i} f_i, \qquad \text{and}
\]
\begin{equation} \label{eq:LC}
\xi^{\top}L\xi \ge  \l_2 \sum_{i=2}^n \pbrac{\xi,f_i}^2
=\l_2 \brac{\sum_{x\in V} \xi_x^2-\pbrac{\xi,f_1}^2}.
\end{equation}
As $\E\left(\sum_{x\in V} \xi_x\right)^2 \ge \left(\E \sum_{x\in V}\xi_x\right)^2$, and $Y(t)=\sum_{x\in V} \xi_x$ is a martingale, we have
\begin{equation}\label{LB}
\E \pbrac{\xi',f_1}^2=\frac 1n \E\left(\sum_{x\in V} \xi_x'\right)^2 \ge \frac 1n \left(\E \left(\sum _{x\in V}\xi_x'\right)\right)^2
=\frac 1n \left(\sum_{x\in V}\xi_x\right)^2=\pbrac{\xi,f_1}^2.
\end{equation}

We define $\bar{\phi}_V(\xi(t)) = \frac{1}{2n}\sum_{x,y \in V} (\xi_x(t)-\xi_y(t))^2$.
Note that $\bar{\phi}_V(\xi(t))=\sum \xi_x^2- \frac 1n (\sum \xi_x)^2 = \sum \xi_x^2 - \pbrac{\xi',f_1}^2$. Then we use linearity of expectation once more, yielding
\begin{flalign*}
\E(\bar{\phi}_V(\xi')) &= \E\left(\sum (\xi'_x)^2\right)-\E \pbrac{\xi',f_1}^2
\le \E\left(\sum_{x} (\xi'_x)^2\right)- \pbrac{\xi,f_1}^2\\
&\le \sum_{x\in V} \xi_x^2 -\frac{\a(1-\a)}{m}
\brac{\xi^{\top}L\xi} - \pbrac{\xi,f_1}^2
\le \bar{\phi}_V(\xi) \brac{1-\frac{\a(1-\a)}{m} \l_2}.
\end{flalign*}
Where the subsequent steps come from \eqref{LB}, \eqref{LA} and \eqref{eq:LC}, respectively.
It follows that
\[
\E(\bar{\phi}_V(\xi(t))) \le \brac{1-\frac{\a(1-\a)}{m} \l_2}^t \bar{\phi}_V(\xi(0)).
\]

Relating $\bar{\phi_v}$ with $\phi$ one can use the same arguments to conclude as in the proof of \cref{thm:mone}.(1).
\end{proof}
We can now prove \cref{thm:mtwo}(1).
\begin{proof}
The convergence time follows from
\cref{pro:Colin}.
The concentration bound from
\cref{thm:limitingVarExact1}
The time-dependent concentration bound, for general graphs, follows from \cref{COR2}.(iii).
\end{proof}

\subsection{Proof of \cref{thm:mtwo}(2) -- Concentration Bounds}
\extend{The result for the concentration bound can easily be obtained from our results on the $\mone$ (see the beginning of \cref{sec:concentration} for details).}
The concentration follows from \cref{thm:mone} since for regular graphs $\mone$ and $\mtwo$ are identical.

\section{Auxiliary Results}
   \begin{lemma}\label{lemma:basicExpectationVec2}
Let $\E$ denote expectation over the random nodes $X$, and $Y_i$. Then,
\begin{enumerate}
\item\label{item:bev21} \[\E\left(\|\delta_{XX}\xi\|_{\pi}^2\right) =\E\left \langle \xi, \delta_{XX}\xi \right \rangle_{\pi}=\frac{1}{n}\|\xi\|_{\pi}^2 = \frac{1}{n}\left \langle \xi, \xi \right \rangle_{\pi}\]
\item\label{item:bev22} \[\E\left \langle \xi, \delta_{XY_i}\xi \right \rangle_{\pi} = \frac1{n}\left \langle \xi,P\xi \right \rangle_{\pi}\]
\item\label{item:bev23} \[\E\|\delta_{X Y_i}\xi \|_{\pi}^2 = \frac{1}{n} \|\xi\|_{\pi}^2\]
\item\label{item:bev24} for $i\neq j$, 
\[\E\left\langle \delta_{XY_i}\xi, \delta_{XY_j}\xi \right \rangle_{\pi}= \frac{1}{n}\left \langle \xi, P^2\xi \right \rangle_{\pi}\]
\end{enumerate}
\end{lemma}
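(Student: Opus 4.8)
The plan is to reduce all four identities to a single elementary observation about the matrices $\delta_{xy}$ and then average over the random draws using the tower property. First I would record that, by \cref{eq:hut}, $\delta_{xy}\xi = \xi_y\, e^{(x)}$, i.e.\ this vector is supported only on coordinate $x$, where it equals $\xi_y$. Consequently, for any four indices,
\[
\langle \delta_{xy}\xi,\ \delta_{x'y'}\xi\rangle_\pi \;=\; \ind{x=x'}\,\pi_x\,\xi_y\,\xi_{y'},
\]
since the two vectors have disjoint support unless $x=x'$. In particular $\langle \xi,\delta_{xy}\xi\rangle_\pi=\pi_x\xi_x\xi_y$ and $\|\delta_{xy}\xi\|_\pi^2=\pi_x\xi_y^2$. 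Establishing this one formula does all the structural work; what remains is choosing $(x,y),(x',y')$ appropriately and taking expectations.

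For the averaging I would use two facts about the walk. Writing $X$ for the uniformly chosen node and $Y_i$ for a uniformly chosen neighbour of $X$, the definition of the walk gives the conditional-expectation identity $\E[\xi_{Y_i}\mid X=x]=(P\xi)_x$, and, given $X$, the samples $Y_i$ and $Y_j$ (for $i\neq j$) are independent. I also need that $\pi$ is stationary for $P$ and that $P$ is self-adjoint for $\langle\cdot,\cdot\rangle_\pi$ (reversibility); these give $\sum_x\pi_x(Pg)_x=\sum_x\pi_x g_x$ for any $g$, and $\langle P\xi,P\xi\rangle_\pi=\langle\xi,P^2\xi\rangle_\pi$, the latter being exactly the orthonormality/self-adjointness property already invoked in \cref{pro:potentialdrop}.

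With these in hand each item is a one-line computation. For \eqref{item:bev21}, both quantities equal $\pi_X\xi_X^2$ pointwise, so $\E_X[\pi_X\xi_X^2]=\tfrac1n\sum_x\pi_x\xi_x^2=\tfrac1n\|\xi\|_\pi^2$. For \eqref{item:bev22}, $\langle\xi,\delta_{XY_i}\xi\rangle_\pi=\pi_X\xi_X\xi_{Y_i}$; taking $\E[\cdot\mid X]$ replaces $\xi_{Y_i}$ by $(P\xi)_X$, and then $\E_X$ yields $\tfrac1n\langle\xi,P\xi\rangle_\pi$. For \eqref{item:bev23}, $\|\delta_{XY_i}\xi\|_\pi^2=\pi_X\xi_{Y_i}^2$, and since $\E[\xi_{Y_i}^2\mid X=x]=(P\eta)_x$ for the vector $\eta$ with $\eta_y=\xi_y^2$, stationarity collapses $\sum_x\pi_x(P\eta)_x=\sum_x\pi_x\eta_x=\|\xi\|_\pi^2$, giving $\tfrac1n\|\xi\|_\pi^2$. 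For \eqref{item:bev24}, $\langle\delta_{XY_i}\xi,\delta_{XY_j}\xi\rangle_\pi=\pi_X\xi_{Y_i}\xi_{Y_j}$; conditional independence of $Y_i,Y_j$ factorises the inner expectation into $(P\xi)_X^2$, whence $\E_X[\pi_X(P\xi)_X^2]=\tfrac1n\langle P\xi,P\xi\rangle_\pi=\tfrac1n\langle\xi,P^2\xi\rangle_\pi$.

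The only steps that go beyond bookkeeping, and which I would therefore flag, are in the last paragraph: the use of stationarity of $\pi$ in \eqref{item:bev23} to turn $\sum_x\pi_x(P\eta)_x$ back into $\|\xi\|_\pi^2$, and, above all, the conditional independence of $Y_i$ and $Y_j$ in \eqref{item:bev24}, which is precisely what produces the square $(P\xi)_X^2$ and hence the $P^2$ in the statement. Everything else follows from the disjoint-support identity and linearity of expectation.
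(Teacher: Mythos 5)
Your proposal is correct and follows essentially the same route as the paper: identify each inner product pointwise as a term of the form $\pi_x\xi_y\xi_{y'}$ (via the support observation for $\delta_{xy}\xi$) and then average over $X$ and the $Y_i$, using the same conditional independence of $Y_i,Y_j$ given $X$ that the paper's proof of item~(4) implicitly assumes when it writes the joint law as $P(x,y)P(x,y')$. The only cosmetic difference is that you invoke stationarity and self-adjointness of $P$ with respect to $\langle\cdot,\cdot\rangle_\pi$ where the paper manipulates the explicit sums using reversibility $\pi_xP(x,y)=\pi_yP(y,x)$; for this chain these are the same fact.
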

\begin{proof}
We will go over the items one by one.

\begin{enumerate}
\item
Clearly,
\begin{align*}
\|\delta_{XX}\xi\|_{\pi}^2 =\left \langle \xi, \delta_{XX}\xi \right \rangle_{\pi} = \pi_X\xi_X^2.
\end{align*}
Taking expectation, we have \[\E( \pi_X\xi_X^2) = \frac{1}{n}\sum_{x\in V} \pi_x\xi_x^2 =\frac{1}{n}\left \langle \xi, \xi \right \rangle_{\pi} = \frac{1}{n}\|\xi\|_{\pi}^2 \]

\item We have
\[\E\left \langle \xi, \delta_{XY_i}\xi \right \rangle_{\pi} = \E(\pi_X\xi_X\xi_{Y_i}) =  \frac{1}{n}\sum_{x,y\in V} \pi_x\xi_x\xi_yP(x,y) = \frac{1}{n}\left \langle \xi,P\xi \right \rangle_{\pi}\]


\item We have
\begin{align*}
\E\|\delta_{X Y_i}\xi \|_{\pi}^2= \E(\pi_X
\xi_{Y_i}^2) = \frac{1}{n}\sum_{x,y\in V} \pi_xP(x,y)\xi_y^2 =  \frac{1}{n}\sum_{x,y\in V} \pi_yP(y,x)\xi_y^2 = \frac{1}{n} \sum_{x\in V} \pi_y\xi_y^2
\end{align*}
\item We have,
\begin{align*}
\E\left\langle \delta_{XY_i}\xi, \delta_{XY_j}\xi \right \rangle_{\pi} = \E(\pi_x\xi_{Y_i}\xi_{Y_j}) &= \sum_{x,y,y'\in V} \frac{P(x,y)P(x,y')}{n}\pi_x\xi_y\xi_{y'}\\
&= \sum_{x,y,y'\in V} \pi_y\frac{P(y,x)P(x,y')}{n}\xi_{y}\xi_{y'}\\
&=\sum_{y,y'\in V} \pi_y\frac{P^2(y,y')}{n}\xi_y\xi_{y'}=\frac{1}{n}\left \langle \xi, P^2\xi \right \rangle_{\pi}\\
\end{align*}
\end{enumerate}
This completes the proof.
\end{proof}
Recall that $\D=\D(G)$ is maximum degree of $G$, $i(G)$ is the isoperimetric number defined as
$i(G)= \min_{0 <|S|\le n/2}
|E(S: \ol S)|/{|S|},$  where $S \subset V$ and
$E(S: \ol S)$ are the edges from $S$ to $\ol S=V \sm S$. 

We have the following corollary.

\begin{corollary}\label{COR2}
We have
\begin{enumerate}[(i)]
\item  $\l_2(L) \ge i(G)^2/2\D$ and thus
\[
\E (\phi_V(t)) \le \brac{1-\frac{\a(1-\a)}{2m} \frac{i(G)^2}{\D}}^t \phi(0).
\]
\item Consider the $\mone$. Let $K=\max_i X_i(0)- \min_i X_i(0)$, then  $$\V (M(t)) \le t\left(\frac{\Delta}{2m}  K\right)^2 .$$
\item Consider the $\mtwo$. Let $K=\max_i X_i(0)- \min_i X_i(0)$, then  $$\V (\avg(t)) \le \frac{t}{n^2} K^2 .$$
\end{enumerate}
\end{corollary}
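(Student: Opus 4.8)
The corollary has three parts, which I would treat separately. \textbf{Part (i)} is the discrete Cheeger (Mohar) inequality followed by a substitution. The lower bound $\l_2(L) \ge i(G)^2/(2\D)$ is classical, and I would prove it from the Rayleigh-quotient characterisation $\l_2(L) = \min_{f\perp \one} \frac{\sum_{\{u,v\}\in E}(f_u-f_v)^2}{\sum_{v}f_v^2}$. Taking the eigenvector $f$ realising $\l_2(L)$, I would sort the vertices by $f$-value and split $f$ about a median into a positive and a negative part, each supported on at most $n/2$ vertices. For the positive part $g$, summing the isoperimetric inequality over threshold cuts (the co-area argument) gives $\sum_{\{u,v\}\in E}|g_u^2-g_v^2| \ge i(G)\sum_v g_v^2$, while Cauchy--Schwarz together with $\sum_{\{u,v\}}(g_u+g_v)^2 \le 2\D\sum_v g_v^2$ gives $\sum_{\{u,v\}}|g_u^2-g_v^2| \le \sqrt{2\D}\,\sqrt{\sum_{\{u,v\}}(g_u-g_v)^2}\,\sqrt{\sum_v g_v^2}$. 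Rearranging yields $\sum_{\{u,v\}}(g_u-g_v)^2 \ge \frac{i(G)^2}{2\D}\sum_v g_v^2$, and combining the two parts gives $\l_2(L)\ge i(G)^2/(2\D)$. The stated bound on $\E(\phi_V(t))$ then follows immediately by inserting this into Proposition~\ref{pro:Colin}(ii), since $\frac{\a(1-\a)}{m}\l_2(L) \ge \frac{\a(1-\a)}{2m}\frac{i(G)^2}{\D}$.

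\textbf{Parts (ii) and (iii)} are variance bounds for a martingale, handled by the same two-step recipe. First, $M(t)$ is a martingale by Lemma~\ref{lem:NodeMartingale} and $\avg(t)$ is a martingale by Proposition~\ref{pro:Colin}(i); since the starting values are deterministic, the martingale increments are orthogonal, so $\V(M(t)) = \sum_{s=1}^t \E[(M(s)-M(s-1))^2]$ and likewise $\V(\avg(t)) = \sum_{s=1}^t \E[(\avg(s)-\avg(s-1))^2]$ (the cross terms $\E[\Delta_s\Delta_{s'}]$, $s<s'$, vanish by conditioning on the history up to step $s'-1$). Second, I would bound each increment deterministically. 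The crucial ingredient is that each update replaces one node value by a convex combination of node values, so the discrepancy $\max_i\xi_i(s)-\min_i\xi_i(s)$ is non-increasing and hence at most $K$ for every $s$; in particular any two node values differ by at most $K$ at every step.

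For the Node Model only the selected node $u$ changes, so $M(s)-M(s-1) = \frac{d_u}{2m}(1-\a)\brac{\frac1k\sum_{i}\xi_{v_i}(s-1) - \xi_u(s-1)}$; the bracketed term is a difference of two quantities lying in $[\min_i\xi_i(0),\max_i\xi_i(0)]$ and so is at most $K$ in absolute value, which with $d_u\le\D$ and $1-\a\le 1$ gives $|M(s)-M(s-1)|\le \frac{\D}{2m}K$; summing the squared increments over the $t$ steps yields (ii). For the Edge Model the computation is identical, with $\avg(s)-\avg(s-1) = \frac1n(1-\a)(\xi_v(s-1)-\xi_u(s-1))$, giving $|\avg(s)-\avg(s-1)|\le K/n$ and hence (iii). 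The only genuinely non-trivial ingredient is the Cheeger lower bound in (i), and I expect the main obstacle to be presenting its sorting/threshold-cut step cleanly (it may alternatively just be cited); the variance bounds reduce to the orthogonality of martingale increments combined with the elementary fact that the range of the node values never increases, so each per-step change is controlled by the initial discrepancy $K$.
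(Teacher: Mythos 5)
Your proofs of parts (ii) and (iii) follow essentially the same route as the paper: the martingale-increment decomposition of the variance (the paper writes $\V(M(t))\le\sum_{s=0}^{t-1}\E[(M(s+1)-M(s))^2]$ and simply asserts the per-step bound as ``obvious''), with your justification via the non-increasing discrepancy filling in exactly the detail the paper omits, and the per-step increments $\frac{\D}{2m}K$ and $K/n$ matching. The only divergence is in part (i): the paper does not re-prove the Cheeger-type bound but cites Mohar's two-sided inequality $\D-\sqrt{\D^2-i(G)^2}\le\lambda_2(L)\le 2i(G)$ and derives $\lambda_2(L)\ge i(G)^2/(2\D)$ by the elementary estimate $1-x/2\ge\sqrt{1-x}$, whereas you reconstruct the classical threshold-cut/Cauchy--Schwarz proof from the Rayleigh quotient. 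Both are valid; your version is self-contained but must handle the standard subtlety that the truncated half of the eigenvector is not orthogonal to $\one$ (so its Rayleigh quotient must be compared to $\lambda_2$ by a restriction argument rather than by the variational characterisation directly), which you flag yourself and which disappears entirely if one just cites the result as the paper does.
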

\begin{proof}
First we show (i).
 The value of $\l_2(L)$ is bounded by\footnote{B. Mohar. Isoperimetric numbers of graphs. JCT B 47, 274--291 (1989)},
\[
\D-\sqrt{\D^2-i(G)^2} \le \l_2(L) \le 2i(G).
\]
As $1-x/2 \ge \sqrt{1-x}$, 
\[
\l_2(L) \ge \D \brac{1-\sqrt{1-\frac{i(G)^2}{\D^2}}}\ge \frac{i(G)^2}{2\D}.
\]
Thus, together with the above inequality and  \cref{pro:Colin}.(ii) we obtain (i).

We now prove (ii)
As $M(t)$ is a Martingale  we have
\[
\V(M(t))\leq \sum_{s=0}^{t-1} \E \left((M(s+1)-M(s))^2
\right).
\]
Its obvious that $(Y'-Y)^2 \le \left(\frac{\Delta}{2m}  K\right)^2$. 

\[
\V(M(t))\leq t\left(\frac{\Delta}{2m}  K\right)^2.
\]

Now we are ready to proof (iii).
Let
$Y=\sum_{i=1}^n \xi_i$ and  $Y'=\sum_{i=1}^n \xi'_i$
As $Y$ is a Martingale (\cref{pro:Colin}) we have
\[
\V(Y(t))\leq \sum_{s=0}^{t-1} \E \left((Y(s+1)-Y(s))^2\right).
\]
Its obvious that $(Y'-Y)^2 \le K^2$. Thus as $\avg(t)=Y(t)/n$,
\[
\V(\avg(t))=\frac{1}{n^2} \V(Y(t))= \frac{t}{n^2} K^2.
\]

\end{proof}

\newpage
\section{The \texorpdfstring{\dualprocess}{}, example with $k=2$} \label{app:secondExample}
\begin{figure}[b]
\centering
\begin{minipage}{.41\textwidth}
    \includegraphics[width=.9\textwidth]{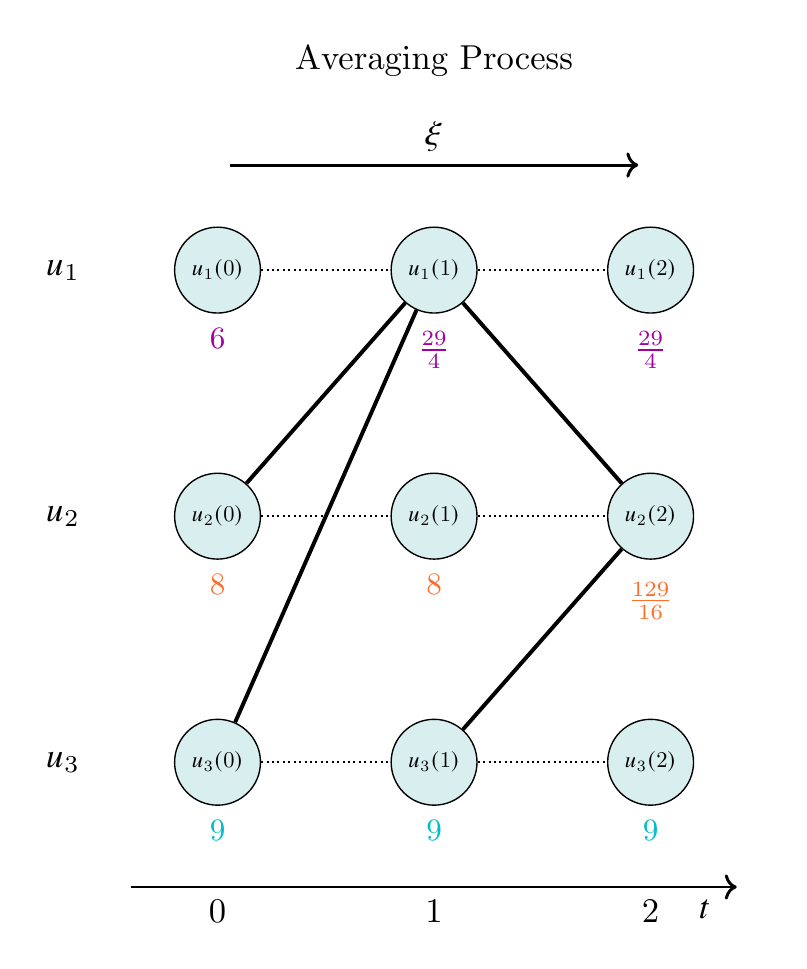}
    \caption*{(a)}
\end{minipage}
\begin{minipage}{.41\textwidth}
    \includegraphics[width=.8\textwidth]{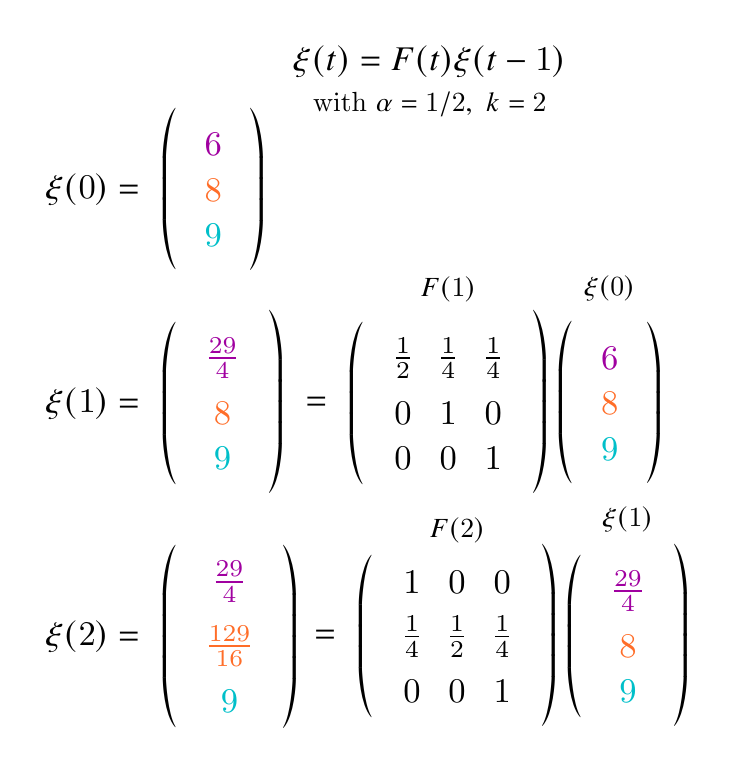}
\end{minipage}
\begin{minipage}{.41\textwidth}
    \includegraphics[width=.9\textwidth]{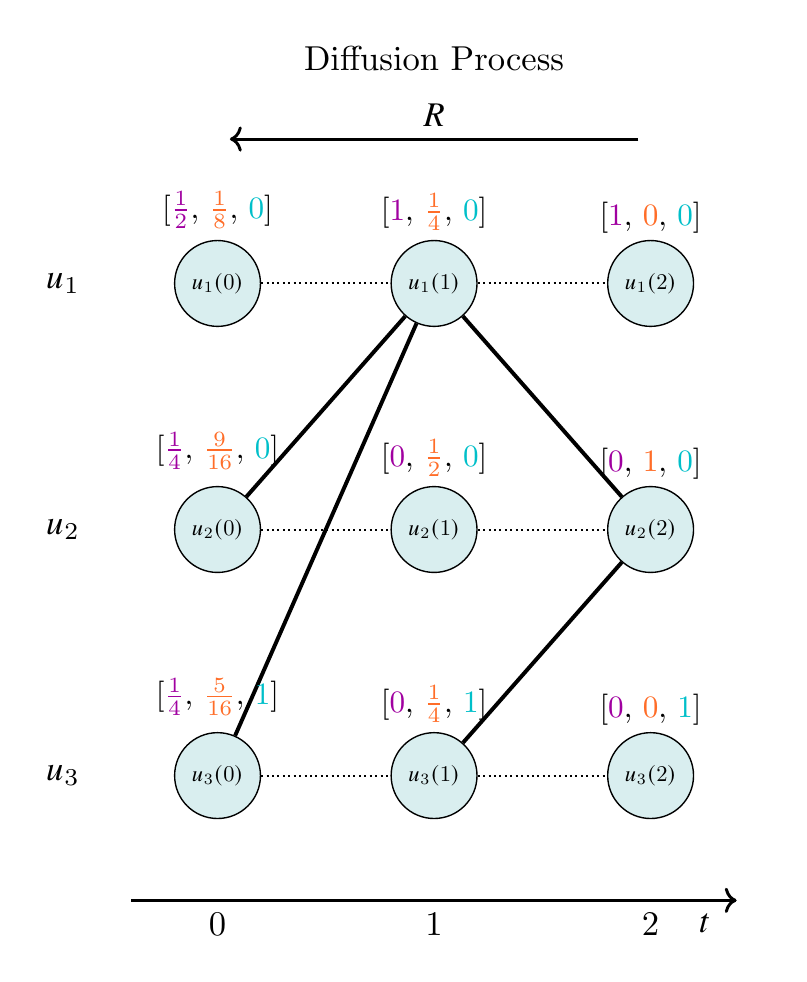}
    \caption*{(b)}
\end{minipage}
\begin{minipage}{.41\textwidth}
    \includegraphics[width=.9\textwidth]{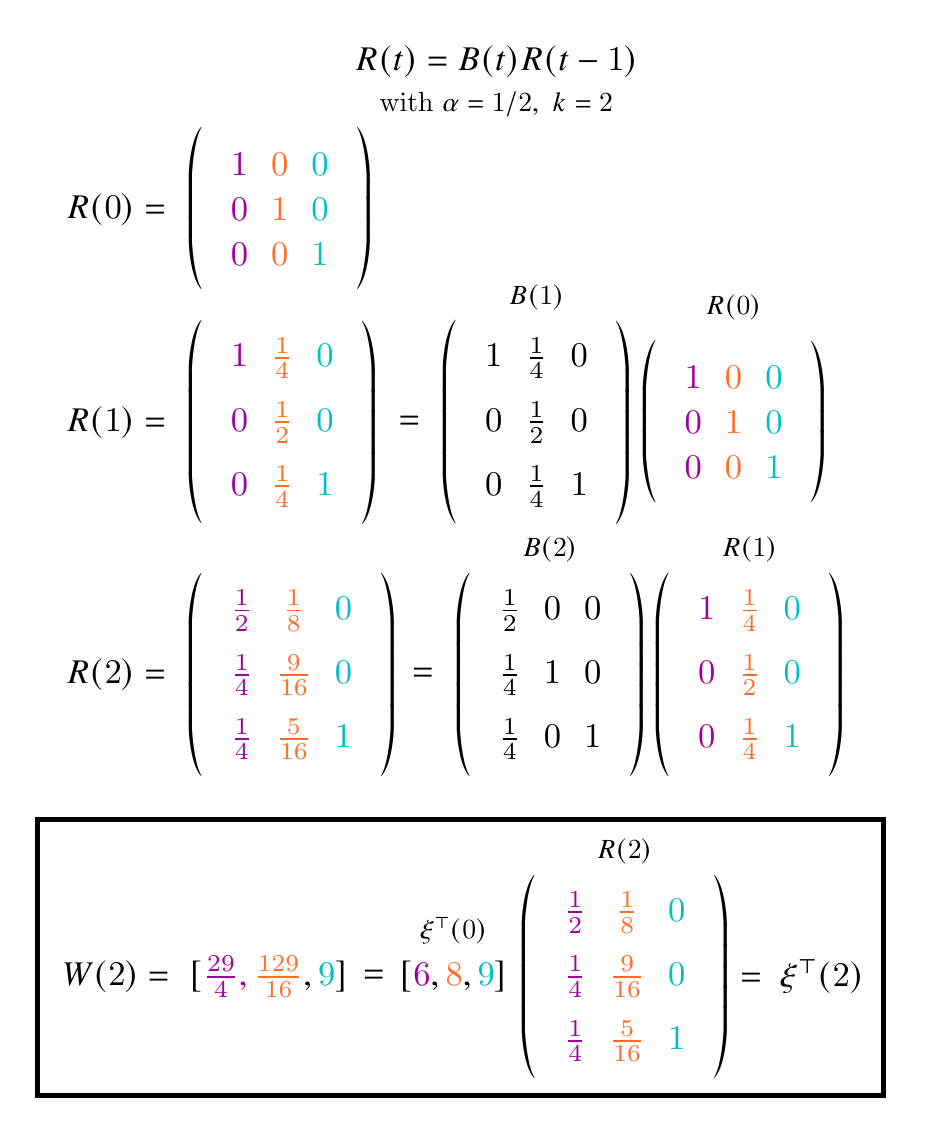}
\end{minipage}
\caption{Illustration of the duality with $k=2$ and $\alpha=1/2$.
In (a), at $t=1$ node $u_1$ is selected to update its value via averaging with $u_2$ and $u_3$. The values at $u_2$ and $u_3$ stay the same -- see matrix $F(1)$. At $t=2$, $u_2$ is selected to average with $u_1$ and $u_3$, leading to $\xi(2)$
-- see matrix $F(2)$.
In (b), the initial state, at $t=2$, of the diffusion starting in $u_2$ is the vector $[{\color{csecond}0,1,0}]$. 
At the first step ($t=2$), $u_2$ sends 
$1-\alpha = 1/2$ of its load to $u_1$ and $u_3$. 
The loads in the other nodes do not spread. The resulting load vector is $R_2(1) = [{\color{csecond}1/4, 1/2, 1/4}]$, the second column of $R(1)$.
After the second step, the load is $R_2(2) = [{\color{csecond}1/8, 9/16, 5/16}]$. The diffusion of the loads originating 
at nodes $u_1$ and $u_3$ is indicated in {\color{cfirst}\cfirst} and {\color{cthird}\cthird} respectively.
We get $W(2) = \xi^\top(2)$.}
\label{fig:couplingk2}
\end{figure}






\end{document}


\section*{TODO list}
Feel free to add and complete items.
\begin{enumerate}
    \item \Cnote{\checkmark} Change to British spelling -ize to -ise and neighbour(s) mostly
    \item \Cnote{\checkmark} Change P and Q into F and B in section in Th 4.2 and Lem 4.4
    \item \Cnote{\checkmark} Check that all the brackets are the right size in formulas
    \item \Cnote{\checkmark} Move the definition of global time to a suitable position 
    \item Go through the comments in the document
    \item \Dnote{\checkmark} Check and fix caption of Fig 1
    \item \Cnote{\checkmark} Check for mentions of $k$ outside of Prop 4.1 and remove it
    \item \Cnote{\checkmark} Check for consistency in the language: value/load/opinion, node/vertex (something else?)
    \item \Cnote{\checkmark} For the Q chain: did we settle on $x,y,x', y'$ or $x,y,u,v$?
\end{enumerate}

\section{OLD - Introduction}
 Let $G$ be a connected $n$-vertex graph with $m$ edges.
nodes

A simple algorithm in the Population Protocol model picks a random edge at each step and updates the values of the endpoint nodes to their average. The original average is preserved deterministically, but requires both nodes to update at the same time.

We analyse two related averaging algorithms in a distributed setting where nodes update unilaterally based on the observed values of their neighbours.

\begin{itemize}
\item Algorithm I - \emph{K-Avg Algorithm (KAA)}. A node $u$ is chosen uniformly at random and random edge $(u,v)$ incident with this node selected. Node $u$ updates its value to a weighted average of the values at the edge endpoints.

     A more general version of this algorithm consists $u$ of picking $k$ random neighbours for the update step.

\item Algorithm II - \emph{Unilateral Edge Balancing Algorithm (AEBA)}. An  edge $(u,v)$ is chosen uniformly at random, and a random endpoint of the edge updates its value to a weighted average of the values at the edge endpoints.
\end{itemize}

As $t$ increases let $X^*=(X^*_1,...,X^*_n)$ be the limiting (long run) value of $X(t)$. Let $Z(t)=\frac 1n \sum_{v \in V} X_v(t)$ and $M(t)=\sum_{v \in V} \pi_v X_v(t)$
be the unweighted  and degree weighted sample means respectively. If the graph is regular then $Z(t)=M(t)$ but otherwise not.

\








\section{OLD - Algorithm I - KAA}

\textbf{Model}: we consider a graph $G = (V,E)$. Each node $u\in V$ starts with a value $X_u(0)$. At each time step $t \geq 0$, one node $u$ is chosen at random, and then $u$ chooses a random neighbour $v$ according to
distribution $P(u,\cdot)$ 
(we view $P$ as 
the transition matrix of the random walk) 
and sets its value to $X_u(t+1) = \alpha X_u(t)+(1-\alpha)X_v(t)$.\\
\textbf{Main results}: For any regular graph the values of the averaging process are concentrated around the actual mean of the opinions after some time. This solves the issue of the concentration in the cycle as well as all regular graphs.

\subsection{Dual process: random walk of particles}


Fit $T>0$. For each vertex $u\in V$ consider $2^T$ particles that are located in $u$, labelled from $1$ to $2^T$. These particles will move over time. We denote by $\particle{i, u}$ the particle with label $i$ that started at $u$, and we denote $\loc{i, u}(t)$ the location of such particle at time $t$ (so $\loc{i, u}(0)$ is the initial location, i.e. $u$). 
Consider a bijection $B$ from $\{1,\ldots, 2^T\} \to \{0,1\}^T$ (any bijection is fine). 
Let vector $B_i = (B_i(0), B_i(1), \ldots, B_i(T-1))$ 
denote the image of $i$,
and let $|B_i|=\sum_{t=0}^{T-1} B_i(t)$.

Given a particle $p = \particle{i, u}$ 
we abuse notation and write $B_p$ for the vector $B_i$ associated with the label of the particle. 
These vectors are used by the particles to walk 
through the graph according to
the following random process.
\begin{enumerate}
\item At each time step $t\in \{0,\ldots, T-1\}$ choose a random node $u$ uniformly at random. In the future, I will refer to this node as the pusher.
\item Choose a random node $v$ according to the
distribution $P(u,.)$, 
where $P$ is the transition matrix of the averaging 
process.
\item All particles $p$ located in 
$u$ at time $t$ move to $v$, if $B_p(t) = 1$, otherwise they stay put at $u$. 
i.e. $\loc{p}(t+1) = v$ if $B_P=1$, otherwise,
if $B_p(t) = 0$,
$\loc{p}(t+1) = u$. 
All other particles (the ones located in other nodes) stay put.
\end{enumerate}

\textbf{Summary:}

\begin{itemize}
\item Put $2^T$ particles at each node
(the starting point for these particles).
\item Associate with each of the $2^T$ particles 
which start 
at the same node a unique vector from 
$\{0,1\}^T$.
\item In each step $0 \le t < T$,
\begin{itemize}
\item choose a random pair of nodes, selecting
a pair $(x,y)$ with probability 
$\frac{1}{n}P(x,y)$,
\item move all particles $p$ with $B_p(t) = 1$ 
from $x$ to $y$, all other particles stay put.
\end{itemize}
\end{itemize}

\section{OLD - Proof of Theorem~\ref{thm:duality}}\label{sec:proofDuality}

We will write a more specific version of the dual process.

Let's recall a bit the dual process. Given a fixed $T\geq 0$, recall that in node we started $2^T$ particles, and so in total we have a set of $n2^T$ of dependent random walks. We denote by $D_T$ the whole dual process, i.e.  
$$D_T = (X_i^x(t): i\in \{1,\ldots, 2^T\}, x\in V,t\in \{0,\ldots, T\}\})$$. Recall the dynamic of $D_T$: we choose a pair $(x,y)\in V^2$ with probability $P(x,y)/n$ and we move all particles in $x$ to $y$. $X_{i}^x(t)$ denotes the location at time $t$ of the particle, and $X_{i}^x(0)$ denotes the initial location, i.e. $x$. Recall as well that if a particle has label $i$, it is uniquely associated with a vector $B_i \in \{0,1\}^T$, we denote by $B_i^t$ the first $t$ elements of $B_i$, and $|B_i^t| = \sum_{j=0}^{t-1} B_i(j)$, and let $|B_i^0|=0$. Recall that for $i\neq j$ we have that $B_i\neq B_j$, so each label $i$ receives a different $B_i \in \{0,1\}^T$.

For $t \in \{0,1,\ldots, T\}$, define
$$W_{t}^T(x) = 2^{t-T}\sum_{i=1}^{2^T} \alpha^{|t-B_i^{t}|}(1-\alpha)^{|B_i^{t}|} \xi_{T-t}(X_i^x(t)),$$
which generalise $W_T(x)$ defined in Equation~\eqref{eqn:defWTx} (we just need to set $t = T$). The following theorem is a generalisation of Theorem~\ref{thm:duality}.

\begin{theorem}
For each $t \in \{0,\ldots, T\}$ we have that
$$W_t^T = (W_t^T(x))_{x\in V},$$
have the same distribution for each $t$, and such distribution is equal to the distribution of $\xi_T$.
\end{theorem}

\begin{proof}
We proof is by induction. The case $t=0$ is trivial, since $W_0^T(x) = 2^{-T}\sum_{i=1}^{2^T} \xi_T(X_i^x(0))$ and since $X_i^x(0)=x$. Suppose the result holds for $t\in \{1,\ldots, T-1\}$, we will prove the result for $t+1$.

Given $\xi \in \R^V$, and $x,y\in V$,  let $I_{xy}\xi \in \R^V$ the result of averaging node $x$ with $y$, i.e. if $\xi' = I_{xy}\xi$, then $\xi'(x) = \alpha \xi(x)+(1-\alpha)\xi(y)$ and $\xi'(z) = \xi(z)$ for $z\neq x$. 

We proceed to verify that $W_{t+1}^T$ have the same distribution that $W_t^T$ by constructing an explicit coupling. This will prove the result since, by induction hypothesis, we know that $W_t^T$ has the same distribution than $\xi_T$.

Now we couple the processes $W_t^T$ and $\xi_t$ in such a way, that at time $T-(t+1)$, the process $(\xi_s)_{s\geq 0}$ evolved by choosing the random pair $(X,Y)\in V^2$ to average, i.e. $\xi_{T-t}= I_{XY}\xi_{T-(t+1)}$ where $X$ and $Y$ are random nodes chosen with probability $\Prob(X=z,Y=w)=\frac{1}{n}P(z,w)$, and in the dual process $D_T$, at time $t$ we move all particles located in $X$ and with $B_i(t)=0$ to $Y$ (such change is observed at time $t+1$).

Then
\begin{align*}
\sum_{z\in V}W_{t}^T(z) = \sum_{z\in V}\sum_{i=1}^{2^T} \alpha^{t-|B_i^{t}|}(1-\alpha)^{|B_i^{t}|} (I_{XY}\xi_{T-(t+1)})(X_i^z(t)).
\end{align*}

We have two cases. First, suppose that if $X_i^z(t) = X$. Then, in one hand,
$$I_{XY}\xi_{T-(t+1)}(X_i^z(t)) = \alpha\xi_{T-(t+1)}(X)+(1-\alpha)\xi_{T-(t+1)}(Y)$$
on the other hand, by our coupling, we know that $X_i^z(t) = X$ if $B_i(t) = 0$ and $X_i^z(t) = Y$ if $B_i(t)=1$.

In the second case, we have $X_i^z(t)\neq X$. Here we have that $$I_{XY}\xi_{T-(t+1)}(X_i^z(t)) = \xi_{T-(t+1)}(X_i^z(t))$$
and by our coupling we also know that $X_i^z(t+1)=X_i^z(t)$.

We conclude that $\sum_{z\in V}W_{t}^T(z) = \phi+\psi$ where
\begin{align*}
\phi = 2^{t-T}\sum_{z\in V}\sum_{i=1}^{2^T} \alpha^{t-1-|B_i^{t}|}(1-\alpha)^{|B_i^{t}|} \left(\alpha(\xi_{T-(t+1)})(X)+(1-\alpha)\xi_{T-(t+1)}(Y))\ind{X_i^z(t) = X}\right)
\end{align*}
and 
\begin{align*}
\psi = 2^{t-T}\sum_{z\in V}\sum_{i=1}^{2^T} \alpha^{t-1-|B_i^{t}|}(1-\alpha)^{|B_i^{t}|} \xi_{T-(t+1)}(X_i^{z}(t+1))\ind{X_i^z(t) \neq X}
\end{align*}

To analyse the terms $\phi$ and $\psi$, we note that if $B_i(s) = B_j(s)$ for $s \in \{0,\ldots, t-1\}$ then $X_i^z(t) = X_j^z(t)$, so the number of particles that started from $z$ and are located in $X$ at time $t$ is even, in particular, if $X_i^z(t) = X_j^z(t) = X$, then half of them move to $Y$ and half stay in $X$.

Now, for the term $\phi$ we have
\begin{align*}
\phi &= 2^{t-T}\sum_{z\in V}\sum_{i=1}^{2^T} \alpha^{t-1-|B_i^{t}|}(1-\alpha)^{|B_i^{t}|} \left(\alpha\xi_{T-(t+1)}(X)+(1-\alpha)\xi_{T-(t+1)}(Y)\right)\ind{X_i^z(t) = X}\\
&=2^{t+1-T}\sum_{z\in V}\sum_{i=1}^{2^T} \alpha^{t-1-|B_i^{t}|}(1-\alpha)^{|B_i^{t}|} \left(\alpha \xi_{T-(t+1)}(X)\ind{X_i^z(t) = X, B_i(t)=0}+(1-\alpha)\xi_{T-(t+1)}(Y)\ind{X_i^z(t) = X, B_i(t)=1}\right)\nonumber\\
&=2^{t+1-T}\sum_{z\in V}\sum_{i=1}^{2^T} \alpha^{t-|B_i^{t+1}|}(1-\alpha)^{|B_i^{t+1}|} \xi_{T-(t+1)}(X_i^z(t+1))\ind{X_i^z(t) = X}
\end{align*}

and by the same argument

\begin{align*}
\psi &= 2^{t-T}\sum_{z\in V}\sum_{i=1}^{2^T} \alpha^{t-1-|B_i^{t}|}(1-\alpha)^{|B_i^{t}|} \xi_{T-(t+1)}(X_i^{z}(t+1))\ind{X_i^z(t) \neq X}\\
&=2^{t-T}\sum_{z\in V}\sum_{i=1}^{2^T} \alpha^{t-1-|B_i^{t}|}(1-\alpha)^{|B_i^{t}|}\left(\alpha\xi_{T-(t+1)}(X_i^{z}(t+1))+(1-\alpha)\xi_{T-(t+1)}(X_i^{z}(t+1))\right)\ind{X_i^z(t) \neq X}\\
&=2^{t+1-T}\sum_{z\in V}\sum_{i=1}^{2^T} \alpha^{t-1-|B_i^{t}|}(1-\alpha)^{|B_i^{t}|}\left(\alpha\xi_{T-(t+1)}(X_i^{z}(t+1))\ind{X_i^z(t) \neq X, B_i(t)=0}+(1-\alpha)\xi_{T-(t+1)}(X_i^{z}(t+1))\ind{X_i^z(t) \neq X, B_i(t)=1}\right)\\
&= 2^{t+1-T}\sum_{z\in V}\sum_{i=1}^{2^T} \alpha^{t-|B_i^{t+1}|}(1-\alpha)^{|B_i^{t+1}|} \xi_{T-(t+1)}(X_i^z(t+1))\ind{X_i^z(t) \neq X}
 \end{align*}
hence 
$W_t^T = \phi+\psi = W_{t+1}^T$ under the coupling. This shows that $W_t^T$ have the same distribution as $W_{t+1}^T$, and by induction we have proved our result.
\end{proof}

\section{OLD - Some irregular graph Examples}

\subsection{The star}

Consider a star on $n+1$ nodes, the central one and $n$ leaves. Denote by $c$ the central node, and let $x$ and $y$ be two different leaves of the star, then when $\alpha = 1/2$ we have

\begin{align}
    \mu(c,c) = \frac{3+o(1)}{10}, \mu(x,x) = \frac{1+o(1)}{10n}, \mu(x,y) = \frac{1+o(1)}{5n^2}, \mu(c,x) = \frac{1+o(1)}{5n} 
\end{align}
the $o(1)$ terms depends on $n$, i.e. for large $n$ they are negligible. (This can be checked directly in the definition of stationary distribution).

Suppose we consider a value of $2n$ on a leave, then the expected final average is $1$, however the variance is $\Theta(n)$

\section{OLD - Edge Model}
Does is generalise to large k?\\
Does it make the concentration proof easier?

\fnote{I westernized the entry in comments: 
Sorry couldn't get it fixed}

\end{document}